\newcounter{cA}\newcounter{cB}
\newlength{\lA}\newlength{\lB}\newlength{\lC}
\newtheorem{theorem}{Theorem}
\newtheorem{lemma}{Lemma}
\newtheorem{corollary}{Corollary}
\newtheorem{proposition}{Proposition}
\newtheorem{problem}{Problem}
\newtheorem{assumption}{Assumption}
 \newcommand{\II}[1]{\ensuremath{\mathbf{1}_{#1}}}
 \newcommand{\Iset}[2]{\ensuremath{\mathbf{1}_{#2}(#1)}}
\newcommand{\Halmos}{}
\newcommand{\fm}[1]{\scriptsize\mbox{\ensuremath{#1}}}
\newcommand{\ie}{{i.e.},\ }
\newcommand{\eg}{{e.g.},\ }
\newcommand{\cf}{{cf.} }
\newcommand{\NN}{\mathbb{N}}
\newcommand{\RR}{\mathbb{R}}
\renewcommand{\SS}{\mathcal{S}}
\newcommand{\FF}{\mathcal{F}}
\renewcommand{\k}{k}
\renewcommand{\t}{t}
\newcommand{\B}{N}
\newcommand{\N}{N}
\renewcommand{\S}{S}
\newcommand{\C}{C}
\newcommand{\Cset}{\mathcal{C}}
\renewcommand{\SS}{S}
\newcommand{\T}{T}
\renewcommand{\o}{0}
\newcommand{\mei}{\text{-}e_i}
\newcommand{\mea}{\text{-}e_1}
\newcommand{\meb}{\text{-}e_2}
\newcommand{\mda}{\text{-}d_1}
\newcommand{\mdb}{\text{-}d_2}
\newcommand{\na}{n_1}
\newcommand{\nb}{n_2}
\renewcommand{\ni}{n_i}
\newcommand{\ra}{r_1}
\newcommand{\rb}{r_2}
\newcommand{\ri}{r_i}
\newcommand{\ppp}{p_{4,d_1}}
\newcommand{\ppo}{p_{4,e_1}}
\newcommand{\ppm}{p_{4,d_2}}
\newcommand{\pmp}{p_{4,\text{-}d_2}}
\newcommand{\pmo}{p_{4,\text{-}e_1}}
\newcommand{\pmm}{p_{4,\text{-}d_1}}
\newcommand{\pop}{p_{4,e_2}}
\newcommand{\poo}{p_{4,\o}}
\newcommand{\pom}{p_{4,\text{-}e_2}}
\newcommand{\hpp}{p_{1,d_1}}
\newcommand{\hpo}{p_{1,e_1}}
\newcommand{\hmp}{p_{1,\text{-}d_2}}
\newcommand{\hmo}{p_{1,\text{-}e_1}}
\newcommand{\hop}{p_{1,e_2}}
\newcommand{\hoo}{p_{1,\o}}
\newcommand{\vpp}{p_{2,d_1}}
\newcommand{\vpo}{p_{2,e_1}}
\newcommand{\vpm}{p_{2,d_2}}
\newcommand{\vop}{p_{2,e_2}}
\newcommand{\voo}{p_{2,\o}}
\newcommand{\vom}{p_{2,\text{-}e_2}}
\newcommand{\rpp}{p_{3,d_1}}
\newcommand{\rpo}{p_{3,e_1}}
\newcommand{\rop}{p_{3,e_2}}
\newcommand{\roo}{p_{3,\o}}
\begin{document}

\title{A Linear Programming Approach to Error Bounds for Random Walks in the Quarter-plane}

\author[1,2]{Jasper Goseling}
\author[1]{Richard J. Boucherie}
\author[1]{Jan-Kees van Ommeren}
\affil[1]{Stochastic Operations Research, University of Twente, The Netherlands}
\affil[2]{Department of Intelligent Systems, Delft University of Technology, The Netherlands}
\affil[ ]{\small\texttt{\{j.goseling,r.j.boucherie,j.c.w.vanommeren\}@utwente.nl}}

\maketitle

\begin{abstract}
{We consider the approximation of the performance of random walks in the quarter-plane. The approximation is in terms of a random walk with a product-form stationary distribution, which is obtained by perturbing the transition probabilities along the boundaries of the state space. A Markov reward approach is used to bound the approximation error. The main contribution of the work is the formulation of a linear program that provides the approximation error.}
\end{abstract}

%
%
%
\section{Introduction} \label{sec:intro}
We consider random walks in the quarter-plane, \ie discrete-time Markov processes on state space $\S=\{0,1,\dots\}^2$. The random walks are homogeneous in the sense that within the interior of the state space, $\{1,2,\dots\}^2$, the transition probabilities are translation invariant. In both axes and in the origin of the state space --- \ie in $\{1,2,\dots\}\times\{0\}$, $\{0\}\times\{1,2,\dots\}$ and $\{(0,0)\}$ --- the transition probabilities are possibly distinct, but again translation invariant. Our interest is in steady-state behavior. More precisely, for a random walk $R$ with stationary distribution $\pi:\S\to[0,\infty)$, our interest is in
 $\FF = \sum_{n\in\S}F(n)\pi(n)$,
for some performance measure $F:\S\to[0,\infty)$. In particular, our interest is in characterizing the performance of the random walk by finding upper and lower bounds on $\FF$. 

Our approach to bounding the performance is based on two observations. The first observation is that closed form results for $\FF$ are readily obtained for the case that the stationary distribution $\pi$ is known to have a geometric product form. The second observation is that by carefully perturbing the transition probabilities of $R$ one obtains a random walk $\bar R$ for which the stationary distribution $\bar\pi$ has a geometric product form. Hence, the performance $\bar\FF$ of $\bar R$ is known in closed form. The basic idea of our approach is to bound the performance of $R$ in terms of $\bar\FF$. The main contribution of the current work is to show that $|\FF - \bar\FF|$ can be bounded by the solution of a linear program. In particular, we construct such a linear program in which the transition probabilities of $R$ and $\bar R$, the stationary distribution $\bar \pi$, and the performance measure $F$ are the only input parameters. Hence, this linear program is universal, in the sense that it can be used to obtain a bound on $|\FF-\bar\FF|$ without any additional preprocessing.

The current work builds on the Markov reward approach to error bounds as introduced by van Dijk and Puterman~\cite{van1988simple}. The method has since been further developed by van Dijk~\cite{vandijk88perturb, vandijk1998anor} and has been applied to, for instance, Erlang loss networks~\cite{boucherie2009monotonicity}, to tandem networks with finite buffers~\cite{vandijk88tandem}, to networks with breakdowns~\cite{van1988simple}, to queueing networks with non-exponential service~\cite{vandijk2004nonexp} and to wireless communication networks with network coding~\cite{goseling13peva}. An extensive description and overview of various applications of this method can be found in~\cite{vandijk11inbook}. The error bounding method provides a framework for establishing bounds on $|\FF - \bar\FF|$. Starting from the observation that $\FF$ can be interpreted as the average reward over an infinite time horizon in a Markov reward process, van Dijk formulates a bound on $|\FF - \bar\FF|$ in terms of bounds of the \emph{bias terms} (a.k.a.\ relative gains) of this Markov reward process. In addition to bounding the bias terms, the method is based on allowing a different reward function $\bar F$ on the perturbed process.

A major disadvantage of the error bound method is that the verification steps that are required in application of the method can be technically quite complicated. Indeed, no generic verification procedure is available in the literature and existing results depend on case by case verification by means of cumbersome induction proofs. The \emph{main contribution} of the current work consists of developing such a verification technique for random walks in the quarter-plane. The verification technique is based on formulating the application of the error bound method as a linear program. In doing so, it avoids the induction proofs completely. Moreover, if error bounds exist, the optimization framework will inherently lead to the best possible error bounds. Finally, the method uses piecewise linear functions to obtain bounds. It will be illustrated that the error bounds that are obtained based on piecewise linear functions would most likely not have been found with the approaches to error bounds that have so far been used in the literature.   

Our method depends on perturbing some of transition rates in order to get a product-form stationary distribution. It was shown in~\cite{bayer2002structure} that for continuous-time Markov processes in the quarter plane, such perturbations can always be found. A related result was presented in~\cite{latouche2013level,kroese2004spectral} for a (discrete-time) QBD processes that satisfy a technical condition. In~\cite{chen14paper3} the existence of such perturbations is demonstrated for all random walks in the quarter-plane. In the current work our concern is not with constructing the perturbed process. We assume that two processes are given and establish a bound on the difference in performance.

Another means of establising a relation between $\FF$ and the performance of the perturbed random walk $\bar R$ is through stochastic comparison~\cite{muller2002comparison}. The advantage of the error bound method over stochastic comparison is that it not only provides a comparison result on two systems, but also quantifies the performance difference between the two. In addition, the error bound method is able to provide results in cases that stochastic comparison results do not exist, see, for instance,~\cite{taylor1998strong}. 

While it is possible to obtain closed form expressions for $\FF$ in special cases, \eg for random walks with a product-form stationary distribution, no methods exist that provide such results for arbitrary random walks. There are some methods to find expressions for the generating functions of $\pi$, \cf~\cite{fayolle, cohenboxma}. However, these expressions can, in general, not be used for a straightforward calculation of $\FF$. In addition, these methods can not be straightforwardly applied. More precisely, they require a careful analysis of the the model and an adjustment of the method based on, \eg the transition probabilities.

Linear programming has been introduced by Kumar and Kumar~\cite{kumar1994performance} for bounding the performance of multiclass queueing networks. The goal is to establish performance bounds that hold for any stable scheduling policy. The method, which was generalized by Bertsimas et al. in~\cite{bertsimas1994optimization} and by Morrison and Kumar in~\cite{morrison1999new} relies on approximating the underlying average-cost Markov decision process. It was shown by de Farias and Van Roy~\cite{de2003linear,de2006cost} how this method  fits into a general linear programming approach to approximate dynamic programming. Another means of approximating the behavior of a random walk is to analyze the tail asymptotics. An overview of such methods is given in~\cite{miyazawa2009tail}. The most important difference between~\cite{kumar1994performance, bertsimas1994optimization, morrison1999new,miyazawa2009tail} is that in the current work we provide a bound on the performance difference of two processes with fixed policies.

The remainder of this paper is organized as follows. In Section~\ref{sec:prelim} we provide an exact statement of our model and the problem formulation. In Section~\ref{sec:motivation} we provide an introduction to the Markov reward approach to error bounds as well as an example that motivates our goal of developing a linear programming framework for obtaining error bounds. The linear programming approach to the error bound method is developed in Section~\ref{sec:result} for the case that the transition probabilities of $R$ and $\bar R$ differ only for transitions along the unit directions. An extension of the method to the general case, as well as some variations of the method are presented in Section~\ref{sec:generalize}. Examples that illustrate application of the method are given in Section~\ref{sec:example}. Finally, in Section~\ref{sec:disc} we provide a discussion of the current work and an outlook on future work.

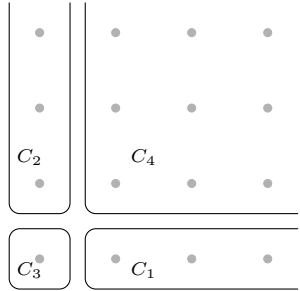
\begin{figure}
\centering
\begin{tikzpicture}[scale=1]
\setcounter{cA}{3}
\setcounter{cB}{3}
\setlength{\lA}{4mm}
\setlength{\lB}{3cm}
\setlength{\lC}{1.5mm}
\tikzstyle{vertex}=[draw,circle,fill,color=black!30!white,outer sep=0pt, inner sep=0pt, minimum size=3pt]
\tikzstyle{part}=[rounded corners]
\tikzstyle{partlabel}=[anchor=west,xshift=-\lA+1mm,yshift=-\lC,inner sep=0pt, outer sep=0pt]
\foreach \x in {0,...,\thecA}
 	\foreach \y in {0,...,\thecB}
 		\node[vertex] (x\x y\y) at (\x,\y) {}; 
\draw[part] (-\lA,-\lA) rectangle (\lA,\lA);
\draw[part] (\lA+\lB,-\lA) -- (-\lA+1cm,-\lA) -- (-\lA+1cm,\lA) -- (\lA+\lB,\lA);
\draw[part] (-\lA,\lA+\lB) -- (-\lA,-\lA+1cm) -- (\lA,-\lA+1cm) -- (\lA,\lA+\lB);
\draw[part] (\lA+\lB,-\lA+1cm) -- (-\lA+1cm,-\lA+1cm) -- (-\lA+1cm,\lA+\lB);
\node[partlabel] at (0,0) {\fm{\C_3}};
\node[partlabel] at (1.5,0) {\fm{\C_1}};
\node[partlabel] at (0,1.5) {\fm{\C_2}};
\node[partlabel] at (1.5,1.5) {\fm{\C_4}};
 \end{tikzpicture}

\caption{Partition of state space $\S$ into components $\C_1,\dots,\C_4$.}
 \label{fig:part}
\end{figure}

\begin{figure}
\centering
\beginpgfgraphicnamed{pgftrans}
\begin{tikzpicture}[scale=.8]
\tikzstyle{axes}=[very thin]
\tikzstyle{trans}=[very thick,-latex]
\tikzstyle{intloop}=[->,to path={
    .. controls +(30:3.5) and +(-30:3.5) .. (\tikztotarget) \tikztonodes}]
\tikzstyle{hloop}=[-latex,to path={
    .. controls +(-60:.6) and +(-120:.6) .. (\tikztotarget) \tikztonodes}]
\tikzstyle{vloop}=[->,to path={
    .. controls +(-150:.6) and +(-210:.6) .. (\tikztotarget) \tikztonodes}]
\tikzstyle{oloop}=[->,to path={
    .. controls +(255:.6) and +(195:.6) .. (\tikztotarget) \tikztonodes}]
\draw[axes] (0,0)  -- node[at end, below] {$\scriptstyle \rightarrow \na$} (8,0);
\draw[axes] (0,0) -- node[at end, left] (upa) {$\scriptstyle \nb$} (0,6.5);
\node[anchor=north,below=-1pt of upa,inner sep=0pt] {$\scriptstyle {\uparrow}$} ;
\draw[trans] (0,0) to node[below right,at end] {\fm{\rpo}} (1,0);
\draw[trans] (0,0) to node [at end, anchor=south west] {\fm{\rpp}} (1,1);
\draw[trans] (0,0) to node[above left, at end] {\fm{\rop}} (0,1);
\draw[trans] (5,0) to node[at end, below left] {\fm{\hmo}} (4,0);
\draw[trans] (5,0) to node[at end, below right] {\fm{\hpo}} (6,0);
\draw[trans] (5,0) to node[at end, anchor = south west] {\fm{\hpp}} (6,1);
\draw[trans] (5,0) to node[at end, anchor = south]  {\fm{\hop}} (5,1);
\draw[trans] (5,0) to node[at end, anchor = south east] {\fm{\hmp}} (4,1);
\draw[trans] (0,4) to node[below left, at end] {\fm{\vom}} (0,3);
\draw[trans] (0,4) to node[at end, anchor = west] {\fm{\vpo}} (1,4);
\draw[trans] (0,4) to node[above left, at end] {\fm{\vop}} (0,5);
\draw[trans] (0,4) to node[at end, anchor = north west] {\fm{\vpm}} (1,3);
\draw[trans] (0,4) to node[at end, anchor = south west] {\fm{\vpp}} (1,5);
\draw[trans] (5,4) to node[at end, anchor = west] {\fm{\ppo}} (6,4);
\draw[trans] (5,4) to node[at end, anchor = south west] {\fm{\ppp}} (6,5) ;
\draw[trans] (5,4) to node[at end, anchor = south] {\fm{\pop}} (5,5);
\draw[trans] (5,4) to node[at end, anchor = south east] {\fm{\pmp}} (4,5);
\draw[trans] (5,4) to node[at end, anchor = east] {\fm{\pmo}} (4,4);
\draw[trans] (5,4) to node[at end, anchor = north east] {\fm{\pmm}} (4,3);
\draw[trans] (5,4) to node[at end, anchor = north] {\fm{\pom}} (5,3);
\draw[trans] (5,4) to node[at end, anchor = north west] {\fm{\ppm}} (6,3);
\draw[very thick,-latex] (0,0) to[oloop] node[below] {\fm{\roo}} (0,0);
\draw[very thick,-latex] (5,0) to[hloop] node[below=.8mm] {\fm{\hoo}} (5,0);
\draw[very thick,-latex] (5,4) to[intloop] node[right] {\fm{\poo}} (5,4);
\draw[very thick,-latex] (0,4) to[vloop] node[left] {\fm{\voo}} (0,4);
\end{tikzpicture}
\endpgfgraphicnamed
\caption{Transition probabilities for random walk $R$.}
\label{fig:trans}
\end{figure}
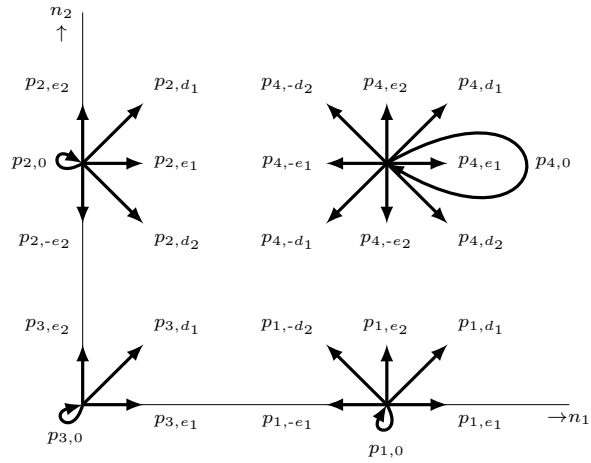

%
%
%
\section{Preliminaries} \label{sec:prelim}

\subsection{Model} \label{ssec:model}
We consider two random walks $R$ and $\bar R$, the state space of  which is the quarter plane, denoted by $\S$, \ie $\S = \{0,1,\dots\}\times\{0,1,\dots\}$. 
A state is represented by a pair of coordinates, \ie for $n\in\S$, $n=(\na,\nb)$.

We consider a partition of $\S$ into four components: $\C_1=\{1,2,\dots\}\times\{0\}$, $\C_2=\{0\}\times\{1,2,\dots\}$, $\C_3=\{(0,0)\}$ and $\C_4=\{1,2,\dots\}\times\{1,2,\dots\}$. We refer to these components as the horizontal axis, the vertical axis, the origin and the interior respectively. Let $k(n)$ denote the index of the component of state $n\in\S$, \ie $n\in\C_{\k(n)}$.

 We denote by $\B_k$ the neighbors of a state in $\C_k$. More precisely $\B_1=\{-1,0,1\}\times\{0,1\}$, $\B_2=\{0,1\}\times\{-1,0,1\}$, $\B_3=\{0,1\}\times\{0,1\}$ and $\B_4=\{-1,0,1\}\times\{-1,0,1\}$. Also, let $\B=\B_4$. For notational convenience we let $e_1=(1,0)$, $e_2=(0,1)$, $d_1=(1,1)$ and $d_2=(1,-1)$.

The random walks are discrete-time Markov processes, the transition probabilities of which are homogeneous in the sense that they are translation invariant in each of the components. Transitions are to neighbors only.  Let $p_{k,u}$ denote the probability of $R$ jumping from any state $n$ in component $\C_k$ to $n+u$, where $u\in\B_k$. Let $\bar p_{k,u}$ denote the corresponding probabillity for $\bar R$. For notational convenience let
\begin{equation}
q_{k,u} = \bar p_{k,u} - p_{k,u}.
\end{equation}
The partition into components and notation for transition probabilities are illustrated in Figures~\ref{fig:part} and~\ref{fig:trans}, respectively.

The stationary distributions of $R$ and $\bar R$, denoted by $\pi$ and $\bar\pi$, are the probability distributions that satisfy for all $n\in\S$,
\begin{equation*}
\pi(n) = \sum_{u\in N_{k(n)}}p_{k(n+u),\text{-}u}\pi(n+u) \quad\text{and}\quad \bar\pi(n) = \sum_{u\in N_{k(n)}}\bar p_{k(n+u),\text{-}u}\bar\pi(n+u),
\end{equation*}
respectively. We assume that $\bar\pi$ is a product-form geometric distribution, \ie that
\begin{equation} \label{eq:barpi}
\bar\pi(n) = \prod_{i=1,2}(1-\ri)\ri^{\ni}, 
\end{equation}
for some $r\in(0,1)\times(0,1)$ that is known. The stationary distribution $\pi$ is assumed to be unknown. 

\subsection{Problem statement} \label{ssec:problem}

Our goal is to establish upper and lower bounds on the steady-state performance of $R$ in terms of $\bar R$ and $\bar \pi$. The performance measure of interest is
\begin{equation}
\FF = \sum_{n\in\S} \pi(n)F(n),
\end{equation}
where $F:\S\to[0,\infty)$ is a function that is linear in each of the components of the state space, \ie
\begin{equation} \label{eq:clinear}
F(n) =
\begin{cases}
f_{1,0} + f_{1,1}\na, \quad &\text{if } n\in\C_1,\\
f_{2,0} + f_{2,2}\nb, \quad &\text{if } n\in\C_2,\\
f_{3,0}, \quad &\text{if } n\in\C_3,\\
f_{4,0} + f_{4,1}\na + f_{4,2}\nb, \quad &\text{if } n\in\C_4,
\end{cases}
\end{equation}
where $f_{k,i}$ are the constants that define the function. We refer to functions that are linear in each of the components of the state space as componentwise linear or as $\C$-linear. Let $\Cset$ denote the class of all $\C$-linear functions, $\Cset_+$ is the set of all non-negative $\C$-linear functions.

Finally, for $V\subset\B$ and $u\in\B$ let $V+u=\{w |\ w-u\in V\}$.

\subsection{Markov reward approach to error bounds} \label{ssec:errorbounds}
Our framework builds on the Markov reward approach for error bounds, an introduction to which is provided in~\cite{vandijk11inbook}. The gist of the approach is to interpret $f$ as a reward function, where $f(n)$ is the one-step reward if the random walk is in state $n$. We denote by $F^t(n)$ the expected cummulative reward at time $t$ if the random walk starts from state $n$ at time $0$, \ie
\begin{equation} \label{eq:dynF}
F^t(n) =
\begin{cases}
0,\quad &\text{if }t=0,\\
F(n) + \sum_{u\in \B_{k(n)}}p_{k(n),u}F^{t-1}(n+u),\quad &\text{if }t>0.
\end{cases}
\end{equation}
We will have particular interest terms of the form $D^t_u(n) = F^t(n+u) - F^t(n)$, which we refer to as {\em bias terms.} For the unit vectors, let $D_1^t(n)=D^t_{e_1}(n)$ and $D_2^t(n)=D^t_{e_2}(n)$.

The next results appears in, \eg~\cite{vandijk11inbook}, and provides a bound on the approximation error on $\FF$. We provide a presentation of the result for random walks in the quarter plane. A more general formulation of the result, applicable to arbitrary Markov chains, appears in~\cite{vandijk11inbook}. 
\begin{theorem}[\cite{vandijk11inbook}] \label{th:error}
Let $\bar F:\S\to [0,\infty)$ and $G:\S\to [0,\infty)$ satisfy
\begin{equation} \label{eq:constrtherror}
\Big| \bar F(n) - F(n) + \sum_{u\in\B_{k(n)}}q_{k(n),u}D_u^t(n) \Big| \leq G(n)
\end{equation}
for all $n\in\S$ and $t\geq 0$.
Then
\begin{equation*}
 \sum_{n\in\S}\left[\bar F(n) - G(n)\right]\bar \pi(n)\ \leq\ \FF\ \leq\ \sum_{n\in\S}\left[\bar F(n) + G(n)\right]\bar \pi(n).
\end{equation*}
\end{theorem}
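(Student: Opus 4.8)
The plan is to realize $\FF$ and $\bar\FF$ as long-run average rewards in two coupled Markov reward chains and to telescope the difference over a finite horizon, letting the horizon tend to infinity. First I would recall the standard fact that, under the stationarity of $\pi$, the performance $\FF = \sum_n \pi(n)F(n)$ equals the Cesàro limit $\lim_{t\to\infty} F^t(n)/t$ (independent of the starting state $n$), where $F^t$ is the expected cumulative reward defined in~\eqref{eq:dynF}; the analogous statement holds for $\bar R$ with reward function $\bar F$, giving $\bar\FF := \sum_n \bar\pi(n)\bar F(n) = \lim_{t\to\infty}\bar F^t(n)/t$. The key algebraic step is then to write, for any fixed $t$,
\begin{equation*}
\sum_{n\in\S}\bar\pi(n)\Big[\bar F^{t}(n) - \bar F^{t-1}(n)\Big]
= \sum_{n\in\S}\bar\pi(n)\bar F(n),
\end{equation*}
using that $\bar\pi$ is invariant for $\bar R$ so that applying the $\bar R$-transition operator inside the sum leaves $\bar\pi$ unchanged. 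This identifies the per-step increment of $\bar F^{t}$, averaged against $\bar\pi$, with $\bar\FF$ exactly.

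Next I would compare $F^t$ and $\bar F^t$. Averaging the one-step recursion for $F^t$ against $\bar\pi$ and again exploiting invariance of $\bar\pi$ under $\bar R$, one gets
\begin{equation*}
\sum_{n\in\S}\bar\pi(n)\Big[F^{t}(n) - F^{t-1}(n)\Big]
= \sum_{n\in\S}\bar\pi(n)\Big[F(n) + \sum_{u\in\B_{k(n)}}\big(p_{k(n),u}-\bar p_{k(n),u}\big)F^{t-1}(n+u)\Big],
\end{equation*}
after inserting and subtracting the $\bar R$-operator acting on $F^{t-1}$. Writing $p_{k,u}-\bar p_{k,u} = -q_{k,u}$ and using that $\sum_u q_{k,u} = 0$ (both $p$ and $\bar p$ are probability vectors on $\B_k$), the correction term collapses to $-\sum_{u}q_{k(n),u}D_u^{t-1}(n)$ with the bias terms $D_u^{t-1}(n)=F^{t-1}(n+u)-F^{t-1}(n)$. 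Combining this with the previous display yields
\begin{equation*}
\sum_{n\in\S}\bar\pi(n)\Big[\big(F^{t}(n)-F^{t-1}(n)\big) - \bar F(n)\Big]
= \sum_{n\in\S}\bar\pi(n)\Big[F(n) - \bar F(n) - \sum_{u\in\B_{k(n)}}q_{k(n),u}D_u^{t-1}(n)\Big],
\end{equation*}
and hypothesis~\eqref{eq:constrtherror} bounds the right-hand side in absolute value by $\sum_n \bar\pi(n)G(n)$ for every $t\geq 1$. Summing over $t=1,\dots,T$, dividing by $T$, and telescoping the left side gives
\begin{equation*}
\Big|\ \frac{1}{T}\sum_{n\in\S}\bar\pi(n)F^{T}(n) - \bar\FF\ \Big| \leq \sum_{n\in\S}\bar\pi(n)G(n),
\end{equation*}
and letting $T\to\infty$ with $F^{T}(n)/T\to\FF$ yields $|\FF-\bar\FF|\leq \sum_n\bar\pi(n)G(n)$, which rearranges to the two-sided bound in the statement.

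The main obstacle is analytic rather than algebraic: justifying the interchanges of limit, summation over the infinite state space $\S$, and the passage $F^{T}(n)/T\to\FF$ uniformly enough to survive averaging against $\bar\pi$. One needs, for instance, that $F$ has at most the growth controlled by the geometric tails of $\bar\pi$ and $\pi$, that $G$ is $\bar\pi$-integrable, and that the bias terms $D_u^t(n)$ do not blow up — these are exactly the implicit regularity conditions folded into the hypotheses, and a careful proof would either impose $F,\bar F,G\in\Cset_+$ (so the sums against a geometric product form converge) together with an ergodicity/uniform-integrability assumption on $R$, or cite the general version in~\cite{vandijk11inbook}. For the purposes of this paper I would state these integrability conditions explicitly and then carry out the telescoping argument above, which is otherwise entirely elementary.
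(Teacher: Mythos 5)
The paper gives no proof of this theorem: it is imported verbatim from van Dijk's survey \cite{vandijk11inbook}, so there is nothing in the paper itself to compare against. Your argument is, in essence, the standard Markov reward proof, and the algebra is correct: using $\sum_u q_{k,u}=0$ to convert $\sum_u q_{k(n),u}F^{t-1}(n+u)$ into $\sum_u q_{k(n),u}D_u^{t-1}(n)$, applying the hypothesis at time $t-1$, and telescoping is exactly the right mechanism, and the bracketed quantity you bound is precisely the negative of the expression in~\eqref{eq:constrtherror}, so the sign works out. The one genuine stylistic difference from the textbook route is that van Dijk expands $\bar F^{T}(n)-F^{T}(n)$ along paths of $\bar R$ started from a \emph{fixed} state and then invokes Ces\`aro convergence of the $t$-step distributions $\bar P^{t}(n,\cdot)$ to $\bar\pi$, whereas you average against $\bar\pi$ from the outset and use only its invariance; your version avoids one limit theorem at the price of requiring summability against $\bar\pi$ at every intermediate step. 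The analytic caveats you flag are the real content hidden in the citation: for unbounded $\C$-linear $F$ the pointwise limit $F^{T}(n)/T\to\FF$ and its interchange with $\sum_n\bar\pi(n)(\cdot)$ need a domination such as $\sup_{t}\mathbb{E}_n F(X_t)$ being $\bar\pi$-integrable (note that the crude bound $\mathbb{E}_n F(X_t)\leq F(n)+Ct$ does \emph{not} suffice, since its Ces\`aro average grows with $T$), together with $\sum_n\bar\pi(n)G(n)<\infty$ for the bound to be nontrivial. With those conditions stated, your proof is complete and self-contained where the paper merely cites.
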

The crucial element in the above theorem are the bias terms $D_u^t(n)$. It is in general not possible to find closed form expressions for the bias terms. Therefore, the usual means of applying the theorem is to find bounds on these bias terms. These bounds then lead to a function $G$ satisfying~\eqref{eq:constrtherror}. The difficulty in practice is that even finding suitable bounds on the bias terms is a challenging task.
The only means that is available in the literature for tightly bounding the bias terms is to carefully inspect the structure of the process at hand and meticulously craft suitable bounds. The main contribution of the current work is a means of establishing error bounds for random walk that do not require manual construction of bounds on the bias terms. 

We illustrate in the next section an application of Theorem~\ref{th:error} to an example. The purpose is to illustrate the difficulties mentioned above, but more importantly to introduce some of techniques that will be developed in Section~\ref{sec:result}.


\begin{figure}
\hfill
\begin{tikzpicture}[scale=.5]
\tikzstyle{axes}=[very thin]
\tikzstyle{trans}=[very thick,-latex]
\tikzstyle{intloop}=[->,to path={
    .. controls +(30:3) and +(-30:3) .. (\tikztotarget) \tikztonodes}]
\tikzstyle{hloop}=[-latex,to path={
    .. controls +(-60:1) and +(-120:1) .. (\tikztotarget) \tikztonodes}]
\tikzstyle{vloop}=[->,to path={
    .. controls +(-150:1) and +(-210:1) .. (\tikztotarget) \tikztonodes}]
\tikzstyle{oloop}=[->,to path={
    .. controls +(255:1) and +(195:1) .. (\tikztotarget) \tikztonodes}]

\draw[axes] (0,0)  -- node[at end, below] {$\scriptstyle \rightarrow n_1$} (6.5,0);
\draw[axes] (0,0) -- node[at end, left] {$\scriptstyle {\uparrow} {n_2}$} (0,6.5);
\draw[trans] (0,0) to node[below,at end] {\fm{\lambda_1}} +(1,0);
\draw[trans] (0,0) to node[left, at end] {\fm{\lambda_2}} +(0,1);
   
\draw[trans] (4,0) to node[at end, below] {\fm{\mu_1}} +(-1,0);
\draw[trans] (4,0) to node[at end, below] {\fm{\lambda_1}} +(1,0);
\draw[trans] (4,0) to node[at end, anchor = south]  {\fm{\lambda_2}} +(0,1);
   
\draw[trans] (0,4) to node[left, at end] {\fm{\mu_2}} +(0,-1);
\draw[trans] (0,4) to node[at end, anchor = west] {\fm{\lambda_1}} +(1,0);
\draw[trans] (0,4) to node[left, at end] {\fm{\lambda_2}} +(0,1);
    
\draw[trans] (4,4) to node[at end, anchor = west] {\fm{\lambda_1}} +(1,0);
\draw[trans] (4,4) to node[at end, anchor = south] {\fm{\lambda_2}} +(0,1);
\draw[trans] (4,4) to node[at end, anchor = north east] {\fm{\mu}} +(225:1);

\draw[very thick,-latex] (0,0) to[oloop] node[below] {\fm{\mu}} (0,0);
\draw[very thick,-latex] (4,0) to[hloop] node[below=.4mm] {\fm{\mu-\mu_1}} (4,0);
\draw[very thick,-latex] (0,4) to[vloop] node[left] {\fm{\mu-\mu_2}} (0,4);

\end{tikzpicture}
\hfill{}
\caption{
Random walk with joint departures.
\label{fig:nc}}
\end{figure}
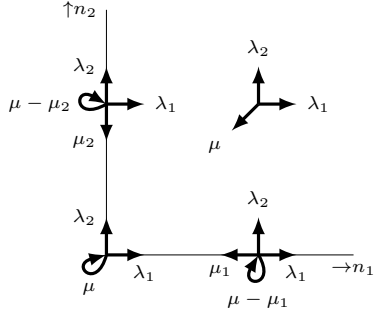

%
%
%
\section{Motivating example} \label{sec:motivation}
We consider a random walk arising from a queueing application in communication networks. The application is network coding in a two-way relay as recently studied in~\cite{goseling13peva}. For details on the application we refer the reader to~\cite{goseling13peva} and the references therein. The model corresponds to two queues with simultaneous departures from both queues. In case one of the queues is empty the other queue services packets at a lower rate. The non-zero transition probabilities in the corresponding random walk, obtained by uniformization of the continuous-time queueing model with Poisson arrivals and exponential service, are
\begin{equation}
p_{k,e_1} = \lambda_1,\quad p_{k,e_2}=\lambda_2,
\end{equation}
for $k=1,\dots,4$,
\begin{equation}
p_{1,\mea} = \mu_1,\quad p_{2,\meb} =\mu_2,\quad p_{4,\mdb}=\mu, 
\end{equation}
and
\begin{equation}
p_{1,\o} = \mu-\mu_1,\quad p_{2,\o}=\mu-\mu_2,\quad p_{3,0}=\mu,
\end{equation}
where $\lambda_1+\lambda_2+\mu=1$ and $\mu_i\leq \mu$, $i=1,2$. The normalization $\lambda_1+\lambda_2+\mu=1$ arises naturally from the uniformization of the continuous-time model and does not impose a restriction on the models that can be analyzed. The transition diagram is depicted in Figure~\ref{fig:nc}. We will refer to this process as the random walk with joint departures.

No closed form expression for the stationary distribution $\pi$ of this random walk is known in general.
Therefore, we consider the perturbed random walk $\bar R$, with
\begin{gather}
 \bar p_{1,\mea} = \bar\mu_1,\quad \bar p_{2,\meb} = \bar\mu_2,
\end{gather}
and $\bar p_{k,u} = p_{k,u}$ for other values of $k$ and $u$. In particular, we consider $\bar\mu_1+\bar\mu_2=\mu$, since in that case it is known~\cite{goseling13peva} that if $0<r_1<1$ and $0<r_2<1$ are the unique solution of
\begin{equation} \label{eq:prelimr}
\bar\mu_1 r_1+\bar\mu_2 r_1r_2 = \lambda_1,\quad
\bar\mu_2r_2+ \bar\mu_1 r_1r_2 = \lambda_2,
\end{equation}
then the stationary distribution of $\bar R$ is a geometric product-form,
 $\bar \pi(n) = (1-r_1)r_1^{\na}(1-r_2)r_2^{\nb}.$

The performance measure that we consider is the probability that both queues are empty, \ie we consider
\begin{equation}
F(n)=
\begin{cases}
1, \quad &\text{if }n=(0,0),\\
0, \quad &\text{otherwise}
\end{cases}
\end{equation}
and we are interested in $\FF=\sum_{n\in\SS}F(n)\pi(n)$. The reason that we consider this performance measure is that with the techniques that are used in this section we have been unsuccessful in establishing results for other performance meaures like the expected number of customers in the first queue. The difficulty in establishing results for other performance measures is an important motivation for the current paper.

The challenge is to apply Theorem~\ref{th:error} and obtain bounds on $\FF$ in terms of $\bar\pi$. As indicated in the discussion below the statement of Theorem~\ref{th:error} we need to establish bounds on the bias terms $D_u^t(n)$. A first inspection of $\bar R$ and $R$ reveils that $q_{k,u}=\bar p_{k,u} - p_{k,u}$ are zero if $u\not\in\{-e_1,-e_2,\o\}$. Therefore, we need to establish only bounds on $D_{\mea}^t(n)$, $D^t_{\meb}(n)$ and $D^t_{\o}(n)$ in order to find functions $\bar F$ and $G$ satisfying~\eqref{eq:constrtherror}. Since, furthermore, $D^t_{\o}(n)=0$ and $-D_{-e_i}^t(n)=-D^t_i(n-e_i)$, we will only consider $D^t_1(n)$ and $D^t_2(n)$. We provide in the next proposition an expression for the bias terms at time $t+1$ in terms of the bias terms at time $t$. The result has been obtained by a careful examination of the particular structure of the random walk with joint departures and the performance measure at hand. We will use this recursive result on the bias terms to derive upper and lower bounds on these bias terms.
\begin{proposition} \label{prop:dynD}
Let $R$ be a random walk with joint departures and $F(n)=\II{n=(0,0)}$. Then
\begin{equation} \label{eq:prelimd1}
D_1^{t+1}(n) =
\begin{cases}
\sum_{i=1}^2 \lambda_iD_1^t(n+e_i) + \mu_1 D_1^t(n-e_1) + (\mu-\mu_1)D_1^t(n) ,\quad &\text{if }n\in\C_1,\\
\sum_{i=1}^2 \lambda_iD_1^t(n+e_i) - (\mu-\mu_2) D_2^t(n-e_2),\quad &\text{if }n\in\C_2,\\ 
-1 + \sum_{i=1}^2 \lambda_iD_1^t(n+e_i) + (\mu-\mu_1)D_1^t(n),\quad &\text{if }n\in\C_3,\\ 
\sum_{i=1}^2 \lambda_iD_1^t(n+e_i) + \mu D_1^t(n-d_1),\quad &\text{if }n\in\C_4\\ 
\end{cases}
\end{equation}
and
\begin{equation} \label{eq:prelimd2}
D_2^{t+1}(n) =
\begin{cases}
\sum_{i=1}^2 \lambda_iD_2^t(n+e_i) - (\mu-\mu_1)D_1^t(n-e_1) ,\quad &\text{if }n\in\C_1,\\
\sum_{i=1}^2 \lambda_iD_2^t(n+e_i) + \mu_2 D_2^t(n-e_2) + (\mu-\mu_2) D_2^t(n),\quad &\text{if }n\in\C_2,\\ 
-1 + \sum_{i=1}^2 \lambda_iD_2^t(n+e_i) + (\mu-\mu_2)D_2^t(n),\quad &\text{if }n\in\C_3,\\ 
\sum_{i=1}^2 \lambda_iD_2^t(n+e_i) + \mu D_2^t(n-d_1),\quad &\text{if }n\in\C_4,\\ 
\end{cases}
\end{equation}
for all $n\in\SS$ and $t>0$.
\end{proposition}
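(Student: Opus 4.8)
The plan is to derive the recursions directly from the defining recursion~\eqref{eq:dynF} for $F^t$, specialized to the random walk with joint departures. Recall that $D_i^t(n) = F^t(n+e_i) - F^t(n)$. Writing~\eqref{eq:dynF} at the two states $n+e_i$ and $n$ and subtracting, one obtains
\begin{equation*}
D_i^{t+1}(n) = F(n+e_i) - F(n) + \sum_{u\in\B_{k(n+e_i)}}p_{k(n+e_i),u}F^t(n+e_i+u) - \sum_{u\in\B_{k(n)}}p_{k(n),u}F^t(n+u).
\end{equation*}
Since $F(n) = \II{n=(0,0)}$, the term $F(n+e_i)-F(n)$ contributes $0$ unless $n=(0,0)$, in which case it contributes $-1$; this explains the ``$-1$'' appearing exactly in the $\C_3$ cases. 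The remaining work is to simplify the difference of the two transition sums in each of the four cases $n\in\C_1,\C_2,\C_3,\C_4$, using the explicit values of $p_{k,u}$ for the joint-departures walk.

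The key structural observation that makes everything collapse into bias terms is that when $n$ and $n+e_i$ lie in the same component (or in components whose transition probabilities agree on the relevant directions), the two transition sums have matching coefficients, so their difference telescopes into a sum of $D$-terms. I would handle the two displays~\eqref{eq:prelimd1} and~\eqref{eq:prelimd2} separately. For $D_1^{t+1}$ (i.e.\ $i=1$, comparing $n+e_1$ with $n$): if $n\in\C_4$ then both $n$ and $n+e_1$ are interior, all coefficients match, and the common ``$\lambda_i$'' transitions give $\sum_i\lambda_i D_1^t(n+e_i)$ while the ``$\mu$'' transition in direction $-d_2$ gives $\mu\bigl(F^t(n+e_1-d_2)-F^t(n-d_2)\bigr) = \mu D_1^t(n-d_2)$ — wait, here one must be careful that $-d_2 = (-1,1)$, so $n+e_1-d_2 = n + (0,1)$ and $n-d_2 = n+(-1,1)$, and the difference is $D_1^t(n-d_1)$ since $(0,1)-(-1,1)=(1,0)=e_1$ at base point $n-d_1$; this matches the stated $\mu D_1^t(n-d_1)$. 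If $n\in\C_1$ then $n+e_1\in\C_1$ as well (for $n=(n_1,0)$ with $n_1\ge1$), both sites use the horizontal-axis probabilities, and the $\mu_1$ (direction $-e_1$) and $\mu-\mu_1$ (self-loop) transitions each produce a telescoped $D_1^t$ term as written. If $n\in\C_3$ then $n=(0,0)$ and $n+e_1=(1,0)\in\C_1$: now the components differ, so I would write out $\sum_{u\in\B_1}p_{1,u}F^t((1,0)+u) - \sum_{u\in\B_3}p_{3,u}F^t(u)$ explicitly and check that the $\mu_1$-terms cancel (because $(1,0)+(-1,0)=(0,0)$ already appears with the right sign) leaving $\sum_i\lambda_i D_1^t(e_i) + (\mu-\mu_1)D_1^t((0,0))$. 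The genuinely asymmetric case is $n\in\C_2$: then $n=(0,n_2)$ with $n_2\ge1$ lies on the vertical axis but $n+e_1=(1,n_2)$ lies in the interior, so the two sums use $\B_2$-probabilities and $\B_4$-probabilities respectively; after cancelling the common $\lambda_i$ contributions one is left with interior terms from $n+e_1$ (the $\mu$ in direction $-d_2$) minus vertical-axis terms from $n$ (the $\mu-\mu_2$ self-loop and $\mu_2$ in direction $-e_2$), and the bookkeeping shows this equals $-(\mu-\mu_2)D_2^t(n-e_2)$. The computation for $D_2^{t+1}$ is entirely symmetric under interchanging the roles of the two coordinates, with the boundary roles of $\C_1$ and $\C_2$ swapped.

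The main obstacle is precisely this last bookkeeping across component boundaries (the $\C_2$ case for $D_1$ and the $\C_1$ case for $D_2$, and to a lesser extent the $\C_3$ cases): one has to track which transitions from the interior neighbour $n+e_1$ land on which transitions-images from the axis state $n$, verify that the mismatched groups recombine into a \emph{single} bias term $D_2^t$ at the correct base point with the correct sign, and confirm that no leftover raw $F^t$ values survive. I would organize this by, for each case, listing the multiset of target states weighted by $\pm$ coefficients and cancelling term by term; the normalization $\lambda_1+\lambda_2+\mu=1$ and $\mu_i\le\mu$ guarantee all listed probabilities are the genuine transition probabilities and that the self-loop masses $\mu-\mu_i$ are nonnegative, but the identity itself is purely algebraic and does not use these inequalities. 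Once each of the eight cases is checked, the proposition follows for all $n\in\S$ and $t>0$.
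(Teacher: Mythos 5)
Your overall strategy is exactly the paper's: expand $F^{t+1}$ at $n+e_i$ and at $n$ via the one-step recursion, note that $F(n+e_i)-F(n)$ contributes $-1$ only at the origin, and telescope the difference of the two transition sums into bias terms case by case (the paper only writes out the $\C_2$ case of~\eqref{eq:prelimd1} and declares the rest similar, whereas you sketch all eight cases). The $\C_1$, $\C_3$ and $\C_4$-telescoping logic is fine in spirit, and your identification of the cross-component cases ($\C_2$ for $D_1$, $\C_1$ for $D_2$) as the only genuinely asymmetric ones is correct.

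There is, however, one concrete error. You take the interior service transition to be in direction $-d_2=(-1,1)$ and then assert that $F^t(n+(0,1))-F^t(n+(-1,1))$ equals $D_1^t(n-d_1)$. It does not: by definition $D_1^t(m)=F^t(m+e_1)-F^t(m)$, so that difference is $D_1^t(n+(-1,1))=D_1^t(n-d_2)$, since the base point is the subtracted argument, not $n-d_1=n+(-1,-1)$. The resolution is that the joint-departure transition is actually in direction $-d_1=(-1,-1)$ (both queues lose a customer simultaneously, as drawn in Figure~\ref{fig:nc} and as used in the paper's own computation, which writes $\mu F^t(n+e_1-d_1)$); the model description's ``$p_{4,\mdb}$'' is a typo for the $-d_1$ direction. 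This matters beyond the $\C_4$ case: your $\C_2$ bookkeeping for $D_1^{t+1}$ relies on $n+e_1-d_1=n-e_2$ colliding with the target of the $\mu_2$-transition from $n$, so that $\mu F^t(n-e_2)-\mu_2F^t(n-e_2)-(\mu-\mu_2)F^t(n)=-(\mu-\mu_2)D_2^t(n-e_2)$; with direction $-d_2$ the interior term would land at $n+e_2$ instead and the claimed cancellation would not occur. Replace $-d_2$ by $-d_1$ throughout and your argument goes through as the paper's does.
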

\begin{proof}{Proof:}
We prove~\eqref{eq:prelimd1} for the case that $n\in\C_2$. The proofs for the other cases and for~\eqref{eq:prelimd2} follow in similar fashion. Note that for $n\in\C_2$, $n+e_1\in\C_4$. We have
\begin{align}
 D_1^{t+1}(n)
=&\ F(n+e_1)-F(n) + \sum_{u\in\B_4} p_{4,u}F^t(n+e_1+u) - \sum_{v\in\B_{2}} p_{2,v}F^t(n+v) \\
=&\ \sum_{i=1}^2 \lambda_i F^t(n+e_1+e_i) + \mu F^t(n+e_1-d_1) \notag\\
  &\ - \sum_{i=1}^2 \lambda_i F^t(n+e_i) - \mu_2 F^t(n-e_2) - (\mu-\mu_2) F^t(n) \\
=&\ \sum_{i=1}^2 \lambda_iD_1^t(n+e_i) + (\mu-\mu_2) F^t(n-e_2) -  (\mu-\mu_2) F^t(n-e_2+e_2) \\
=&\ \sum_{i=1}^2 \lambda_iD_1^t(n+e_i) - (\mu-\mu_2) D^t_2(n-e_2),
\end{align}
where the first equality follows from~\eqref{eq:dynF} and the second equality from $n\in\C_2$ and the structure of the random walk with joint departures. \Halmos
\end{proof}
 The general method as presented in Section~\ref{sec:result} is also based on first establishing such a recursive relation on the bias terms. It is a priori not clear that such a relation can always be found. One of the results presented in this paper is that for random walks this is indeed possible. Moreover, we provide a structured means of finding such relation. This leverages the need for manual derivations as performed in development of Proposition~\ref{prop:dynD}.

The next proposition provides the actual bounds on the bias terms. For clarity of exposition we consider the symmetrical case that $\lambda_1=\lambda_2=\lambda$ and $\mu_1=\mu_2=\mu^*$.
\begin{proposition} \label{prop:prelimbias}
Let $R$ be a random walk with joint departures and $F(n)=\II{n=(0,0)}$. If $\lambda_1=\lambda_2$ and $\mu_1=\mu_2=\mu^*$ then
\begin{equation} \label{eq:prelimab}
-\frac{1}{\mu^*}\leq D_i^t(n) \leq \frac{\mu-\mu^*}{\mu\mu^*}
\end{equation}
for $i\in\{1,2\}$, $n\in\SS$ and $t\geq 0$.
\end{proposition}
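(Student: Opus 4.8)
The plan is to prove the bounds in~\eqref{eq:prelimab} by induction on $t$, using the recursion for the bias terms supplied by Proposition~\ref{prop:dynD} (specialized to $\lambda_1=\lambda_2=\lambda$, $\mu_1=\mu_2=\mu^*$). The base case $t=0$ is immediate since $F^0\equiv 0$ forces $D_i^0(n)=0$, which lies in $[-1/\mu^*,\,(\mu-\mu^*)/(\mu\mu^*)]$ because $\mu^*\le\mu$. For the inductive step, assume $-1/\mu^*\le D_i^t(m)\le (\mu-\mu^*)/(\mu\mu^*)$ for all $m\in\SS$ and both $i$; I want to show the same for $t+1$. The key structural observation is that in each case of~\eqref{eq:prelimd1}--\eqref{eq:prelimd2}, $D_i^{t+1}(n)$ is written as an affine combination of bias terms at time $t$: in the interior and on the ``matching'' axis ($\C_1$ for $D_1$, $\C_2$ for $D_2$) the coefficients $\lambda,\lambda,\mu^*,\mu-\mu^*$ (resp.\ $\lambda,\lambda,\mu$) are nonnegative and sum to $2\lambda+\mu=1$, so $D_i^{t+1}(n)$ is a genuine convex combination and the induction hypothesis passes through verbatim. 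The origin case contributes an extra $-1$, and the ``cross'' axis case ($\C_2$ for $D_1$, $\C_1$ for $D_2$) contributes a term $-(\mu-\mu^*)D_j^t(\cdot)$ with the \emph{opposite} sign, so these two cases are where the asymmetric bounds $-1/\mu^*$ and $(\mu-\mu^*)/(\mu\mu^*)$ are actually needed, and where the argument has real content.

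I would handle the two delicate cases as follows. On $\C_3$ (origin), $D_1^{t+1}(n) = -1 + 2\lambda D_1^t(n+e_i)\text{-average} + (\mu-\mu^*)D_1^t(n)$, where the coefficients $\lambda,\lambda,\mu-\mu^*$ are nonnegative but sum only to $2\lambda+\mu-\mu^* = 1-\mu^*$. For the lower bound: $D_1^{t+1}(n)\ge -1 + (1-\mu^*)\cdot(-1/\mu^*) = -1 - 1/\mu^* + 1 = -1/\mu^*$, using the induction hypothesis on the nonnegatively-weighted terms; this is exactly tight, which is the whole reason the constant $-1/\mu^*$ appears. For the upper bound at the origin: $D_1^{t+1}(n)\le -1 + (1-\mu^*)\cdot(\mu-\mu^*)/(\mu\mu^*)$, and one checks this is $\le (\mu-\mu^*)/(\mu\mu^*)$ iff $-1 \le \mu^*\cdot(\mu-\mu^*)/(\mu\mu^*) = (\mu-\mu^*)/\mu$, which holds trivially. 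For the cross-axis case $n\in\C_2$: $D_1^{t+1}(n) = 2\lambda D_1^t(\cdot)\text{-avg} - (\mu-\mu^*)D_2^t(n-e_2)$, where now only the two $\lambda$-terms have nonnegative weight (summing to $2\lambda = 1-\mu$) and the last term carries weight $-(\mu-\mu^*)\le 0$. Upper bound: $D_1^{t+1}(n) \le (1-\mu)\cdot(\mu-\mu^*)/(\mu\mu^*) + (\mu-\mu^*)\cdot(1/\mu^*)$, and a short computation shows this equals exactly $(\mu-\mu^*)/(\mu\mu^*)$ (factor out $(\mu-\mu^*)/\mu^*$ to get $(1-\mu)/\mu + 1 = 1/\mu$); again tight. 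Lower bound: $D_1^{t+1}(n) \ge (1-\mu)\cdot(-1/\mu^*) - (\mu-\mu^*)\cdot(\mu-\mu^*)/(\mu\mu^*)$, and one verifies this is $\ge -1/\mu^*$, i.e.\ $-(\mu-\mu^*)^2/(\mu\mu^*) \ge -1/\mu^* + (1-\mu)/\mu^* = -\mu/\mu^*$, i.e.\ $(\mu-\mu^*)^2 \le \mu^2$, which holds since $0\le\mu-\mu^*\le\mu$. The cases for $D_2^{t+1}$ are identical by the $n_1\leftrightarrow n_2$ symmetry of the symmetric model, and the interior cases $\C_4$ are pure convex combinations so need nothing beyond the induction hypothesis.

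The main obstacle is not any single computation — each inequality above reduces to something like $(\mu-\mu^*)^2\le\mu^2$ or $(\mu-\mu^*)/\mu\le 1$ — but rather \emph{identifying the right pair of constants} so that the induction closes simultaneously in all four cases, including the two where signs fight. The asymmetry of the interval $[-1/\mu^*,(\mu-\mu^*)/(\mu\mu^*)]$ is forced: the origin case pins the lower endpoint (the $-1$ reward propagates and, in the worst case where every future bias term sits at the lower bound, accumulates to exactly $-1/\mu^*$ after summing the geometric-type series with ratio $1-\mu^*$), while the cross-axis case pins the upper endpoint. A secondary point to be careful about is that the recursion in Proposition~\ref{prop:dynD} is stated for $t>0$, so strictly the induction should be organized as: verify $D_i^1$ directly from $F^1(n)=F(n)$ (giving $D_i^1(n)=F(n+e_i)-F(n)\in\{-1,0\}$, which lies in the claimed interval since $1\le 1/\mu^*$), and then run the recursion for $t\ge 1$; alternatively one notes $D_i^t(n)=0$ for all $t$ at states far from the origin is not needed. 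I would also remark that although Proposition~\ref{prop:dynD} was derived by hand, the bounds here are exactly the kind of thing the linear-programming machinery of Section~\ref{sec:result} will produce automatically, so this proof is included mainly to motivate that machinery.
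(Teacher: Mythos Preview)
Your proposal is correct and follows exactly the approach the paper takes: the paper's proof consists of the single sentence that the base case $t=0$ is immediate since $F^0\equiv 0$ and $\mu^*\le\mu$, and that the rest ``follows from a simple induction on $t$ by verifying all eight cases in~\eqref{eq:prelimd1} and~\eqref{eq:prelimd2}.'' You have simply carried out those eight verifications explicitly, including the two tight ones (origin for the lower bound, cross-axis for the upper bound), and your side remark about the $t>0$ restriction in Proposition~\ref{prop:dynD} and how to handle $D_i^1$ directly is a sensible bit of care that the paper leaves implicit.
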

\begin{proof}{Proof:}
For $t=0$~\eqref{eq:prelimab} holds since $F^0(n)=0$ for all $n\in\SS$ and $u^*\leq\mu$. The proof now follows from a simple induction on $t$ by verifying all eight cases in~\eqref{eq:prelimd1} and~\eqref{eq:prelimd2}.\Halmos
\end{proof}
Recall that $\lambda_1+\lambda_2+\mu=1$. Therefore, even though the value of $\lambda$ influences the bounds that can be given on the bias terms, the presentation of the above result could be given in terms of $\mu$ and $\mu^*$ only. The difficulty in establishing the equivalent

The main result of this subsection is provided in the next proposition. It provides upper and lower bounds on the probability that the random walk with joint departures is in the origin.
\begin{proposition} \label{prop:prelimbounds}
Let $R$ and $\bar R$ be random walks with joint departures, $\lambda_1=\lambda_2=\lambda$. Let $R$ have $\mu_1=\mu_2=\mu^*$, where $\mu^*<\mu/2$. Let $\bar R$ have $\bar\mu_1=\bar\mu_2=\mu/2$. Finally, let $F(n)=\II{n=(0,0)}$. Then
\begin{equation}
 (1-r)^2 - g \leq \FF \leq (1-r)^2 + g,
\end{equation}
where
\begin{equation}
r = \frac{-1+\sqrt{1+8\lambda/\mu}}{2},\quad g = 2r(1-r)\frac{(\mu/2-\mu^*)(\mu-\mu^*)}{\mu\mu^*}.
\end{equation}
\end{proposition}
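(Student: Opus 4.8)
The plan is to apply Theorem~\ref{th:error} with an appropriate choice of the perturbed reward function $\bar F$ and the error bound function $G$, using the bias-term bounds from Proposition~\ref{prop:prelimbias}. First I would compute $\bar\FF = \sum_{n\in\SS}F(n)\bar\pi(n) = \bar\pi(0,0) = (1-r_1)(1-r_2)$, and since $\bar\mu_1=\bar\mu_2=\mu/2$ and $\lambda_1=\lambda_2=\lambda$, the symmetry of~\eqref{eq:prelimr} forces $r_1=r_2=r$; solving the resulting quadratic $\tfrac{\mu}{2}r+\tfrac{\mu}{2}r^2=\lambda$, i.e.\ $r^2+r-2\lambda/\mu=0$, gives exactly $r = \tfrac{-1+\sqrt{1+8\lambda/\mu}}{2}$ as claimed, so $\bar\FF=(1-r)^2$. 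This will be the ``center'' of the bound.

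Next I would take $\bar F = F$, so that the left-hand side of~\eqref{eq:constrtherror} reduces to $\big|\sum_{u} q_{k(n),u} D_u^t(n)\big|$. As observed in the motivating section, $q_{k,u}\neq 0$ only for $u\in\{\text{-}e_1,\text{-}e_2,\o\}$: specifically $q_{1,\mea} = \bar\mu_1-\mu^* = \mu/2-\mu^*$, $q_{1,\o} = -(\mu/2-\mu^*)$, and symmetrically $q_{2,\meb}=\mu/2-\mu^*$, $q_{2,\o}=-(\mu/2-\mu^*)$, with all $q_{k,u}=0$ for $k=3,4$. Since $D^t_{\o}(n)=0$ always, the sum in~\eqref{eq:constrtherror} is nonzero only on $\C_1\cup\C_2$, where it equals $(\mu/2-\mu^*)D^t_{\mea}(n)$ on $\C_1$ and $(\mu/2-\mu^*)D^t_{\meb}(n)$ on $\C_2$. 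Using $D^t_{\text{-}e_i}(n) = -D^t_i(n-e_i)$ together with Proposition~\ref{prop:prelimbias}, which gives $-\tfrac{1}{\mu^*}\le D^t_i(m)\le \tfrac{\mu-\mu^*}{\mu\mu^*}$, I would bound $|D^t_{\text{-}e_i}(n)|\le \tfrac{1}{\mu^*}$ on the relevant states (noting $\tfrac{1}{\mu^*}\ge\tfrac{\mu-\mu^*}{\mu\mu^*}$ since $\mu^*\le\mu$). This yields the constant choice $G(n) = \tfrac{\mu/2-\mu^*}{\mu^*}$ on $n\in\C_1\cup\C_2$ and $G(n)=0$ on $n\in\C_3\cup\C_4$, which satisfies~\eqref{eq:constrtherror}.

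Finally I would evaluate $\sum_{n\in\SS}G(n)\bar\pi(n)$. With $r_1=r_2=r$, the mass that $\bar\pi$ places on the horizontal axis $\C_1=\{1,2,\dots\}\times\{0\}$ is $(1-r)\sum_{n_1\ge 1}r^{n_1} = (1-r)\cdot\tfrac{r}{1-r} = r$ times the factor $(1-r)$ from the second coordinate being $0$, i.e.\ $r(1-r)$; by symmetry the vertical axis $\C_2$ carries the same mass $r(1-r)$. Hence $\sum_n G(n)\bar\pi(n) = \tfrac{\mu/2-\mu^*}{\mu^*}\cdot 2r(1-r) = 2r(1-r)\tfrac{(\mu/2-\mu^*)(\mu-\mu^*)}{\mu\mu^*}\cdot\tfrac{1}{\mu-\mu^*}$ --- wait, I need to recheck the algebra here: the claimed $g$ is $2r(1-r)\tfrac{(\mu/2-\mu^*)(\mu-\mu^*)}{\mu\mu^*}$, so the per-state bound on $\C_1\cup\C_2$ should be $\tfrac{(\mu/2-\mu^*)(\mu-\mu^*)}{\mu\mu^*}$, which is $(\mu/2-\mu^*)\cdot\tfrac{\mu-\mu^*}{\mu\mu^*}$. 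This means the correct bound to use on the bias terms is the \emph{upper} bound $D^t_i\le\tfrac{\mu-\mu^*}{\mu\mu^*}$ in magnitude rather than the lower bound $1/\mu^*$; since $\mu^*<\mu/2$ we have $\tfrac{\mu-\mu^*}{\mu\mu^*} < \tfrac{1}{\mu^*}$ only when $\mu-\mu^*<\mu$, which always holds, but then the lower bound $-1/\mu^*$ would dominate in absolute value. The resolution, and the \textbf{main obstacle}, is that one must bound $(\mu/2-\mu^*)D^t_{\text{-}e_i}(n)$ more carefully: since $\mu/2-\mu^*>0$, the quantity ranges in $\big[-(\mu/2-\mu^*)\tfrac{\mu-\mu^*}{\mu\mu^*},\ (\mu/2-\mu^*)\tfrac{1}{\mu^*}\big]$ after the sign flip, which is not symmetric, so a naive symmetric $G$ is wasteful. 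The tight argument must instead let $\bar F$ absorb the asymmetry — i.e.\ choose $\bar F(n) = F(n) + c$ on $\C_1\cup\C_2$ for a suitable constant $c$ that recenters the interval — and then $G$ becomes half the interval length, $G = (\mu/2-\mu^*)\cdot\tfrac12\big(\tfrac{1}{\mu^*}+\tfrac{\mu-\mu^*}{\mu\mu^*}\big)$; I would verify this simplifies to $(\mu/2-\mu^*)\tfrac{\mu-\mu^*}{\mu\mu^*}$ using $\tfrac12(\tfrac{1}{\mu^*}+\tfrac{\mu-\mu^*}{\mu\mu^*}) = \tfrac{\mu + \mu - \mu^*}{2\mu\mu^*} = \tfrac{2\mu-\mu^*}{2\mu\mu^*}$, which does \emph{not} match, so the recentering constant $c$ must also be chosen to feed back through the $\bar\pi$-average and cancel; tracking this bookkeeping — choosing $c$, re-deriving $G$, and confirming that $\sum_n[\bar F-F]\bar\pi(n)$ together with $\sum_n G\bar\pi(n)$ collapses to the stated $\pm g$ around $(1-r)^2$ — is the delicate part of the proof, and is presumably exactly the kind of manual juggling the paper later automates via linear programming.
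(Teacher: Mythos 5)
Your derivation up to and including the choice $\bar F=F$ and $G(n)=\tfrac{\mu/2-\mu^*}{\mu^*}$ on $\C_1\cup\C_2$ (with $G=0$ elsewhere) \emph{is} the paper's proof: the paper verifies exactly this pair against the reduced constraint~\eqref{eq:prelimconstrtherror}, notes that $r_1=r_2=r$ solves~\eqref{eq:prelimr}, uses $\sum_{n\in\C_1}\bar\pi(n)=\sum_{n\in\C_2}\bar\pi(n)=r(1-r)$, and invokes Theorem~\ref{th:error}. So the first two thirds of your proposal is correct and is the intended argument. The algebraic mismatch you then stumble on is real and is not of your making: with this $G$, Theorem~\ref{th:error} yields $(1-r)^2\pm 2r(1-r)\tfrac{\mu/2-\mu^*}{\mu^*}$, whereas the stated $g$ carries an extra factor $\tfrac{\mu-\mu^*}{\mu}<1$. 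From the ingredients actually supplied (Proposition~\ref{prop:prelimbias} gives only $-\tfrac{1}{\mu^*}\le D_i^t\le\tfrac{\mu-\mu^*}{\mu\mu^*}$, hence $|q_{1,-e_1}D^t_{-e_1}(n)|\le(\mu/2-\mu^*)\tfrac{1}{\mu^*}$ is the best available two-sided bound), the stated $g$ is not obtainable; either the proposition's $g$ or the proof's $G$ contains an error, and you were right to flag the discrepancy rather than paper over it. (A tighter, axis-specific lower bound on $D_i^t$ would be needed to legitimize the stated $g$; the paper does not provide one.)

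Where your proposal does go astray is in the attempted repair. Recentering $\bar F$ on the axes cannot work: Theorem~\ref{th:error} requires $\bar F\ge 0$ while $F\equiv 0$ on $\C_1\cup\C_2$, so the negative shift $c=-\tfrac{\mu/2-\mu^*}{2\mu}$ that your centering calls for is inadmissible. Even ignoring nonnegativity, write the interval guaranteed for $q_{1,-e_1}D^t_{-e_1}(n)$ as $[-a,b]$ with $a=(\mu/2-\mu^*)\tfrac{\mu-\mu^*}{\mu\mu^*}$ and $b=(\mu/2-\mu^*)\tfrac{1}{\mu^*}>a$; any admissible pair must satisfy $G(n)\ge\bar F(n)+b$ on the axes, whence $\bar F(n)+G(n)\ge b$ and $\bar F(n)-G(n)\le -b$ there. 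Recentering can therefore only trade one side of the bound against the other and can never deliver the symmetric $\pm g$ with $g$ built from the smaller constant $a$. The clean conclusion is to stop at the symmetric constant $G=\tfrac{\mu/2-\mu^*}{\mu^*}$, which is what the paper's proof in fact establishes, and to record that the constant in the displayed $g$ should be read accordingly.
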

\begin{proof}{Proof:}
For this particular $R$ and $\bar R$ we have
\begin{equation} \label{eq:prelimq}
q_{k,u} =
\begin{cases}
\mu/2-\mu^*,\quad &\text{if }k=i, u=-e_i, i\in\{1,2\},\\
\mu^*-\mu/2,\quad &\text{if }k\in\{1,2\}, u=\o,\\
0,\quad &\text{otherwise}.\\
\end{cases}
\end{equation}
From~\eqref{eq:prelimq} and the discussion leading to Proposition~\ref{prop:dynD} it follows that if $\bar F$ and $G$ satisfy
\begin{equation} \label{eq:prelimconstrtherror}
\Big| \bar F(n) - F(n) + \sum_{\mathclap{i=1,2}}q_{k(n),\mei}D^t_i(n-e_i) \Big| \leq G(n), 
\end{equation}
then they satisfy~\eqref{eq:constrtherror}. Let $\bar F(n)=F(n)$ and
\begin{equation}
G(n)=
\begin{cases}
\displaystyle \frac{\mu/2-\mu^*}{\mu^*},\quad &\text{if } n\in\C_1 \text{ or } n\in\C_2, \\
0,\quad &\text{otherwise}.
\end{cases} 
\end{equation}
Using the fact that $\mu/2-\mu^*>0$ it is readily verified that these $\bar F$ and $G$ satisfy~\eqref{eq:prelimconstrtherror}. By observing that $r=r_1=r_2$ is the unique positive solution of~\eqref{eq:prelimr} and that
\begin{equation}
 \sum_{n\in\C_1} \bar\pi(n) = \sum_{n\in\C_2} \bar\pi(n) = r(1-r),
\end{equation}
the result follows from Theorem~\ref{th:error}.\Halmos
\end{proof}
For the case that $\mu^*>\mu/2$ a similar result can easily be obtained. The assymetrical case $\lambda_1\neq\lambda_2$ and/or $\mu_1\neq\mu_2$ is significantly more challenging in the sense that without the tools that are developed in Section~\ref{sec:result} of the current paper, generalizing Proposition~\ref{prop:prelimbias} is mostly a matter of guessing the correct form of the bounds and verifying valididity. One of the contributions of this paper is to generate the bounds on the bias terms and the functions $\bar F$ and $G$ by solving a linear program.

An added benefit of formulating the construction of error bounds in an optimization framework is that we can use as an objective the minimization of the upper bound on $\FF$. This will produce, within the class of functions $\bar F$ and $G$ that are under consideration, the tightest possible error bound. In this subsection we obtained constant bounds on the bias terms and piecewise constant functions $\bar F$ and $G$. A natural question is to ask whether better bounds could have been obtained by allowing, for instance, piecewise constant functions for the bounds on the bias terms. The answer is affirmative. The improved bounds will be presented in Section~\ref{sec:example}. In Section~\ref{sec:example} we will also give performance bounds for other performance measures, for instance, the marginal first moments.

%
%
%
\section{A linear programming approach to error bounds} \label{sec:result}
In this section we will present our approach to the error bound method. We develop a linear program that provides an upper bound to the performance approach to finding the approximation error. We restrict our attention to the case that $R$ and $\bar R$ differ only for transitions that are along the unit directions, \ie throughout this section we assume that
\begin{equation}
q_{k,u}=\bar p_{k,u} - p_{k,u}=0\quad\text{for}\quad u\neq\{e_1,e_2,-e_1,-e_2,\o\}.
\end{equation}
The reason for this restriction is that it significantly simplifies the presentation of the result. A generalization of the result to arbitrary $R$ and $\bar R$ is given in Section~\ref{sec:generalize}. In Section~\ref{sec:generalize} we also present the corresponding result that provides a lower bound on the performance. 

The outline of this section is as follows. In Subsection~\ref{ssec:optbound} we formulate a first minimization problem that provides an upper bound on $\FF$. This problem can not be solved efficiently, since it depends on the unkown bias terms. Therefore, we develop a framework for bounding the bias terms in Subsection~\ref{ssec:boundbiasterms}. The main result of this section, the error bound result itself, is given in Subsection~\ref{ssec:errorbound}. In Subsection~\ref{ssec:finite} it is shown that the corresponding optimization problem is linear with a finite number of variables and a finite number of constraints.

\subsection{An optimized error bound} \label{ssec:optbound}
To start, consider the following optimization problem.
\begin{problem} \label{pr:minstart}
\begin{align}
 \text{minimize}\ &\sum_{n\in\S}\left[\bar F(n) + G(n)\right]\bar \pi(n), \\
\text{subject to}\
  &\Big| \bar F(n) - F(n) + \sum_{\mathclap{i=1,2}}\left(q_{k(n),e_i}D^t_i(n) + q_{k(n),\mei}D^t_i(n-e_i) \right)\Big|  \leq G(n),\quad \text{for }n\in\S, t\geq 0, \label{eq:prminstartconstr} \\
  &\bar F(n)\geq 0, G(n)\geq 0, \quad \text{for }n\in\S,
\end{align}
\end{problem}
The variables in Problem~\ref{pr:minstart} are the functions $\bar F$ and $G$; the functions $F$, $\bar\pi$ and $D_u^t$ are parameters. Alternatively we can interpret Problem~\ref{pr:minstart} as an optimization over variables $\bar F(n)$ and $G(n)$, with two such variables for each $n\in\S$. This directly leads to a linear optimization problem. Indeed the objective function in Problem~\ref{pr:minstart} is linear and the modulus in constraint~\eqref{eq:prminstartconstr} induces two linear inequalities for each $n\in\S$ and $t\geq 0$. This linear program has a countably infinite number of variables and constraints. Our main result, to be presented later in this section, is a reduction of the above problem to a linear program with a finite number of  variables and constraints.

Before proceeding, we show that the optimal value of Problem~\ref{pr:minstart} provides an upper bound on $\FF$. From $D^t_{\o}(n)=0$ and $D^t_{\mei}(n)=-D^t_{e_i}(n-e_i)$ it follows directly that if $q_{k,u}=\bar p_{k,u} - p_{k,u}=0$ for $u\neq\{e_1,e_2,-e_1,-e_2,\o\}$ then~\eqref{eq:constrtherror} is equivalent to~\eqref{eq:prminstartconstr}. Therefore, it follows from Theorem~\ref{th:error} that the optimal value of Problem~\ref{pr:minstart} provides an upper bound on $\FF$. The problem of maximizing $\sum_{n\in\S}\left[\bar F(n) - G(n)\right]\bar \pi(n)$ subject to the same constraints leads to a lower bound on $\FF$. Since the optimization problems providing the upper and the lower bound are closely related, we illustrate the development of our main result by means of Problem~\ref{pr:minstart}. The corresponding result for the lower bound will be given at the end of the section.

The most important difficulty in handling Problem~\ref{pr:minstart} is that constraint~\eqref{eq:prminstartconstr} is expressed in terms of the bias terms, \ie the unknown functions $D_u^t(n)$. As a first step in developing our linear program we introduce pairs of functions $A_i:\S\to [0,\infty)$ and $B_i:\S\to [0,\infty)$, $i=1,2$. In the next subsection we will formulate a finite number of constraints on these functions that guarantee that
\begin{equation} \label{eq:biasboundsrequired}
 -A_i(n) \leq D_i^t(n) \leq B_i(n),
\end{equation}
for all $t\geq 0$, \ie these functions provide bounds on the bias terms uniformly over all $t\geq 0$. For the moment we assume that constraints providing~\eqref{eq:biasboundsrequired} can be constructed and replace occurences of $D^t_u(n)$ with its bounds $-A_i(n)$ and $B_i(n)$. The advantage of doing so is that the new problem does not involve the unkown terms $D_i^t(n)$. In addition it reduces countably many constraints (one constraint for each $t\geq 0$) to a single constraint. By replacing in Problem~\ref{pr:minstart} occurences of $D_i^t(n)$ with its bounds $-A_i(n)$ and $B_i(n)$ we make the constraints more stringent, \ie the optimal value of an optimization problem based on these bounds still provides an upper bound on $\FF$.

We are now ready to formulate an optimization problem in terms of the functions $A_i$ and $B_i$. For clarity of exposition, we do not replace $D_u^t$ with $A_i$ or $B_i$ directly, but instead make use of auxiliary functions $E_i:\S\to\RR$, $i=1,2$. Replacing in Problem~\ref{pr:minstart} $D_i^t(n)$ with its bound leads to the following optimization problem.
\begin{problem} \label{pr:stepa}
\begin{align}
 \text{minimize}\ &\sum_{n\in\S}\left[\bar F(n) + G(n)\right]\bar \pi(n), \\
\text{subject to}\
  &\Big| \bar F(n) - F(n) + \sum_{\mathclap{i=1,2}}\left(q_{k(n),e_i}E_i(n) + q_{k(n),\mei}E_i(n-e_i) \right)\Big|  \leq G(n),\quad \text{for }n\in\S,  \\
  & -A_i(n) \leq E_i(n) \leq B_i(n),\quad \text{for }n\in\S, i\in\{1,2\},\\
  & -A_i(n) \leq D_i^t(n) \leq B_i(n),\quad \text{for }n\in\S, i\in\{1,2\}, t\geq 0, \label{eq:stepabiasbounds} \\
  &\bar F(n)\geq 0, G(n)\geq 0, 
\quad \text{for }n\in\S,
\end{align}
\end{problem}
In the above problem $\bar F$, $G$, $A_i$, $B_i$ and $E_i$ are the variables.
Obviously we did not really solve any of the underlying problems by reworking Problem~\ref{pr:minstart} into Problem~\ref{pr:stepa}. It remains to replace~\eqref{eq:stepabiasbounds} with constraints that do not involve the bias terms $D_i^t(n)$ themselves. Therefore, the aim of the next subsection is to provide such bounds on the bias terms.

\subsection{Bounding the bias terms} \label{ssec:boundbiasterms}
The goal of this subsection is to obtain constraints on $A_i:\SS\to[0,\infty)$ and $B_i:\SS\to[0,\infty)$ that ensure~\eqref{eq:stepabiasbounds}. These constraints are developed in an inductive framework, \ie based on an induction in $t$. Therefore, the first goal of this subsection is to provide a generalization of Proposition~\ref{prop:dynD}, by expressing $D_i^{t+1}$ as a linear combination of $D_1^t$ and $D_2^t$. Next, we will use this relation to develop the desired constraints.

Our first contribution is to show that we can always express $D_i^{t+1}$ as a linear combination of $D_1^t$ and $D_2^t$. More precisely, we introduce the constants $c_{i,k,j,u}$, $i\in\{1,2\}$, $k\in\{1,\dots,4\}$, $j\in\{1,2\}$, $u\in N_{k}$, and provide a set of sufficient conditions under which these constants satisfy
\begin{align} \label{eq:dynDb}
D_i^{t+1}(n)
&= F(n+e_i)-F(n) + \sum_{j=1,2}\sum_{u\in\B_{\k(n)}}c_{i,k(n),j,u}D_j^t(n+u).
\end{align}
One can think of $c_{i,k,j,u}$ as the contribution of $D^t_j(n+u)$ to $D_i^{t+1}(n)$ if $n\in\N_k$. In addition to the sufficient conditions we prove that there always exist values for $c_{i,k,j,u}$ that satisfy these conditions. In particular, we show that there exist a `universal' set of constants that can be used, \ie constants that are given by a fixed function of the transition probabilities.

First, sufficient conditions for~\eqref{eq:dynDb} are given. The result is expressed using $k[i]$. For $i=1,2$ we define $k[i]$ as $k(n+e_i)$, for $n\in C_k$. Recall from Section~\ref{sec:prelim} that $\B_k+e_i=\{u |\ u-e_i\in\B_k\}$. We formulate our conditions in the following assumption.
\begin{assumption} \label{ass:a}
The constants $c_{i,k,j,u}$, $i,j=1,2$, $k=1,\dots,4$, $u\in\B_k$, satisfy
\begin{multline} \label{eq:constantsconditionsmain}
\Iset{w-e_1}{\B_{k}}c_{i,k,1,w-e_1} - \Iset{w}{\B_{k}}c_{i,k,1,w}  + \Iset{w-e_2}{\B_{k}}c_{i,k,2,w-e_2} \\ - \Iset{w}{\B_{k}}c_{i,k,2,w}
= \Iset{w-e_i}{\B_{k[i]}}p_{k[i],w-e_i} - \Iset{w}{\B_k}p_{k,w}
\end{multline}
for all $i\in\{1,2\}$, $k\in\{1,\dots,4\}$ and $w\in \B_{k}\cup (\B_{k[1]}+e_1)\cup(\B_{k[2]}+e_2)$. 
\end{assumption}
We will show below that we can always find coefficients $c_{i,k,j,u}$ that satisfy the above assumption. Therefore, we will assume in the remainder of this paper that coefficients that satisfy Assumption~\ref{ass:a} have been given. Before, proving that such coefficients exist we will first provide a technical result that motivates the conditions in Assumption~\ref{ass:a}. The reason is that these conditions provide sufficient conditions for~\eqref{eq:dynDb}, a result that we formulate more precisely here.
\begin{lemma} \label{lem:dynD}
If Assumption~\ref{ass:a} holds then
\begin{equation} \label{eq:dynD}
D_i^{t+1}(n) = F(n+e_i)-F(n) + \sum_{j=1,2}\sum_{u\in\B_{\k(n)}}c_{i,k(n),j,u}D_j^t(n+u),
\end{equation}
for $i=1,2$, $n\in\S$ and $t>0$.
\end{lemma}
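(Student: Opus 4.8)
The goal is to establish the recursion \eqref{eq:dynD} for $D_i^{t+1}$ in terms of $D_1^t$ and $D_2^t$, assuming only the coefficient conditions of Assumption~\ref{ass:a}. The natural starting point is the definition \eqref{eq:dynF} of $F^t$, which gives $F^{t+1}(n) = F(n) + \sum_{u\in\B_{k(n)}} p_{k(n),u} F^t(n+u)$. Writing $D_i^{t+1}(n) = F^{t+1}(n+e_i) - F^{t+1}(n)$ and substituting this expression at both $n+e_i$ and $n$, one obtains
\begin{equation*}
D_i^{t+1}(n) = \big(F(n+e_i) - F(n)\big) + \sum_{v\in\B_{k[i]}} p_{k[i],v} F^t(n+e_i+v) - \sum_{v\in\B_{k(n)}} p_{k(n),v} F^t(n+v),
\end{equation*}
where $k[i] = k(n+e_i)$ as defined before the assumption. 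So the first constant term $F(n+e_i)-F(n)$ already matches \eqref{eq:dynD}, and what remains is to show that the difference of the two weighted sums of $F^t$-values equals $\sum_{j=1,2}\sum_{u\in\B_{k(n)}} c_{i,k(n),j,u} D_j^t(n+u)$.

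The key algebraic idea is to reindex both sums so that all $F^t$-values are evaluated at points of the form $n+w$ for $w$ ranging over $\B_{k(n)} \cup (\B_{k[1]}+e_1) \cup (\B_{k[2]}+e_2)$ — exactly the index set appearing in Assumption~\ref{ass:a}. In the first sum, substituting $w = e_i + v$ turns $\sum_v p_{k[i],v} F^t(n+e_i+v)$ into $\sum_w \Iset{w-e_i}{\B_{k[i]}} p_{k[i],w-e_i} F^t(n+w)$; the second sum is already $\sum_w \Iset{w}{\B_{k(n)}} p_{k(n),w} F^t(n+w)$. Thus the coefficient of $F^t(n+w)$ on the left-hand side is precisely the right-hand side of \eqref{eq:constantsconditionsmain}. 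On the other side, each $D_j^t(n+u) = F^t(n+u+e_j) - F^t(n+u)$, so $\sum_{j,u} c_{i,k,j,u} D_j^t(n+u)$ expands into a combination of $F^t(n+w)$ terms; collecting the coefficient of $F^t(n+w)$ gives exactly the left-hand side of \eqref{eq:constantsconditionsmain} (the term $\Iset{w-e_j}{\B_k} c_{i,k,j,w-e_j}$ comes from the $u = w-e_j$ contribution to the ``$+e_j$'' part of some $D_j^t$, and $-\Iset{w}{\B_k} c_{i,k,j,w}$ from the $u = w$ contribution to the ``$-$'' part). Hence Assumption~\ref{ass:a} is precisely the statement that these two coefficient families agree for every $w$, which forces the two $F^t$-expressions to be equal term by term, and \eqref{eq:dynD} follows.

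The main point requiring care — and the step I expect to be the most error-prone rather than conceptually hard — is the bookkeeping of the index sets and indicator functions: one must check that every $F^t(n+w)$ appearing in either the $p$-sums or the $D$-sums indeed has $w$ lying in $\B_{k(n)} \cup (\B_{k[1]}+e_1) \cup (\B_{k[2]}+e_2)$, so that summing \eqref{eq:constantsconditionsmain} against $F^t(n+w)$ over this set captures all contributions with no leftover terms. A secondary subtlety is that $k[i]$ depends on which component $n$ lies in; since homogeneity holds within each component and $\B_k + e_i$ is defined to track exactly the shifted neighbor set, the identity in Assumption~\ref{ass:a} is stated per-$k$ and the argument goes through uniformly in $n\in\C_k$. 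I would present the reindexing explicitly for general $w$ and then simply invoke \eqref{eq:constantsconditionsmain} to conclude, remarking that the case $n\in\C_2$ of Proposition~\ref{prop:dynD} is recovered by computing the corresponding $c_{i,k,j,u}$ — a sanity check rather than part of the proof.
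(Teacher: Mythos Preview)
Your proposal is correct and follows essentially the same approach as the paper: expand $D_i^{t+1}(n)$ via \eqref{eq:dynF}, reindex both $p$-sums so that every $F^t$-value is of the form $F^t(n+w)$ with $w$ ranging over $\B_{k}\cup(\B_{k[1]}+e_1)\cup(\B_{k[2]}+e_2)$, and then invoke \eqref{eq:constantsconditionsmain} to identify the resulting coefficient of $F^t(n+w)$ with that obtained by expanding $\sum_{j,u} c_{i,k,j,u}D_j^t(n+u)$. The paper's proof is exactly this chain of equalities, including the explicit remark that extending the summation range is harmless because the added indicators vanish---precisely the bookkeeping point you flagged.
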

\begin{proof}{Proof}
Consider arbitrary $i\in\{1,2\}$, $n\in\S$ and $t>0$. For notational convenience, let $k=k(n)$.

From~\eqref{eq:dynF} it follows directly that
\begin{equation}
 D_i^{t+1}(n) =\ F(n+e_i)-F(n) + \sum_{u\in\B_{k[i]}} p_{k[i],u}F^t(n+e_i+u) - \sum_{v\in\B_{k}} p_{k,v}F^t(n+v).
\end{equation}
Therefore, we need to show that
\begin{equation}
\sum_{u\in\B_{k[i]}} p_{k[i],u}F^t(n+ e_i+u) - \sum_{v\in\B_{k}} p_{k,v}F^t(n+v) = \sum_{j=1,2}\sum_{u\in\B_{k}}c_{i,k,j,u}D_j^t(n+u).
\end{equation}

The result follows by
\begin{align}
\sum_{u\in\B_{k[i]}} p_{k[i],u}F^t(n+& e_i+u) - \sum_{v\in\B_{k}} p_{k,v}F^t(n+v)  \notag \\
=& \sum_{w\in \B_k \cup (\B_{k[i]}+e_i)} \left[ \Iset{w}{\B_{k[i]}+e_i}p_{k[i],w-e_i} - \Iset{w}{\B_k}p_{k,w}  \right] F^t(n+w) \\
=&\ \sum_{\substack{w\in \B_{k}\cup (\B_{k[1]}+e_1)\\ \cup(\B_{k[2]}+e_2)}}  \left[ \Iset{w}{\B_{k[i]}+e_i}p_{k[i],w-e_i} - \Iset{w}{\B_k}p_{k,w}  \right] F^t(n+w) \label{eq:dynDs3} \\
=&\ \sum_{\substack{w\in \B_{k}\cup (\B_{k[1]}+e_1)\\ \cup(\B_{k[2]}+e_2)}} \big[ \Iset{w-e_1}{\B_{k}}c_{i,k,1,w-e_1} - \Iset{w}{\B_{k}}c_{i,k,1,w} \notag \\
  &   + \Iset{w-e_2}{\B_{k}}c_{i,k,2,w-e_2} - \Iset{w}{\B_{k}}c_{i,k,2,w} \big] F^t(n+w) \label{eq:dynDs4} \\
=&\ \sum_{j=1,2}\sum_{u\in\B_{k}}c_{i,k,j,u}\left[F^t(n+u+e_j) - F^t(n+u)\right] \label{eq:dynDs5} \\
=&\ \sum_{j=1,2}\sum_{u\in\B_{k}}c_{i,k,j,u}D_j^t(n+u), \label{eq:dynDs6}
\end{align}
where: ---~\eqref{eq:dynDs3} holds because we extend the summation over $w$ for which $\Iset{w}{\B_{k[i]}+e_i}=\Iset{w}{\B_k}=0$, ---~\eqref{eq:dynDs4} follows directly from Assumption~\ref{ass:a}, ---~\eqref{eq:dynDs5} is an immediate consequence of~\eqref{eq:dynDs4} and finally, ---~\eqref{eq:dynDs6} follows by definition of $D_j^t$. \Halmos
\end{proof}

\begin{table}
\centering
\begin{IEEEeqnarraybox}[\IEEEeqnarraystrutmode]{{eqnarrayCscript}'{eqnarrayCscript}'{eqnarrayCscript}'c'l}
{\displaystyle k }  & {\displaystyle j} & {\displaystyle u} & & c_{1,k,j,u} \\ \hline
1 & 1 &  N_1 & & p_{1,u} \\
2 & 1 & \{d_1,e_1,d_2\} & & p_{4,u} \\
2 & 1 & e_2  & & p_{4,e_2}-p_{2,d_1}+c_{1,2,1,d_1} \\
2 & 1 & 0  & & p_{4,0}-p_{2,e_1}+c_{1,2,1,e_1}  \\
2 & 1 & -e_2 & & p_{4,\meb}-p_{2,d_2}+c_{1,2,1,d_2}  \\
2 & 2 & 0  & & p_{4,\mdb}-p_{2,e_2}+c_{1,2,1,e_2} \\
2 & 2 & -e_2  & & p_{4,\mea}-p_{2,0} + c_{1,2,2,0} + c_{1,2,1,0} \\
3 & 1 & \{e_1,d_1\} & & p_{1,u} \\
3 & 1 & e_2  & & p_{1,e_2}-p_{3,d_1}+c_{1,3,1,d_1} \\
3 & 1 & 0 & & p_{1,0}-p_{3,e_1}+c_{1,3,1,e_1} \\
3 & 2 & 0 & & p_{1,\mdb}-p_{3,e_2}+c_{1,3,1,e_2} \\
4 & 1 & N_4 & & p_{4,u}
\end{IEEEeqnarraybox}
%

\vspace{5mm}
\begin{IEEEeqnarraybox}[\IEEEeqnarraystrutmode]{{eqnarrayCscript}'{eqnarrayCscript}'{eqnarrayCscript}'c'l}
{\displaystyle k}  & {\displaystyle j} & {\displaystyle u} & & c_{2,k,j,u} \\ \hline
1 & 2 &  \{d_1,e_2,\mdb\} & & p_{4,u} \\
1 & 2 &  e_1 & & p_{4,e_1}-p_{1,d_1}+c_{2,1,2,d_1} \\
1 & 2 &  0 & & p_{4,0}-p_{1,e_2}+c_{2,1,2,e_2} \\
1 & 2 &  -e_1 & & p_{4,\mea}-p_{1,\mdb}+c_{2,1,2,\mdb} \\
1 & 1 &  0 & & p_{4,d_2}-p_{1,e_1}+c_{2,1,2,e_1} \\
1 & 1 &  -e_1 & & p_{4,\meb}-p_{1,0}+c_{2,1,1,0}+c_{2,1,2,0} \\
2 & 2 &  N_2 & & p_{2,u} \\
3 & 2 &  \{d_1,e_2\} & & p_{2,u} \\
3 & 2 &  e_1 & & p_{2,e_1}-p_{3,d_1}+c_{2,3,2,d_1} \\
3 & 2 &  0 & & p_{2,0}-p_{3,e_2}+c_{2,3,2,e_2} \\
3 & 1 &  0 & & p_{2,d_2}-p_{3,e_1}+c_{2,3,2,e_1} \\
4 & 2 &  N_4 & & p_{4,u}
\end{IEEEeqnarraybox}
\caption{Values for constants $c_{i,k,j,u}$.}
\label{table:constants}
\end{table}

Next, we show how to find coefficients $c_{i,k,j,u}$ that satisfy Assumption~\ref{ass:a}. Note that the constraints given in~\eqref{eq:constantsconditionsmain} of Assumption~\ref{ass:a} can be interpreted as a flow problem in which the variable $c_{i,k,j,u}$ is the amount of flow assigned to the `edge' from $n+u$ to $n+u+e_j$ and the RHS of~\eqref{eq:constantsconditionsmain} is the demand at `vertex' $w$. It is not necessary to solve this problem for each random walk at hand. Instead, we formulate below a solution by giving values of the constants $c_{i,k,j,u}$ in terms of the transition probabilities of the random walk. The result, which states that it is possible to satisfy Assumption~\ref{ass:a} is readily verified and, therefore, stated without proof.
\begin{theorem}
If $c_{i,k,j,u}$ are chosen according to Table~\ref{table:constants} then Assumption~\ref{ass:a} holds. 
\end{theorem}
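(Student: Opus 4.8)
The plan is to verify Assumption~\ref{ass:a} directly by substituting the values from Table~\ref{table:constants} into condition~\eqref{eq:constantsconditionsmain}. Since~\eqref{eq:constantsconditionsmain} must hold for each $i\in\{1,2\}$, each $k\in\{1,\dots,4\}$, and each $w$ in the (finite) set $\B_k\cup(\B_{k[1]}+e_1)\cup(\B_{k[2]}+e_2)$, this is a finite check: for fixed $i$ there are four values of $k$, and for each $k$ at most $|\B_k|+|\B_{k[1]}+e_1|+|\B_{k[2]}+e_2|\leq 27$ relevant sites $w$, most of which give the trivial identity $0=0$. By the $1\leftrightarrow 2$ symmetry of the construction (reflecting across the diagonal swaps $e_1\leftrightarrow e_2$, $d_2\leftrightarrow -d_2$, and the roles of the two axes), it suffices to treat $i=1$; the $i=2$ table entries are obtained by the same symmetry.

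First I would fix $i=1$ and handle the two easy cases $k=1$ and $k=4$. Here $k[1]=k(n+e_1)$ equals $1$ when $k=1$ (a horizontal-axis state stays on the horizontal axis under $+e_1$) and equals $4$ when $k=4$, so $k[1]=k$ in both cases, and moreover $\B_{k[1]}+e_1\subseteq\B_k$ so the RHS of~\eqref{eq:constantsconditionsmain} collapses. With $c_{1,k,1,u}=p_{k,u}$ and $c_{1,k,2,u}=0$ (the only $j=1$ entries appear in the table, no $j=2$ entries for $k\in\{1,4\}$), the LHS telescopes to exactly $\Iset{w-e_1}{\B_k}p_{k,w-e_1}-\Iset{w}{\B_k}p_{k,w}$, matching the RHS. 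Then I would do the genuinely nontrivial cases $k=2$ and $k=3$, where $k[1]=4$ and $k[1]=1$ respectively, so the summations over $\B_k$, $\B_{k[1]}+e_1$, and $\B_{k[2]}+e_2=\B_k+e_2$ are over genuinely different neighbourhoods. For each such $k$ I would enumerate the sites $w$ one by one — ordering them roughly as the table rows are ordered, starting from the `far' corner $w=d_1$ or $w=e_1$ where only one term is present, and working inward so that each newly introduced constant $c_{1,k,j,u}$ is forced by the constants already pinned down — and check that the table value makes both sides agree. The recursive shape of the table entries (each new $c$ written as a transition-probability difference plus previously defined $c$'s) is exactly the shape one gets from solving the flow problem described before the theorem, sweeping from the boundary of the relevant region inward.

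The main obstacle is purely bookkeeping: correctly identifying, for each $(i,k)$, which of $e_1,e_2,d_1,d_2,\o,-e_1,-e_2$ lie in $\B_k$ versus $\B_{k[1]}+e_1$ versus $\B_{k[2]}+e_2$, since the indicator functions $\II{\cdot}$ in~\eqref{eq:constantsconditionsmain} switch on and off depending on membership, and a single misclassified site would break a telescoping identity. I would manage this by drawing the three shifted neighbourhoods for each $(i,k)$ (as in Figure~\ref{fig:trans}) and reading off the overlaps, then checking that the column of table values, summed against the $w$-indexed constraints, reproduces the RHS term by term. Because the verification is mechanical and the problem asserts the result is ``readily verified,'' I would present only the two representative computations ($k=2$ and $k=3$ for $i=1$) in full and remark that $k\in\{1,4\}$ are the telescoping cases above and that $i=2$ follows by diagonal symmetry. \Halmos
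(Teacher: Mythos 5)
The paper states this theorem without proof (``readily verified''), and your plan --- direct finite substitution of the table values into~\eqref{eq:constantsconditionsmain}, splitting into the trivial cases $k\in\{1,4\}$ where $k[i]=k$ and $c_{i,k,i,u}=p_{k,u}$, an inward sweep over the sites $w$ for $k\in\{2,3\}$, and the diagonal $1\leftrightarrow 2$ symmetry for $i=2$ --- is exactly the verification the paper has in mind, and it does close up (the innermost site, e.g.\ $w=-e_2$ for $(i,k)=(1,2)$, balances precisely because the transition probabilities in $\B_2$ and $\B_4$ each sum to one, a point your enumeration would surface). One small slip: for $k\in\{1,4\}$ it is \emph{not} true that $\B_{k[1]}+e_1\subseteq\B_k$ (e.g.\ $(2,0)\in(\B_1+e_1)\setminus\B_1$), but this is harmless, since with $c_{i,k,i,u}=p_{k,u}$ and all other coefficients zero the two sides of~\eqref{eq:constantsconditionsmain} agree term by term for every $w$ in the stated union regardless.
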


Note, that the values of $c_{i,k,j,u}$ as given in Table~\ref{table:constants} are not the only values for which~\eqref{eq:dynD} is satisfied. We have chosen to present Theorem~\ref{th:main} in terms of constants that can be stated concisely and that are universal in the sense that they are a simple function of the transition probabilities that define the random walk. It would be of interest to include an optimization over these constants in the optimization problems that will be stated below. However, while the constraints~\eqref{eq:dynD} themselves are linear, the overall optimization problem would be non-linear. Therefore, it is outside the scope of the current work.

Next, we present a set of linear constraints on the functions $A_i$ and $B_i$ that ensure~\eqref{eq:biasboundsrequired}.
\begin{lemma} \label{lem:biasbounds}
If $A_i:\SS\to[0,\infty)$ and $B_i:\SS\to[0,\infty)$, $i=1,2$ satisfy
\begin{gather}
F(n+e_i)-F(n) + \sum_{j=1,2}\sum_{u\in\B_{k}} \max\{-c_{i,k,j,u}A_i(n+u), c_{i,k,j,u}B_i(n+u)\} \leq B_i(n),  \label{eq:DiboundA} \\
F(n)-F(n+e_i) + \sum_{j=1,2}\sum_{u\in\B_{k}} \max\{-c_{i,j,k,u}B_i(n+u), c_{i,j,k,u}A_i(n+u)\} \leq A_i(n), \label{eq:DiboundB}
\end{gather}
for all $n\in\S$ and $k=k(n)$. Then
\begin{equation}
-A_i(n) \leq D_i^t(n) \leq B_i(n),
\end{equation}
for $i=1,2$, $n\in\S$ and $t\geq 0$.
\end{lemma}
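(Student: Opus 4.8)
The plan is to prove the two-sided bound $-A_i(n) \leq D_i^t(n) \leq B_i(n)$ by induction on $t$, using the recursion for the bias terms established in Lemma~\ref{lem:dynD}. The base case $t=0$ is immediate: by~\eqref{eq:dynF} we have $F^0 \equiv 0$, hence $D_i^0(n) = 0$ for all $n$, and since $A_i$ and $B_i$ are assumed non-negative the inequality holds. For the inductive step, I would fix $i \in \{1,2\}$, $n \in \S$, set $k = k(n)$, and assume the bound holds at time $t$ for all states; the goal is to derive it at time $t+1$ for $n$.

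For the upper bound, start from the identity
\begin{equation*}
D_i^{t+1}(n) = F(n+e_i) - F(n) + \sum_{j=1,2}\sum_{u\in\B_k} c_{i,k,j,u} D_j^t(n+u)
\end{equation*}
from Lemma~\ref{lem:dynD}. For each term in the double sum, apply the inductive hypothesis: if $c_{i,k,j,u} \geq 0$ then $c_{i,k,j,u} D_j^t(n+u) \leq c_{i,k,j,u} B_i(n+u)$ (note: here one needs the bound on $D_j^t$, but the recursion coefficients are designed so that the coupling works — I'd want to double-check whether it is $B_i$ or $B_j$ that appears, matching exactly the statement of~\eqref{eq:DiboundA}), and if $c_{i,k,j,u} < 0$ then $c_{i,k,j,u} D_j^t(n+u) \leq -c_{i,k,j,u} A_i(n+u)$. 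In both cases $c_{i,k,j,u} D_j^t(n+u) \leq \max\{-c_{i,k,j,u}A_i(n+u),\, c_{i,k,j,u}B_i(n+u)\}$. Summing over $j$ and $u$ and invoking hypothesis~\eqref{eq:DiboundA} gives $D_i^{t+1}(n) \leq B_i(n)$. The lower bound is symmetric: bound each term below by $-\max\{-c_{i,k,j,u}B_i(n+u),\, c_{i,k,j,u}A_i(n+u)\}$ and apply~\eqref{eq:DiboundB} to conclude $D_i^{t+1}(n) \geq -A_i(n)$.

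The one subtlety I would want to be careful about is the bookkeeping on indices: the recursion~\eqref{eq:dynD} expresses $D_i^{t+1}$ in terms of both $D_1^t$ and $D_2^t$ (the sum over $j$), yet the constraints~\eqref{eq:DiboundA}--\eqref{eq:DiboundB} only mention $A_i$ and $B_i$ — so the argument tacitly relies on a uniform bound $-A_i \le D_j^t \le B_i$ holding simultaneously, or on the coefficient table being structured so that whenever $j \neq i$ the relevant coefficients vanish or the $i,j$ distinction is immaterial. Resolving this — i.e.\ verifying that the induction hypothesis one actually propagates is the pair of bounds indexed consistently — is the main (and essentially only) obstacle; once the correct inductive statement is pinned down, everything else is a routine term-by-term estimate using $x \le \max\{-\text{(coeff)}\cdot a,\ \text{(coeff)}\cdot b\}$ whenever $-a \le x \le b$. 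I would state the induction hypothesis in exactly the form that makes the sum in~\eqref{eq:dynD} collapse cleanly into~\eqref{eq:DiboundA} and~\eqref{eq:DiboundB}, and then the proof is three or four lines per direction.
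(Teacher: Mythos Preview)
Your proposal is correct and follows essentially the same approach as the paper: induction on $t$, base case from $D_i^0\equiv 0$ and non-negativity of $A_i,B_i$, and the inductive step by plugging the hypothesis into the recursion of Lemma~\ref{lem:dynD} and invoking~\eqref{eq:DiboundA}--\eqref{eq:DiboundB}. The $i$ versus $j$ bookkeeping issue you flag is real and is present in the paper as well --- the paper simply states the induction hypothesis as holding ``for both $i=1$ and $i=2$'' and then bounds $c_{i,k,j,u}D_j^t(n+u)$ by $\max\{-c_{i,k,j,u}A_i(n+u),\,c_{i,k,j,u}B_i(n+u)\}$ without further comment, so this appears to be a notational slip in the lemma's statement rather than something you are missing.
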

\begin{proof}{Proof}
We use induction over $t$. Since $A_i(n)$ and $B_i(n)$ are non-negative and $D_i^0(n)=0$ the bounds hold at $t=0$. Next, assume that $A_i(n)\leq D_i^t(n)\leq B_i(n)$ for both $i=1$ and $i=2$ at $t\geq 0$. Then
\begin{IEEEeqnarray}{rCl}
D_i^{t+1}(n) &=& F(n+e_i)-F(n) + \sum_{j=1,2}\sum_{u\in\B_{\k}}c_{i,k,j,u}D_j^t(n+u) \notag \\
&\leq&  F(n+e_i)-F(n) + \sum_{j=1,2}\sum_{u\in\B_{\k(n)}}\max\{-c_{i,k,j,u}A_i(n+u), c_{i,k,j,u}B_i(n+u)\} \notag \\
&\leq& B_i(n),
\end{IEEEeqnarray}
where the first equality follows from Lemma~\ref{lem:dynD}, the first inequality from the induction hypothesis and the last inequality from~\eqref{eq:DiboundA}. In the other direction we have
\begin{IEEEeqnarray}{rCl}
D_i^{t+1}(n) &\geq&  F(n+e_i)-F(n) + \sum_{j=1,2}\sum_{u\in\B_{\k(n)}}\min\{-c_{i,j,k,u}A_i(n+u), c_{i,j,k,u}B_i(n+u)\} \notag \\
&\geq& -A_i(n),
\end{IEEEeqnarray}
which follows from Lemma~\ref{lem:dynD}, the induction hypothesis and~\eqref{eq:DiboundB}. \Halmos
\end{proof}

\subsection{Main result: An error bound without bias terms} \label{ssec:errorbound}
Combining the results from the previous subsections leads to the following optimization problem. Like Problem~\ref{pr:stepa}, this problem provides an upper bound on $\FF$. A precise formulation of this result is given below.
\begin{problem} \label{pr:stepb}
\begin{align}
 \text{minimize}\ &\sum_{n\in\S}\left[\bar F(n) + G(n)\right]\bar \pi(n), \label{eq:stepbobj} \\
\text{subject to}\
  &\bigg| \bar F(n) - F(n) + \sum_{i=1,2}\left(q_{k(n),e_i}E_i(n) + q_{k(n),\mei}E_i(n-e_i) \right)\bigg| \leq G(n), \label{eq:stepbfirst} \\[.4em]
  & -A_i(n) \leq E_i(n) \leq B_i(n), \label{eq:stepbsecond} \\[.5em]
  &  F(n+e_i)-F(n)  + \sum_{j=1,2}\sum_{u\in\mathrlap{\B_{k(n)}}} \max\{-c_{i,k(n),j,u}A_i(n+u), c_{i,k(n),j,u}B_i(n+u)\} \leq B_i(n), \label{eq:stepbbiasbound1}  \\[.4em]
  & F(n)-F(n+e_i) + \sum_{j=1,2}\sum_{u\in\mathrlap{\B_{k(n)}}} \max\{-c_{i,j,k(n),u}B_i(n+u), c_{i,j,k(n),u}A_i(n+u)\}\leq A_i(n), \label{eq:stepbbiasbound2} \\[.4em]
  & \bar F(n)\geq 0, G(n)\geq 0, A_i(n)\geq 0, B_i(n)\geq 0,\qquad\text{for }n\in\S, i\in\{1,2\}. \label{eq:stepblast}
\end{align}
\end{problem}

The next theorem provides the main contribution of the current paper. As indicated at the start of this section, we will give the generalized result for the case that $R$ and $\bar R$ can have non-equal transition probabilities for transitions in arbitrary directions in Section~\ref{sec:generalize}.
\begin{theorem} \label{th:main}
Let $q_{k,u}=0$ if $u\not\in\{-e_1,e_1,-e_2,e_2,0\}$. Finally,
let $\FF^*$ denote the optimal value of Problem~\ref{pr:stepb}. Then $\FF\leq\FF^*$.
\end{theorem}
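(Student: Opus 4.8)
The statement is essentially a chain of three reductions, each of which has already been set up in the preceding subsections, so the proof is a matter of linking them together and invoking the relevant lemmas. The plan is to show that any feasible point of Problem~\ref{pr:stepb} yields, via Theorem~\ref{th:error}, a valid upper bound on $\FF$, and that its objective value equals the right-hand bound in Theorem~\ref{th:error}; taking the infimum then gives $\FF\le\FF^*$.

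\textbf{Step 1: from a feasible point of Problem~\ref{pr:stepb} to bias-term bounds.} Take any $\bar F, G, A_i, B_i, E_i$ feasible for Problem~\ref{pr:stepb}. Constraints~\eqref{eq:stepbbiasbound1} and~\eqref{eq:stepbbiasbound2} are exactly the hypotheses~\eqref{eq:DiboundA} and~\eqref{eq:DiboundB} of Lemma~\ref{lem:biasbounds} (with $k=k(n)$), so that lemma applies and gives $-A_i(n)\le D_i^t(n)\le B_i(n)$ for all $n\in\S$, $i\in\{1,2\}$, $t\ge 0$. Here one tacitly uses that the coefficients $c_{i,k,j,u}$ satisfying Assumption~\ref{ass:a} exist — this is Theorem~2 via Table~\ref{table:constants} — so that Lemma~\ref{lem:dynD}, on which Lemma~\ref{lem:biasbounds} rests, is available.

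\textbf{Step 2: from bias-term bounds to a valid $G$ for Theorem~\ref{th:error}.} By~\eqref{eq:stepbsecond} we have $-A_i(n)\le E_i(n)\le B_i(n)$, and combining this with Step~1 we get, for every $n$ and every $t\ge 0$,
\begin{equation*}
\Big| \sum_{i=1,2}\big(q_{k(n),e_i}(D_i^t(n)-E_i(n)) + q_{k(n),\mei}(D_i^t(n-e_i)-E_i(n-e_i))\big)\Big| \le \sum_{i=1,2}\big(|q_{k(n),e_i}|(A_i(n)+B_i(n)) + |q_{k(n),\mei}|(A_i(n-e_i)+B_i(n-e_i))\big).
\end{equation*}
Wait — this crude bound is not quite what is needed; the cleaner route is to observe directly that since $E_i(n)$ lies in $[-A_i(n),B_i(n)]$ and so does $D_i^t(n)$, and the constraint~\eqref{eq:stepbfirst} holds with $E_i$ in place of $D_i^t$, the replacement argument used in passing from Problem~\ref{pr:minstart} to Problem~\ref{pr:stepa} shows that~\eqref{eq:stepbfirst} together with~\eqref{eq:stepbsecond} and Step~1 implies the constraint~\eqref{eq:constrtherror} of Theorem~\ref{th:error}. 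Concretely: the left-hand side of~\eqref{eq:constrtherror}, using $D^t_\o\equiv 0$ and $D^t_{\mei}(n)=-D^t_{e_i}(n-e_i)$, equals $|\bar F(n)-F(n)+\sum_i(q_{k(n),e_i}D_i^t(n)+q_{k(n),\mei}D_i^t(n-e_i))|$, and since each $D_i^t(\cdot)$ can be freely chosen within $[-A_i,B_i]$ from the point of view of the bound, the worst case over $t$ is dominated by the worst case over $E_i\in[-A_i,B_i]$ constrained by~\eqref{eq:stepbfirst}; hence $G(n)$ is a valid bound. Thus $\bar F$ and $G$ satisfy~\eqref{eq:constrtherror}.

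\textbf{Step 3: conclude via Theorem~\ref{th:error} and take the infimum.} Since $\bar F\ge 0$, $G\ge 0$ and~\eqref{eq:constrtherror} holds, Theorem~\ref{th:error} gives $\FF\le \sum_{n\in\S}[\bar F(n)+G(n)]\bar\pi(n)$, which is precisely the objective~\eqref{eq:stepbobj} evaluated at our feasible point. As this holds for every feasible point, $\FF\le\FF^*$, where $\FF^*$ is the optimal (infimum) value of Problem~\ref{pr:stepb}. \Halmos

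\textbf{Where the real work is.} The statement itself is a short bookkeeping argument; the substance lies entirely in the machinery already proved — Lemma~\ref{lem:dynD} (the recursion~\eqref{eq:dynD} for the bias terms, justified by Assumption~\ref{ass:a}), the existence of universal coefficients (Theorem~2 / Table~\ref{table:constants}), and Lemma~\ref{lem:biasbounds} (the inductive propagation of the bounds $A_i,B_i$). The one point requiring a little care in writing the proof cleanly is Step~2: making precise that replacing the unknown $D_i^t$ by the auxiliary $E_i$ ranging over $[-A_i,B_i]$ only tightens the constraint, so that feasibility of Problem~\ref{pr:stepb} implies~\eqref{eq:constrtherror} rather than merely something weaker. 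This is exactly the ``making the constraints more stringent'' remark made when Problem~\ref{pr:stepa} was introduced, and it should be invoked rather than re-derived.
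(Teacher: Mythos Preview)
Your proposal is correct and follows essentially the same three-step skeleton as the paper's own proof: invoke Lemma~\ref{lem:biasbounds} (built on Lemma~\ref{lem:dynD} and Assumption~\ref{ass:a}) via constraints~\eqref{eq:stepbbiasbound1}--\eqref{eq:stepbbiasbound2} to get $-A_i\le D_i^t\le B_i$; then use~\eqref{eq:stepbfirst}--\eqref{eq:stepbsecond} together with $q_{k,u}=0$ off the unit directions to recover~\eqref{eq:constrtherror}; then apply Theorem~\ref{th:error}. The paper's proof is terser---it simply asserts Step~2 without unpacking the $E_i$ replacement---whereas you spell out (after your false start) why the auxiliary $E_i$ ranging over $[-A_i,B_i]$ dominates the worst case over $t$; this is exactly the ``more stringent'' remark preceding Problem~\ref{pr:stepa}, and your instinct to invoke it rather than re-derive it is right.
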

\begin{proof}{Proof:}
By Lemmas~\ref{lem:dynD} and~\ref{lem:biasbounds} and constraints~\eqref{eq:stepbbiasbound1} and~\eqref{eq:stepbbiasbound2} it follows that 
\begin{equation}
   -A_i(n) \leq D_i(n) \leq B_i(n).
\end{equation}
Now it follows from constraints~\eqref{eq:stepbfirst} and~\eqref{eq:stepbsecond} and from the fact that $q_{k,u}=0$ if $u\not\in\{-e_1,e_1,-e_2,e_2,0\}$ that
\begin{equation}
\Big| \bar F(n) - F(n) + \sum_{u\in\B_{k(n)}}q_{k(n),u}D_u^t(n) \Big| \leq G(n).
\end{equation}
Finally, the result follows from Theorem~\ref{th:error}. \Halmos
\end{proof}

\subsection{A finite linear program} \label{ssec:finite}
The final step is to reduce Problem~\ref{pr:stepb} to a linear program with a finite number of variables and a finite number of constraints. In the remainder, we will refer to such a linear program as a finite linear program. So far, besides constraints~\eqref{eq:stepbfirst}--\eqref{eq:stepblast} we have not put any restrictions on the functions $\bar F, G, A_1, A_2, B_1$ and $B_2$. In the most general case, each of these functions is specificied by one variable for each element in the state space, \ie we have a linear program with countably many variables. Next, we put additional constraints on these functions, such that the total number of variables is finite. Recall from Section~\ref{sec:prelim} that the performance measures that we consider are induced by componentwise linear functions, \ie $F$ is $\C$-linear. Moreover, the transition probabilities of $R$ and $\bar R$ are homogeneous within each component. Therefore, we restrict our attention to functions $\bar F, G, A_1, A_2, B_1$ and $B_2$ that are $\C$-linear.

A $\C$-linear function can be specified by means of $8$ coefficients, see~\eqref{eq:clinear}.
We will demonstrate below that constraints~\eqref{eq:stepbfirst}--\eqref{eq:stepblast} are equivalent to a finite number of linear constraints in the coefficients that define the $\C$-linear functions $\bar F, G, A_1, A_2, B_1$ and $B_2$. In addition we show that objective function~\eqref{eq:stepbobj} is linear in these coefficients.

Before giving a complete description of the reduction to a finite number of constraint we give an overview of the main ideas. The key idea that enables reduction to a finite number of constraints is that each of the constraints~\eqref{eq:stepbfirst}--\eqref{eq:stepblast} can be reformulated as a sign constraint on a function from a class that will be specified below. We will see that in this class of functions, sign constraints are equivalent to a finite number of linear constraints. To illustrate the idea, we give an example for linear function $h:\SS\to\RR$, $h(n)=h_0+h_1 n_1+h_2 n_2$, for which the condition $h(n)\geq 0$ for all $n\in\SS$ is obviously equivalent to the three constraints $h_0\geq 0$, $h_1\geq 0$ and $h_2\geq 0$.

\begin{figure}
\centering
\begin{tikzpicture}[scale=1]
\setcounter{cA}{3}
\setcounter{cB}{3}
\setlength{\lA}{4mm}
\setlength{\lB}{3cm}
\setlength{\lC}{1.5mm}
\tikzstyle{vertex}=[draw,circle,fill,color=black!30!white,outer sep=0pt, inner sep=0pt, minimum size=3pt]
\tikzstyle{part}=[rounded corners]
\tikzstyle{partlabel}=[anchor=west,xshift=-\lA+1mm,yshift=-\lC,inner sep=0pt, outer sep=0pt]
\foreach \x in {0,...,\thecA}
 	\foreach \y in {0,...,\thecB}
 		\node[vertex] (x\x y\y) at (\x,\y) {}; 
\draw[part] (-\lA,-\lA) rectangle (\lA,\lA);
\draw[part] (-\lA+1cm,-\lA) rectangle (\lA+1cm,\lA);
\draw[part] (-\lA,-\lA+1cm) rectangle (\lA,\lA+1cm);
\draw[part] (-\lA+1cm,-\lA+1cm) rectangle (\lA+1cm,\lA+1cm);
\draw[part] (\lA+\lB,-\lA+1cm) -- (-\lA+2cm,-\lA+1cm) -- (-\lA+2cm,\lA+1cm) -- (\lA+\lB,\lA+1cm);
\draw[part] (-\lA+1cm,\lA+\lB) -- (-\lA+1cm,-\lA+2cm) -- (\lA+1cm,-\lA+2cm) -- (\lA+1cm,\lA+\lB);
\draw[part] (\lA+\lB,-\lA) -- (-\lA+2cm,-\lA) -- (-\lA+2cm,\lA) -- (\lA+\lB,\lA);
\draw[part] (-\lA,\lA+\lB) -- (-\lA,-\lA+2cm) -- (\lA,-\lA+2cm) -- (\lA,\lA+\lB);
\draw[part] (\lA+\lB,-\lA+2cm) -- (-\lA+2cm,-\lA+2cm) -- (-\lA+2cm,\lA+\lB);
\node[partlabel] at (0,0) {\fm{\T_1}};
\node[partlabel] at (1,0) {\fm{\T_2}};
\node[partlabel] at (2.5,0) {\fm{\T_3}};
\node[partlabel] at (0,1) {\fm{\T_4}};
\node[partlabel] at (1,1) {\fm{\T_5}};
\node[partlabel] at (2.5,1) {\fm{\T_6}};
\node[partlabel] at (0,2.5) {\fm{\T_7}};
\node[partlabel] at (1,2.5) {\fm{\T_8}};
\node[partlabel] at (2.5,2.5) {\fm{\T_9}};
 \end{tikzpicture}	
\caption{The $\T$ partition of $\S$.}
\label{fig:partitionT}
\end{figure}
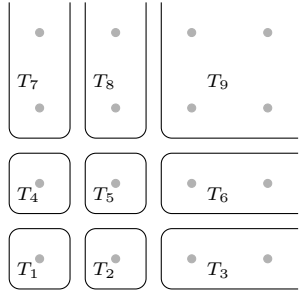

The linear function from the previous examples captures most of the characteristics from the general case. However, the class of linear functions is not rich enough for our purposes. Since we start with functions that are componentwise linear over the $\C$ partition, it is obvious that we need to consider at least componentwise linear functions. The $\C$ partition is, however, not fine enough. Indeed, if $A_i(n)$ is $\C$-linear, then $A_i(n+u)$, as occuring in, \eg \eqref{eq:stepbbiasbound1},  is not $\C$-linear. Therefore, we introduce a finer partition of the state space on which $A_i(n+u)$ is componentwise linear. We will show below that all functions that we need to consider are componentwise linear over this finer partition. Let 
\begin{equation}
\begin{array}{l@{\ }l@{\ }l}
\T_1=\{(0,0)\},&
\T_4=\{(0,1)\},&
\T_7=\{0\}\!\!\times\!\!\{2,3,\dots\},\\
\T_2=\{(1,0)\},&
\T_5=\{(1,1)\},&
\T_8=\{1\}\!\!\times\!\!\{2,3,\dots\},\\
\T_3=\{2,3,\dots\}\!\!\times\!\!\{0\}, &
\T_6=\{2,3,\dots\}\!\!\times\!\!\{1\}, &
\T_9=\{2,3,\dots\}\!\!\times\!\!\{2,3,\dots\},
\end{array}
\end{equation}
as illustrated in Figure~\ref{fig:partitionT}. In accordance with the definition for the $\C$ partition, let $\t:\S\to\{1,\dots,9\}$ be defined through $n\in\T_{t(n)}$. We refer to functions that are linear in each of the sets $\T_1,\dots,\T_9$ as $\T$-linear. A $\T$-linear function $h:\S\to\mathbb{R}$ is defined through a set of coefficients $h_{t,i}$, $1\leq t\leq 9$, $i=0,1,2$, \ie
\begin{equation}
 h(n) = h_{t(n),0} +h_{t(n),1}\na + h_{t(n),2}\nb.
\end{equation}

Next, we present three simple results, the proofs of which are straightforward and omitted. 
\begin{lemma} \label{lem:Cshift}
Let $H:\S\to\RR$ be $\C$-linear and let $u\in\N$. Define $\hat H:\SS\to\RR$ as $\hat H(n)=H(n+u)$, if $n+u\in\SS$, and $\hat H(n)=0$ otherwise. Then $\hat H$ is a $\T$-linear function.
\end{lemma}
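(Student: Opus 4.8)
The plan is to verify directly that $\hat H$ is linear when restricted to each of the nine sets $\T_1,\dots,\T_9$. The key observation is that $\C$-linearity of $H$ means that $H$ agrees with a single affine function of $(n_1,n_2)$ on each of the four components $\C_1,\dots,\C_4$; hence, for a fixed shift $u\in\N$, the map $n\mapsto H(n+u)$ agrees with a single affine function on every set on which $n+u$ stays inside one fixed component of the $\C$ partition. So the first step is to record, for each $u\in\{-1,0,1\}^2$ and each $\T_t$, which component $\C_{k}$ the translated set $\T_t+u$ lands in (or whether it leaves $\S$). The $\T$ partition was constructed precisely so that this is well-defined: each $\T_t$ is a ``corner cell'' ($\T_1,\T_2,\T_4,\T_5$), a ``horizontal strip'' ($\T_3,\T_6$), a ``vertical strip'' ($\T_7,\T_8$), or the ``bulk'' ($\T_9$), and translating any one of these by a unit vector keeps it within a single $\C$-component.

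Concretely, I would argue as follows. Fix $t\in\{1,\dots,9\}$ and $u\in\N$. If there exists $n\in\T_t$ with $n+u\notin\S$, then by the geometry of the cells either $n+u\notin\S$ for \emph{all} $n\in\T_t$ (this happens for the single-point and strip cells adjacent to the boundary), in which case $\hat H\equiv 0$ on $\T_t$ and is trivially linear there; or $\T_t=\T_9$ and only finitely many (in fact, a lower-dimensional sub-strip of) points are sent outside --- but since $\T_9=\{2,3,\dots\}^2$ and $u\in\{-1,0,1\}^2$, we have $n+u\in\{1,2,\dots\}^2=\C_4\cup(\text{part of }\C_1,\C_2)$... actually $n+u\in\{1,2,\dots\}^2\subseteq\S$ always, so no points leave $\S$ from $\T_9$. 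Thus the only cells from which points can leave $\S$ are those touching the axes, and for those the entire cell leaves simultaneously for the offending $u$. In the remaining case $n+u\in\S$ for all $n\in\T_t$, one checks that $\{n+u:n\in\T_t\}$ is contained in a single component $\C_{k}$; then for $n\in\T_t$ we have $\hat H(n)=H(n+u)=h_{k,0}+h_{k,1}(n_1+u_1)+h_{k,2}(n_2+u_2)$ in the notation of \eqref{eq:clinear} (with the appropriate vanishing of coefficients when $k\in\{1,2,3\}$), which is manifestly an affine function of $(n_1,n_2)$ on $\T_t$. Hence $\hat H$ is $\T$-linear.

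The one point requiring a little care --- and the place I expect the only real bookkeeping --- is confirming that the $\T$ partition is genuinely fine enough, i.e.\ that for \emph{every} $u\in\N$ and \emph{every} $\T_t$, the set $(\T_t+u)\cap\S$ meets only one component of the $\C$ partition. This is a finite check over $9\times 9=81$ pairs $(t,u)$, but it is entirely mechanical and can be read off Figure~\ref{fig:partitionT}: the cells $\T_1,\T_2,\T_4,\T_5$ are singletons (hence trivially land in one component), the strips $\T_3,\T_6,\T_7,\T_8$ and the bulk $\T_9$ each have the property that a unit translate stays on one side of each coordinate axis. Since the statement explicitly says the proof is ``straightforward and omitted,'' in the paper I would simply assert this after indicating the $\C$-linearity-plus-translation mechanism; for a self-contained write-up I would include the small table of which $\C$-component each $\T_t+u$ falls into.
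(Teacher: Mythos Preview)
Your argument is correct and is exactly the verification the paper has in mind; indeed the paper omits the proof entirely as ``straightforward,'' and the direct check that each translated cell $\T_t+u$ either leaves $\S$ entirely or lands in a single $\C$-component is the natural (and essentially only) route. Your write-up wanders a bit in the middle paragraph when you momentarily worry about $\T_9$, but the conclusion you reach there---that $\T_9+u\subseteq\{1,2,\dots\}^2=\C_4$ for every $u\in\N$---is right, and together with the observation that the boundary strips $\T_3,\T_7$ leave $\S$ uniformly whenever any of their points do, the case analysis is complete.
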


\begin{lemma} \label{lem:Hfinite}
Let $H:\S\to\RR$ be $\T$-linear, $H(n) = h_{t(n),0} +h_{t(n),1}\na + h_{t(n),2}\nb$. Then $H(n)\geq 0$ for all $n\in\S$ if and only if the coefficients $h_{t,i}$ satisfy the linear constraints 
\begin{IEEEeqnarray*}{rCl'rCl+rCl'rCl} 
h_{1,0}&\geq&0, &   &&   & h_{2,0}+h_{2,1}&\geq& 0, &   &&  \\
h_{3,0}+2h_{3,1}&\geq& 0, & h_{3,1}&\geq& 0, & h_{4,0}+h_{4,2}&\geq& 0, & && \\
h_{5,0}+h_{5,1}+h_{5,2}&\geq&0, & && & h_{6,0}+2h_{6,1}+h_{6,2}&\geq& 0, & h_{6,1}&\geq& 0, \\
h_{7,0}+2h_{7,2}&\geq& 0, & h_{7,2}&\geq& 0,  & h_{8,0}+h_{8,1}+2h_{8,2}&\geq& 0, & h_{8,2}&\geq& 0,
\end{IEEEeqnarray*}
and 
\begin{equation*}
h_{9,0}+2h_{9,1}+2h_{9,2}\geq 0,\quad h_{9,1}\geq 0,\quad h_{9,2}\geq 0. 
\end{equation*}
\end{lemma}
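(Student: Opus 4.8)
The plan is to exploit the product structure of the nine cells of the $\T$ partition: each of $\T_1,\dots,\T_9$ is a Cartesian product of either a single integer point or an unbounded integer ray in the first coordinate with either a point or a ray in the second coordinate. Since $H$ is affine on each cell and the cells partition $\S$, non-negativity of $H$ on $\S$ is equivalent to the conjunction, over the nine cells, of non-negativity of an affine function on such a product set; each of these will be shown to be equivalent to the listed linear constraints on the $h_{t,i}$.

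First I would record two elementary facts. (i) An affine function of one integer variable, $\phi(m)=\alpha+\beta m$, satisfies $\phi(m)\geq 0$ for all integers $m\geq m_0$ if and only if $\beta\geq 0$ and $\alpha+\beta m_0\geq 0$: for necessity note $\beta=\phi(m_0+1)-\phi(m_0)$, and $\beta<0$ would force $\phi(m)\to-\infty$, while $\phi(m_0)\geq 0$ is immediate; for sufficiency write $\phi(m)=\phi(m_0)+\beta(m-m_0)\geq 0$. (ii) The two-dimensional analogue, obtained by applying (i) in each coordinate separately: an affine function $\psi(\na,\nb)=\alpha+\beta \na+\gamma \nb$ satisfies $\psi\geq 0$ on $\{m_1,m_1+1,\dots\}\times\{m_2,m_2+1,\dots\}$ if and only if $\beta\geq 0$, $\gamma\geq 0$ and $\alpha+\beta m_1+\gamma m_2\geq 0$ — necessity follows by fixing one coordinate at its minimal value and invoking (i), and sufficiency from $\psi(\na,\nb)=\psi(m_1,m_2)+\beta(\na-m_1)+\gamma(\nb-m_2)$.

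Then I would run through the nine cells. For the singletons $\T_1=\{(0,0)\}$, $\T_2=\{(1,0)\}$, $\T_4=\{(0,1)\}$, $\T_5=\{(1,1)\}$, non-negativity of $H$ on the cell is the single inequality obtained by substituting the point into $h_{t,0}+h_{t,1}\na+h_{t,2}\nb$, giving $h_{1,0}\geq 0$, $h_{2,0}+h_{2,1}\geq 0$, $h_{4,0}+h_{4,2}\geq 0$, $h_{5,0}+h_{5,1}+h_{5,2}\geq 0$. For the horizontal rays $\T_3$ and $\T_6$, substituting $\nb\in\{0,1\}$ reduces $H$ to an affine function of $\na$ on $\{2,3,\dots\}$, and fact (i) with $m_0=2$ yields $h_{3,1}\geq 0,\ h_{3,0}+2h_{3,1}\geq 0$ and $h_{6,1}\geq 0,\ h_{6,0}+2h_{6,1}+h_{6,2}\geq 0$. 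Symmetrically, the vertical rays $\T_7$ and $\T_8$ give $h_{7,2}\geq 0,\ h_{7,0}+2h_{7,2}\geq 0$ and $h_{8,2}\geq 0,\ h_{8,0}+h_{8,1}+2h_{8,2}\geq 0$. Finally, for $\T_9=\{2,3,\dots\}\times\{2,3,\dots\}$, fact (ii) with $m_1=m_2=2$ yields $h_{9,1}\geq 0$, $h_{9,2}\geq 0$, $h_{9,0}+2h_{9,1}+2h_{9,2}\geq 0$. Conjoining the nine equivalences gives the stated characterization.

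There is no deep obstacle here; the only points requiring care are stating the one- and two-dimensional reductions (i)–(ii) cleanly enough that the per-cell verification is genuinely mechanical, and evaluating the constant term at the correct corner of each unbounded cell — namely $(2,0)$ for $\T_3$, $(2,1)$ for $\T_6$, $(0,2)$ for $\T_7$, $(1,2)$ for $\T_8$, and $(2,2)$ for $\T_9$ — so that the coefficients in the resulting inequalities come out exactly as listed.
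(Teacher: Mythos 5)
Your argument is correct; the paper itself omits the proof of this lemma as ``straightforward,'' and your cell-by-cell reduction --- evaluating $H$ at the singleton cells and applying the elementary fact that an affine function is non-negative on an integer ray (or product of rays) if and only if its slope coefficients are non-negative and its value at the minimal corner is non-negative --- is exactly the intended mechanical verification. All nine cases, including the corner points $(2,0)$, $(2,1)$, $(0,2)$, $(1,2)$, $(2,2)$ for the unbounded cells, match the listed constraints.
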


\begin{lemma} \label{lem:objlinear}
If $H:\S\to\RR$ is $\C$-linear, with $H(n) = h_{t(n),0} +h_{t(n),1}\na + h_{t(n),2}\nb$, and $\bar\pi(n)=\prod_{i=1,2}(1-\ri)\ri^{\ni}$, then 
\begin{multline}
\sum_{n\in\S} H(n)\bar\pi(n) =h_{3,0}(1-r_1)(1-r_2) + r_1(1-r_2)\left(h_{1,0}+\frac{h_{1,1}}{1-r_1}\right)  \\
+ (1-r_1)r_2\left(h_{2,0}+\frac{h_{2,2}}{1-r_2}\right) +  r_1r_2\left(h_{4,0}+\frac{h_{4,1}}{1-r_1}+\frac{h_{4,2}}{1-r_2}\right).
\end{multline}
\ie $\sum_{n\in\S} H(n)\bar\pi(n)$ is a linear function in the variables $h_{t,i}$.
\end{lemma}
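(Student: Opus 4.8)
\textbf{Proof plan for Lemma~\ref{lem:objlinear}.}

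The plan is to compute $\sum_{n\in\S}H(n)\bar\pi(n)$ directly by splitting the sum over the four components $\C_1,\dots,\C_4$ of the $\C$ partition, using that $H$ is $\C$-linear and $\bar\pi$ factors as a product of geometrics. First I would write $\sum_{n\in\S}H(n)\bar\pi(n) = \sum_{k=1}^{4}\sum_{n\in\C_k}H(n)\bar\pi(n)$, and on each $\C_k$ replace $H$ by its linear expression on that component, namely $h_{k,0}+h_{k,1}n_1+h_{k,2}n_2$ with the appropriate coefficients set to zero (so $h_{1,2}=h_{2,1}=h_{3,1}=h_{3,2}=0$), exactly as in~\eqref{eq:clinear}. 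Note that the lemma statement indexes $H$ by $t(n)$, but since the claimed formula only involves the coefficients $h_{1,\cdot},h_{2,\cdot},h_{3,\cdot},h_{4,\cdot}$, which coincide with the $\C$-linear coefficients, the indexing is harmless; I would simply use the $\C$-linear representation throughout.

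Next I would evaluate the four sub-sums using the elementary identities $\sum_{m\geq 0}(1-r)r^m = 1$ and $\sum_{m\geq 1}(1-r)r^m = r$ and $\sum_{m\geq 1}m(1-r)r^m = r/(1-r)$, together with the factorization $\bar\pi(n)=(1-r_1)r_1^{n_1}(1-r_2)r_2^{n_2}$. Concretely: on $\C_3=\{(0,0)\}$ the contribution is $h_{3,0}(1-r_1)(1-r_2)$; on $\C_1=\{1,2,\dots\}\times\{0\}$ summing $(h_{1,0}+h_{1,1}n_1)(1-r_1)r_1^{n_1}(1-r_2)$ over $n_1\geq 1$ gives $r_1(1-r_2)\big(h_{1,0}+h_{1,1}/(1-r_1)\big)$ after pulling out $r_1$ from the geometric part and using $\sum_{n_1\geq 1}n_1(1-r_1)r_1^{n_1} = r_1/(1-r_1)$; the $\C_2$ term is symmetric; and on $\C_4=\{1,2,\dots\}^2$ the sum of $(h_{4,0}+h_{4,1}n_1+h_{4,2}n_2)$ against $\bar\pi$ restricted to $n_1,n_2\geq 1$ factorizes into $r_1 r_2\big(h_{4,0}+h_{4,1}/(1-r_1)+h_{4,2}/(1-r_2)\big)$. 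Adding the four contributions yields the displayed formula. Linearity in the coefficients $h_{t,i}$ is then immediate, since each coefficient appears multiplied only by a constant depending on $r_1,r_2$.

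Since the paper marks this proof as straightforward and omits it, there is no real obstacle; the only point requiring minor care is the bookkeeping of which geometric-sum identity applies on which component (the index ranges start at $1$ on the axes and interior but at $0$ at the origin), and ensuring the moment identity $\sum_{m\geq 1}m(1-r)r^m = r/(1-r)$ is applied with the correct range. Convergence is guaranteed throughout because $r_1,r_2\in(0,1)$, so all the series converge absolutely and the interchange of summation (splitting over components, and factorizing the double sum on $\C_4$) is justified.
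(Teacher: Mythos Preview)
Your proposal is correct and is exactly the straightforward computation the paper has in mind; the paper explicitly omits the proof of this lemma as routine. Your remark about the indexing (the statement writes $t(n)$ where one would expect $k(n)$ for a $\C$-linear function) is also apt, and you handle it correctly by working with the $\C$-linear representation throughout.
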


From Lemmas~\ref{lem:Cshift}--\ref{lem:objlinear} it is clear that Problem~\ref{pr:stepb} can be reduced to a finite linear program by imposing the additional constraint that the functions $\bar F, G, A_1, A_2, B_1$ and $B_2$ are $\C$-linear. The formal result is presented below for completeness.
\begin{theorem} \label{th:finite}
Problem~\ref{pr:stepb}, with the additional constraint that $\bar F, G, A_1, A_2, B_1$ and $B_2$ are $\C$-linear, is a finite linear program.
\end{theorem}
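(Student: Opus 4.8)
The plan is to show that each of the constraints \eqref{eq:stepbfirst}--\eqref{eq:stepblast} of Problem~\ref{pr:stepb}, once we require $\bar F, G, A_1, A_2, B_1, B_2$ to be $\C$-linear, becomes a finite collection of linear inequalities in the $48$ defining coefficients (eight per function), and that the objective \eqref{eq:stepbobj} is a linear functional of these coefficients. Since a linear program with finitely many variables and finitely many linear constraints is by definition a finite linear program, this suffices. The reductions are all applications of Lemmas~\ref{lem:Cshift}--\ref{lem:objlinear}, so the proof is essentially a bookkeeping argument; I would organize it constraint by constraint.

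First I would treat the objective. Both $\bar F$ and $G$ are $\C$-linear, hence so is $\bar F + G$, and Lemma~\ref{lem:objlinear} gives $\sum_{n\in\S}[\bar F(n)+G(n)]\bar\pi(n)$ as an explicit linear expression in the coefficients of $\bar F$ and $G$; this handles \eqref{eq:stepbobj}. Next, the sign constraints \eqref{eq:stepblast} ($\bar F(n)\geq0$, etc.) are each of the form ``a $\C$-linear — hence \emph{a fortiori} $\T$-linear — function is nonnegative on $\S$'', so Lemma~\ref{lem:Hfinite} converts each into the listed finite set of linear inequalities in its coefficients.

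The remaining constraints \eqref{eq:stepbfirst}--\eqref{eq:stepbbiasbound2} involve shifted arguments $E_i(n-e_i)$, $A_i(n+u)$, $B_i(n+u)$ with $u\in\B_{k(n)}$, and possibly a maximum of two linear expressions. The key observation is that by Lemma~\ref{lem:Cshift}, for a $\C$-linear $H$ and a fixed shift $u\in\N$ the function $n\mapsto H(n+u)$ (extended by $0$ off $\S$) is $\T$-linear; and the $\T$ partition is common to all the shifts that occur (this is precisely why it was introduced). Moreover $F$ and $F(\cdot+e_i)$ are $\T$-linear (Lemma~\ref{lem:Cshift} again, noting $\C$-linear implies $\T$-linear since $\T$ refines $\C$), and $q_{k(n),u}$ is constant on each $\C_k$ and hence on each $\T_t$. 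Therefore the inner expression of \eqref{eq:stepbfirst}, say $P(n) = \bar F(n) - F(n) + \sum_i(q_{k(n),e_i}E_i(n) + q_{k(n),\mei}E_i(n-e_i))$, is a finite sum of $\T$-linear functions with coefficients that are linear in the variables, hence $\T$-linear; and $|P(n)|\leq G(n)$ is equivalent to the two sign constraints $G(n)-P(n)\geq0$ and $G(n)+P(n)\geq0$, each a $\T$-linear function required to be nonnegative on $\S$, so Lemma~\ref{lem:Hfinite} applies. For \eqref{eq:stepbsecond}, $B_i(n)-E_i(n)\geq0$ and $E_i(n)+A_i(n)\geq0$ are likewise $\T$-linear sign constraints (here $E_i$ is $\C$-linear, so $\T$-linear). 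The only genuine wrinkle is the $\max\{\cdot,\cdot\}$ in \eqref{eq:stepbbiasbound1}--\eqref{eq:stepbbiasbound2}: I would eliminate it in the standard way by introducing, for each relevant triple $(i,k,u)$, an auxiliary $\T$-linear function $M_{i,k,u}$ together with the two linear constraints $M_{i,k,u}(n)\geq -c_{i,k,j,u}A_i(n+u)$ and $M_{i,k,u}(n)\geq c_{i,k,j,u}B_i(n+u)$ on $\S$ (again $\T$-linear sign constraints via Lemma~\ref{lem:Hfinite}), and replace the $\max$ term by $M_{i,k,u}(n)$. Because we are \emph{minimizing} and the $M$'s enter the constraints on $B_i$ (and $A_i$) through a $\leq$ with the $M$'s on the larger side, at optimum each $M_{i,k,u}$ is driven down to the pointwise maximum, so the relaxation is exact; this is worth a sentence of justification. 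After this substitution \eqref{eq:stepbbiasbound1} reads ``$B_i(n) - F(n+e_i)+F(n) - \sum_{j,u}M_{i,k(n),j,u}(n)\geq0$'', a $\T$-linear sign constraint, and similarly for \eqref{eq:stepbbiasbound2}.

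The main obstacle — really the only thing requiring care rather than routine — is verifying that every shifted function appearing in \eqref{eq:stepbfirst}--\eqref{eq:stepbbiasbound2} is genuinely $\T$-linear \emph{on all of $\S$ simultaneously}, i.e. that the single $\T$ partition of Figure~\ref{fig:partitionT} is fine enough to accommodate all shifts $u\in\B_{k(n)} = \{-1,0,1\}^2$ restricted to the various components. Concretely, one must check that for $n$ ranging over a fixed set $\T_t$ and $u$ a fixed neighbour, $k(n+u)$ is constant — so that the $\C$-linear $H$ restricted along the shifted argument uses a single linear piece — which is exactly the content of Lemma~\ref{lem:Cshift} applied with each of the finitely many $u\in\N$; invoking it uniformly, together with the fact that $\B_{k(n)}\subseteq\N$ for every $k$, closes the gap. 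Once that is in hand, collecting all the finitely many coefficients (those of $\bar F, G, A_1,A_2,B_1,B_2$ and of the finitely many auxiliary $M_{i,k,u}$) as variables, and all the finitely many linear inequalities produced by the above, exhibits Problem~\ref{pr:stepb} under the $\C$-linearity restriction as a finite linear program, which is the claim. \Halmos
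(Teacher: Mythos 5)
Your proposal follows the same route as the paper's proof: Lemma~\ref{lem:objlinear} for the objective, Lemma~\ref{lem:Cshift} to make every shifted term $\T$-linear over the single refined partition, splitting the modulus in~\eqref{eq:stepbfirst} into two sign constraints, and Lemma~\ref{lem:Hfinite} to convert each resulting nonnegativity condition into finitely many linear inequalities in the $48$ coefficients. That structure is correct, and your care about why one $\T$ partition accommodates all shifts $u\in\B_{k(n)}$ simultaneously is exactly the point the paper makes only in passing.

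The one step that does not work as you state it is the elimination of the $\max$ terms in~\eqref{eq:stepbbiasbound1}--\eqref{eq:stepbbiasbound2}. You introduce auxiliary $\T$-linear functions $M_{i,k,u}$ dominating both branches and argue the reformulation is exact because minimization drives each $M$ down to the pointwise maximum. But the pointwise maximum of two $\T$-linear functions is in general only piecewise linear \emph{within} a set $\T_t$, not $\T$-linear, so there need not exist any $\T$-linear $M$ achieving that maximum; your reformulated LP can therefore be strictly more restrictive than Problem~\ref{pr:stepb} under the $\C$-linearity constraint. This is harmless downstream (every feasible point of your LP is feasible for Problem~\ref{pr:stepb}, so its optimal value still upper-bounds $\FF$), but it does not establish the literal claim that the restricted problem \emph{is} a finite linear program. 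The exact and still finite replacement --- and what the paper's ``readily be replaced by constraints that are linear'' amounts to --- is enumeration: a constraint of the form $h(n)+\sum_{\alpha}\max\{a_\alpha(n),b_\alpha(n)\}\leq c(n)$ for all $n$ is equivalent to the conjunction, over the finitely many selections of one branch per index $\alpha$ (at most $2^{18}$ per component), of a single $\T$-linear sign constraint, each of which Lemma~\ref{lem:Hfinite} then reduces to finitely many linear inequalities. With that substitution the rest of your argument goes through verbatim.
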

\begin{proof}{Proof:}
First observe that even though constraints~\eqref{eq:stepbfirst}--\eqref{eq:stepblast} themselves are not linear, they can readily be replaced by constraints that are linear in the functions$\bar F, G, A_1, A_2, B_1$ and $B_2$. Next, it follows from Lemma~\ref{lem:Cshift} that these constraints can be reduced to non-negativity of $\T$-linear functions. Note that the technical condition in Lemma~\ref{lem:Cshift}, $\hat H(n)=H(n+u)=0$ if $n+u\neq\S$, does not come into play, since all expressions involve only $u\in\B_{k(n)}$. From Lemma~\ref{lem:Hfinite} it follows that non-negativity is equivalent to a finite number of constraints in the coefficients that constitute these functions. Finally, it follows from Lemma~\ref{lem:objlinear} that the objective function~\eqref{eq:stepbobj} is linear. \Halmos
\end{proof}

It is possible to craft the linear constraints in the coefficients of the functions $\bar F, G, A_1, A_2, B_1$ and $B_2$ for the finite linear program by hand. This is, however, a tedious and error-prone process. A more convenient method of generating the finite linear program is by making use of a mathematical programming language like AMPL~\cite{ampl}.  Indeed, the representation of Problem~\ref{pr:stepb} together with Lemmas~\ref{lem:Cshift}--\ref{lem:objlinear} straightforwardly leads to an implementation in a mathematical programming language.

%
%
%
\section{Generalization and variations} \label{sec:generalize}
In this section we present three additional results. First we present a method to establish a comparison result, \ie an ordering, on $R$ and $\bar R$. After that we generalize Theorem~\ref{th:main} from Section~\ref{sec:result} to include the case that the transition rates of $R$ and $\bar R$ are different for transitions that are not along the unit directions. Finally, we present results on establishing lower bounds on performance.

\subsection{Comparison result}
The results that have been presented in Section~\ref{sec:result} are based on the error bound result by van Dijk Theorem~\ref{th:error}. The next result by van Dijk, as found in, for instance~\cite{vandijk11inbook}, provides a direct comparison between two random walks.
\begin{theorem}[\cite{vandijk11inbook}] \label{th:comparisonnico}
Let $\bar F:\S\to [0,\infty)$ satisfy
\begin{equation} \label{eq:constrthcomp}
\bar F(n) - F(n) + \sum_{u\in\B_{k(n)}}q_{k(n),u}D_u^t(n) \geq 0,
\end{equation}
for all $n\in\S$ and $t\geq 0$.
Then
\begin{equation*}
 \FF\ \leq\ \sum_{n\in\S}\bar F(n)\bar \pi(n).
\end{equation*}
\end{theorem}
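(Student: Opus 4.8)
The plan is to mirror the proof of Theorem~\ref{th:error}, but keeping track only of the one-sided inequality rather than the two-sided bound with the error function $G$. The starting point is the standard observation from the Markov reward approach: for the original random walk $R$ with stationary distribution $\pi$, the average reward $\FF = \sum_{n\in\S}\pi(n)F(n)$ can be written as a telescoping limit involving the cumulative reward functions $F^t$. Concretely, using $\sum_n \pi(n) F^{t+1}(n) - \sum_n \pi(n) F^t(n) = \FF$ (which follows because $\pi$ is stationary for $R$ and the one-step operator in~\eqref{eq:dynF} adds exactly $F$), one gets $\FF = \lim_{T\to\infty} \frac{1}{T}\sum_{t=1}^{T}\sum_n \pi(n)F^t(n)$ after dividing by $T$ — or more cleanly, $\FF \cdot T = \sum_n \pi(n) F^T(n)$ telescopes, so $\FF = \lim_{T\to\infty} \frac1T \sum_n \pi(n) F^T(n)$ once one normalizes appropriately.

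First I would set up the key identity. Write $\sum_n \bar\pi(n)\bar F(n) - \FF$ and try to express the difference $\bar F(n) - F(n)$ in a form where the hypothesis~\eqref{eq:constrthcomp} can be applied. The trick in van Dijk's argument is to compare the reward accrued under $R$ started from the $\bar\pi$-distribution: one considers $\sum_n \bar\pi(n)\big[\bar F(n) - F^{t+1}(n) + F^t(n)\big]$ and uses that $F^{t+1}(n) - F^t(n)$, when averaged against $\bar\pi$ via the $\bar R$-dynamics, relates to the bias terms $D^t_u(n)$. Specifically, because $\bar\pi$ is stationary for $\bar R$, one has $\sum_n \bar\pi(n) \sum_{u}\bar p_{k(n),u} F^t(n+u) = \sum_n \bar\pi(n) F^t(n) + \sum_n\bar\pi(n)\sum_u q_{k(n),u} D^t_u(n)$, after substituting $\bar p_{k,u} = p_{k,u} + q_{k,u}$ and using~\eqref{eq:dynF} to recognize $F(n) + \sum_u p_{k(n),u}F^t(n+u) = F^{t+1}(n)$ together with $\sum_u q_{k(n),u}F^t(n+u) = \sum_u q_{k(n),u}D^t_u(n)$ (valid since $\sum_u q_{k,u} = 0$ for each $k$, as both $p$ and $\bar p$ are probability distributions). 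Assembling these pieces gives a telescoping sum: $\sum_n \bar\pi(n)\bar F(n) - \FF = \lim_{T\to\infty}\frac1T\sum_{t=0}^{T-1}\sum_n \bar\pi(n)\big[\bar F(n) - F(n) + \sum_u q_{k(n),u}D^t_u(n)\big]$.

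Then the conclusion is immediate: by hypothesis~\eqref{eq:constrthcomp} every summand $\bar F(n) - F(n) + \sum_u q_{k(n),u}D^t_u(n)$ is nonnegative, hence the Cesàro average of its $\bar\pi$-expectation is nonnegative, so $\sum_n \bar\pi(n)\bar F(n) - \FF \ge 0$, which is exactly $\FF \le \sum_n \bar F(n)\bar\pi(n)$. I expect the main obstacle to be the convergence/interchange-of-limits bookkeeping: one must justify that the telescoped remainder terms $\frac1T\sum_n\bar\pi(n)F^T(n) \to \FF$ (or vanish, depending on the normalization chosen) and that summation and limit can be exchanged — this typically requires either a uniform bound on the bias terms, nonnegativity of $F$, or an appeal to the Abelian/Tauberian averaging argument already implicit in~\cite{vandijk11inbook}. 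Since the statement cites~\cite{vandijk11inbook}, I would keep this part brief, noting that it is the standard Markov reward telescoping identity specialized to random walks in the quarter-plane, and that the only new ingredient relative to Theorem~\ref{th:error} is dropping the modulus and the function $G$, replacing the two-sided estimate by the one-sided inequality~\eqref{eq:constrthcomp}.
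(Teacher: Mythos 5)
The paper does not prove this statement: it is imported verbatim from van Dijk's work (\cite{vandijk11inbook}), so there is no in-paper proof to compare against. Your argument is the standard Markov reward telescoping proof from that reference, and its skeleton is sound: the key identity
\[
\sum_{n\in\S}\bar\pi(n)\bar F(n) - \FF \;=\; \lim_{T\to\infty}\frac1T\sum_{t=0}^{T-1}\sum_{n\in\S}\bar\pi(n)\Big[\bar F(n)-F(n)+\sum_{u}q_{k(n),u}D^t_u(n)\Big]
\]
is correct, and together with hypothesis~\eqref{eq:constrthcomp} it yields the claim immediately. One intermediate display is garbled, though: you write that stationarity of $\bar\pi$ gives $\sum_n\bar\pi(n)\sum_u\bar p_{k(n),u}F^t(n+u)=\sum_n\bar\pi(n)F^t(n)+\sum_n\bar\pi(n)\sum_u q_{k(n),u}D^t_u(n)$. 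Stationarity alone gives $\sum_n\bar\pi(n)\sum_u\bar p_{k(n),u}F^t(n+u)=\sum_n\bar\pi(n)F^t(n)$ with no correction term; the identity you want is the one for the \emph{unperturbed} kernel, $\sum_n\bar\pi(n)\sum_u p_{k(n),u}F^t(n+u)=\sum_n\bar\pi(n)F^t(n)-\sum_n\bar\pi(n)\sum_u q_{k(n),u}D^t_u(n)$, obtained by writing $p=\bar p-q$ and using $\sum_u q_{k,u}=0$. Since your final telescoped identity is the correct one, this is a transcription slip rather than a structural flaw, but as written that line is false and the sign matters for the direction of the inequality. The remaining caveats you flag (justifying $\tfrac1T\sum_n\bar\pi(n)F^T(n)\to\FF$ and the interchange of limit and infinite sum) are indeed the genuine technical content deferred to \cite{vandijk11inbook}, and deferring them is appropriate here.
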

The relevance of the above result is twofold. First, there are cases in which Theorem~\ref{th:comparisonnico} results in a better upper bound on $\FF$ than Theorem~\ref{th:error}. In Section~\ref{sec:example} we will provide some examples. It should also be noted that Theorem~\ref{th:comparisonnico} is not universally better than Theorem~\ref{th:comparisonnico}. In fact there are examples in which there are no solutions to Theorem~\ref{th:comparisonnico}, but for which Theorem~\ref{th:error} is valid.

The second use of Theorem~\ref{th:comparisonnico} stems from the fact that useful results can be deducted without explicit knowledge of $\bar \pi$,  the invariant measure of the perturbed random walk. Indeed a comparison can be made between two systems directly. This can be useful, for instance, in analyzing the effect of changing certain parameters, like specific transition probabilities.

The first variation of Problem~\ref{pr:stepb} that we consider is a straighforward application of Theorem~\ref{th:comparisonnico}. The variables in the optimization problem below are the functions $\bar F, A_1, A_2, B_1$ and $B_2$. Since the aim is no longer to obtain a bound on the modulus of the LHS of~\eqref{eq:constrthcomp}, there is no function $G$.  
\begin{problem} \label{pr:comparisona}
\begin{align}
 \text{minimize}\ &\sum_{n\in\S}\bar F(n) \bar \pi(n), \label{eq:stepbobj} \\
\text{subject to}\
  &\bar F(n) - F(n) + \sum_{i=1,2}\left(q_{k(n),e_i}E_i(n) + q_{k(n),\mei}E_i(n-e_i) \right) \geq 0, \label{eq:comparisonfirst} \\[.4em]
  & -A_i(n) \leq E_i(n) \leq B_i(n), \label{eq:stepbsecond} \\[.5em]
  &  F(n+e_i)-F(n)  + \sum_{j=1,2}\sum_{u\in\mathrlap{\B_{k(n)}}} \max\{-c_{i,k(n),j,u}A_i(n+u), c_{i,k(n),j,u}B_i(n+u)\} \leq B_i(n), \label{eq:stepbbiasbound1}  \\[.4em]
  & F(n)-F(n+e_i) + \sum_{j=1,2}\sum_{u\in\mathrlap{\B_{k(n)}}} \max\{-c_{i,j,k(n),u}B_i(n+u), c_{i,j,k(n),u}A_i(n+u)\}\leq A_i(n), \label{eq:stepbbiasbound2} \\[.4em]
  & \bar F(n)\geq 0, A_i(n)\geq 0, B_i(n)\geq 0,\qquad\text{for }n\in\S, i\in\{1,2\}. \label{eq:comparisonlast}
\end{align}
\end{problem}

 The next corollary is an immediate consequence of Theorems~\ref{th:main} and~\ref{th:comparisonnico}.
\begin{corollary} \label{cor:comparisona}
Let $q_{k,u}=0$ if $u\not\in\{-e_1,e_1,-e_2,e_2,0\}$ and let $\FF^*$ denote the optimal value of Problem~\ref{pr:comparisona}. Then $\FF\leq\FF^*$.
\end{corollary}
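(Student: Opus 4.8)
The plan is to mirror the proof of Theorem~\ref{th:main} exactly, replacing the two-sided bound of Theorem~\ref{th:error} by the one-sided bound of Theorem~\ref{th:comparisonnico}. First I would observe that any feasible solution $(\bar F, A_1, A_2, B_1, B_2, E_1, E_2)$ of Problem~\ref{pr:comparisona} satisfies constraints~\eqref{eq:stepbbiasbound1} and~\eqref{eq:stepbbiasbound2}, which are precisely constraints~\eqref{eq:DiboundA} and~\eqref{eq:DiboundB} of Lemma~\ref{lem:biasbounds}. Hence, invoking Lemma~\ref{lem:dynD} (legitimate since the coefficients $c_{i,k,j,u}$ are assumed to satisfy Assumption~\ref{ass:a}, e.g.\ via Table~\ref{table:constants}) together with Lemma~\ref{lem:biasbounds}, we get $-A_i(n) \leq D_i^t(n) \leq B_i(n)$ for all $i\in\{1,2\}$, $n\in\S$ and $t\geq 0$.

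Next I would combine this with constraint~\eqref{eq:stepbsecond}, which forces $E_i(n) \in [-A_i(n), B_i(n)]$, so that for every $n$ and $t$ the quantity $\sum_{i=1,2}\big(q_{k(n),e_i}D_i^t(n) + q_{k(n),\mei}D_i^t(n-e_i)\big)$ lies between the corresponding expression with $D_i^t$ replaced by the worst choice among $-A_i, B_i$; in particular constraint~\eqref{eq:comparisonfirst}, being the same expression with $E_i$ in place of $D_i^t$ and the inequality $\geq 0$, remains valid when the $E_i$ are replaced by the actual bias terms --- here one uses that the sign pattern in~\eqref{eq:comparisonfirst} is preserved because $E_i$ ranges over the full interval $[-A_i, B_i]$ that also contains $D_i^t(n)$ and $D_i^t(n-e_i)$, so the minimum over that interval of the linear expression is a feasible choice of $E_i$. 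Then, using $D^t_{\o}(n)=0$ and $D^t_{\mei}(n) = -D^t_{e_i}(n-e_i)$ together with the hypothesis $q_{k,u}=0$ for $u\notin\{-e_1,e_1,-e_2,e_2,0\}$, the left-hand side of~\eqref{eq:comparisonfirst} equals $\bar F(n) - F(n) + \sum_{u\in\B_{k(n)}} q_{k(n),u} D_u^t(n)$, so condition~\eqref{eq:constrthcomp} of Theorem~\ref{th:comparisonnico} holds.

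Applying Theorem~\ref{th:comparisonnico} then yields $\FF \leq \sum_{n\in\S}\bar F(n)\bar\pi(n)$, which is the objective~\eqref{eq:stepbobj} evaluated at this feasible point. Taking the infimum over all feasible points gives $\FF \leq \FF^*$. The main obstacle is the book-keeping in the middle step: one must argue carefully that substituting the actual bias terms for the auxiliary variables $E_i$ (which in the optimum would be pushed to whichever endpoint minimizes the objective) only strengthens the already-established inequality~\eqref{eq:comparisonfirst}, rather than merely replacing one feasible choice by another. This is exactly the reasoning already carried out in the proof of Theorem~\ref{th:main}, so in the write-up I would simply cite Theorem~\ref{th:main} and Theorem~\ref{th:comparisonnico} and note that the argument is identical with the two-sided modulus constraint replaced by the one-sided inequality.
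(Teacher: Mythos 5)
Your proposal is correct and follows the paper's route exactly: the paper dispatches this corollary in a single line as an immediate consequence of Theorems~\ref{th:main} and~\ref{th:comparisonnico}, and your argument is precisely the unpacking of that line (reuse Lemmas~\ref{lem:dynD} and~\ref{lem:biasbounds} to obtain $-A_i(n)\leq D_i^t(n)\leq B_i(n)$ for all $t\geq 0$, then invoke the one-sided comparison result in place of the two-sided error bound of Theorem~\ref{th:error}). The one delicate point you flag --- substituting the actual bias terms for the auxiliary variables $E_i$ --- is treated no more and no less rigorously than in the paper's own proof of Theorem~\ref{th:main}, so it introduces no gap beyond what the paper itself accepts.
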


The difference between Problems~\ref{pr:stepb} and~\ref{pr:comparisona} is small in the sense that both problems require upper and lower bounds on $D_i^t(n)$. There are cases it is not possible to find such upper and lower bounds in which case neither Problem~\ref{pr:stepb} nor Problem~\ref{pr:comparisona} has a feasible solution. However, in some of these cases it might still be possible to obtain a result on the sign of $D_i^t(n)$. Together with the sign of $q_{k,e_i}$ and $q_{k,\mei}$ this could be used to establish~\eqref{eq:constrthcomp} and obtain a comparison result.

\subsection{Arbitrary perturbations}
In Section~\ref{sec:result} we derived an error bound result for the case that the perturbations from $R$ to $\bar R$ were along the unit directions only, \ie $q_{k,u}=0$ if $u\not\in\{-e_1,e_1,-e_2,e_2,0\}$. In this subsection we extend this result to arbitary perturbations, \ie arbitary $q_{k,u}$. The method we use for this generalization is to use the bounds on the bias terms $D_{e_1}^t=D_1^t$ and $D_{e_2}^t=D_2^t$ that are obtained from Constraints~\eqref{eq:stepbbiasbound1} and~\eqref{eq:stepbbiasbound2} to construct bounds on the bias terms $D_u^t$ in the other directions ,\ie for $u\neq\{e_1,e_2\}$. In order to prevent confusion, in this section we will refrain from using the notation $D^t_i$, $A_i$ and $B_i$. Instead we will use the full forms $D^t_{e_i}$, $A_{e_i}$ and $B_{e_i}$.

For the purpose of bounding $D_u^t$  we introduce functions $A_u$ and $B_u$ for each $u\in\N$. In similar spirit to previous considerations the aim is to achieve
\begin{equation}
 -A_u(n)\leq D_u^t(n)\leq B_u(n).
\end{equation}
In Section~\ref{sec:result} we have obtained bounds on $A_{e_i}$ and $B_{e_i}$, for $i=1,2$. We present a construction to reuse these bounds and obtain the desired result on $A_u$ and $B_u$ for all $u\in\NN$.

Before giving the general construction of the functions $A_u$, $B_u$, we provide an example for the case that $u=-d_2$. First note that $D_{\mdb}^t(n)$ is only defined if $n-d_2\in\S$. Now, using the fact that $-d_2=-e_1+e_2$ we have
\begin{align}
D^t_{\mdb}(n)
&= F^t(n-d_2) - F^t(n) \\
&= \left(F^t(n-e_1+e_2) - F^t(n-e_1)\right) - \left(F^t(n-e_1+e_1) - F^t(n-e_1)\right) \\
&= D^t_{e_2}(n-e_1) - D^t_{e_1}(n-e_1). 
\end{align}
By constructing $A_{\mdb}$ and $B_{\mdb}$ on domain $S\cap(S+d_2)$ as
\begin{align}
A_{\mdb}(n) &= A_{e_2}(n) + B_{e_1}(n-e_1), \\
B_{\mdb}(n) &= B_{e_2}(n) + A_{e_1}(n-e_1),
\end{align}
we achieve $-A_{\mdb}(n)\leq D^t_{\mdb}(n)\leq B_{\mdb}(n)$ as required.
The general construction for arbitrary $u\in\N$ is given in the next result. Since $D_u^t(n)=F^t(n+u)-F^t(n)$ is not defined if $n+u\not\in\S$ we need to take some care in defining the domain of the functions $A_u$ and $B_u$.
\begin{lemma} \label{lem:ABu}
Consider for $u\in\N\setminus\{e_1,e_2\}$ the functions $A_u: S\cap(S-u)\to[0,\infty) $ and $B_u: S\cap(S-u)\to[0,\infty)$ defined as
\begin{IEEEeqnarray*}{rCl}
A_{u}(n) &=& \II{u_1=1}A_{e_1}(n) + \II{u_1=-1}B_{e_1}(n-e_1) + \II{u_2=1}A_{e_2}(n+u_1e_1) \\
  && + \II{u_2=-1}B_{e_2}(n+u_1e_1+u_2e_2), \\
B_{u}(n) &=& \II{u_1=1}B_{e_1}(n) + \II{u_1=-1}A_{e_1}(n-e_1) + \II{u_2=1}B_{e_2}(n+u_1e_1) \\
  &&  + \II{u_2=-1}A_{e_2}(n+u_1e_1+u_2e_2),
\end{IEEEeqnarray*}
where, for $i=1,2$, $A_{e_i}: S\to[0,\infty)$ and $B_{e_i}: S\to[0,\infty)$.
If $-A_{e_i}(n)\leq D_{e_i}^t(n)\leq B_{e_i}(n)$, $i=1,2$, $t\geq 0$, then
\begin{equation}
-A_u(n)\leq D_u^t(n)\leq B_u(n),
\end{equation}
for all $u\in\N$, $n\in\S$ and all $t\geq 0$.
\end{lemma}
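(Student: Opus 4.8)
The statement to prove is Lemma~\ref{lem:ABu}, which bounds $D_u^t$ for an arbitrary neighbour direction $u\in\N$ in terms of the already-available bounds $A_{e_i},B_{e_i}$ on the unit-direction bias terms. The plan is to fix $u\in\N$, write $u=(u_1,u_2)$ with $u_1,u_2\in\{-1,0,1\}$, and decompose the jump by $u$ into (at most) a horizontal step followed by a vertical step. Concretely, for $n$ with $n+u\in\S$, I would telescope
\begin{equation*}
D_u^t(n)=F^t(n+u)-F^t(n)=\bigl(F^t(n+u_1e_1+u_2e_2)-F^t(n+u_1e_1)\bigr)+\bigl(F^t(n+u_1e_1)-F^t(n)\bigr).
\end{equation*}
The second bracket is a pure horizontal displacement: if $u_1=1$ it equals $D_{e_1}^t(n)$, if $u_1=-1$ it equals $-D_{e_1}^t(n-e_1)$ (using $D_{-e_1}^t(m)=-D_{e_1}^t(m-e_1)$), and if $u_1=0$ it is zero. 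The first bracket is a pure vertical displacement applied at the shifted point $m:=n+u_1e_1$: if $u_2=1$ it equals $D_{e_2}^t(m)$, if $u_2=-1$ it equals $-D_{e_2}^t(m-e_2)=-D_{e_2}^t(n+u_1e_1+u_2e_2)$ evaluated with the convention already used in the $u=-d_2$ example, and if $u_2=0$ it is zero.

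Having made this decomposition, I would apply the hypothesis $-A_{e_i}(\cdot)\le D_{e_i}^t(\cdot)\le B_{e_i}(\cdot)$ term by term, being careful with signs: a $+D_{e_i}^t$ contribution is bounded above by $B_{e_i}$ and below by $-A_{e_i}$, whereas a $-D_{e_i}^t$ contribution is bounded above by $A_{e_i}$ and below by $-B_{e_i}$. Collecting the upper bounds gives exactly the expression defining $B_u(n)$ in the statement — the indicator $\II{u_1=1}$ picks up $B_{e_1}(n)$, $\II{u_1=-1}$ picks up $A_{e_1}(n-e_1)$, $\II{u_2=1}$ picks up $B_{e_2}(n+u_1e_1)$, and $\II{u_2=-1}$ picks up $A_{e_2}(n+u_1e_1+u_2e_2)$ — and symmetrically the lower bounds assemble into $-A_u(n)$. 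This establishes $-A_u(n)\le D_u^t(n)\le B_u(n)$ for the chosen $u$ and all $t\ge 0$. The cases $u=e_1,e_2$ are trivially the hypothesis itself, which is why they are excluded from the definition, and I would note that the formulas for $A_u,B_u$ reduce correctly (to $A_{e_i},B_{e_i}$) in those cases as a sanity check.

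The only genuinely delicate point — and the one I would state carefully rather than gloss — is the \emph{domain bookkeeping}: $D_u^t(n)$ is defined only when $n+u\in\S$, i.e.\ on $S\cap(S-u)$, and the intermediate point $n+u_1e_1$ must also lie in $\S$ for the telescoping to be meaningful. I would verify that $n\in S\cap(S-u)$ together with $u_i\in\{-1,0,1\}$ forces $n+u_1e_1\in\S$ (and $n-e_1\in\S$ when $u_1=-1$, $n+u_1e_1+u_2e_2\in\S$ when $u_2=-1$), so every $A_{e_i},B_{e_i}$ invoked on the right-hand side is evaluated at a legitimate point of $\S$; this is where the specific choice of arguments in the lemma's formulas (e.g.\ $n-e_1$ versus $n$) is forced. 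Beyond this, the argument is a short finite case analysis over the nine values of $(u_1,u_2)$ (or, more economically, a single computation treating $u_1,u_2$ symbolically via the indicators), with no induction and no optimization involved. I do not expect any real obstacle; the proof is essentially the $u=-d_2$ worked example in the excerpt carried out in general, and the main care is notational consistency with the sign convention $D_{-e_i}^t(m)=-D_{e_i}^t(m-e_i)$ and with the domains.
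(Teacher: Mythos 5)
Your proposal is correct and follows essentially the same route as the paper: the published proof consists precisely of the observation that $D_u^t(n)$ decomposes as $\II{u_1=1}D_{e_1}^t(n) - \II{u_1=-1}D_{e_1}^t(n-e_1) + \II{u_2=1}D_{e_2}^t(n+u_1e_1) - \II{u_2=-1}D_{e_2}^t(n+u_1e_1-e_2)$, i.e.\ your horizontal-then-vertical telescoping, after which the sign-aware application of the bounds $-A_{e_i}\le D_{e_i}^t\le B_{e_i}$ yields $A_u$ and $B_u$ exactly as defined. Your additional attention to the domain bookkeeping (checking that the intermediate point $n+u_1e_1$ and the shifted arguments lie in $\S$) is a point the paper leaves implicit, but it introduces no divergence in the argument.
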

\begin{proof}{Proof}
The results follows directly from the observation that we can write
\begin{IEEEeqnarray*}{rCl'rCl}
D_{\mea}(n) &=& -D_{e_1}(n-e_1), & D_{\meb}(n) &=& -D_{e_2}(n-e_2), \\
D_{d_1}(n) &=& D_{e_1}(n) + D_{e_2}(n+e_1), & D_{\mda}(n) &=& -D_{e_1}(n-e_1) - D_{e_2}(n-e_1-e_2), \\
D_{d_2}(n) &=& D_{e_1}(n) - D_{e_2}(n+e_1-e_2), & D_{\mdb}(n) &=& -D_{e_1}(n-e_1) + D_{e_2}(n-e_1),
\end{IEEEeqnarray*}
\ie that
\begin{multline*}
D_{u}(n) = \II{u_1=1}D_{e_1}(n) - \II{u_1=-1}D_{e_1}(n-e_1) + \II{u_2=1}D_{e_2}(n+u_1e_1) \\
  - \II{u_2=-1}D_{e_2}(n+u_1e_1-e_2). \Halmos
\end{multline*}
\end{proof}

Next, we provide the natural extension of Problem~\ref{pr:stepb} that includes the bounds on the bias terms in all directions. Like Problem~\ref{pr:stepb} the optimal value of the problem provides an upper bound on $\FF$. A formal statement of this result is given below. 
\begin{problem} \label{pr:arbitrary}
\begin{align}
 \text{minimize}\ &\sum_{n\in\S}\left[\bar F(n) + G(n)\right]\bar \pi(n), \\
\text{subject to}\
  &\bigg| \bar F(n) - F(n) + \sum_{u\in\B_{k(n)}}q_{k(n),u}E_u(n) \bigg| \leq G(n),\quad \text{for }n\in\S,  \\[.5em]
  & -A_u(n) \leq E_u(n) \leq B_u(n),\quad \text{for }n\in\S\cap(S-u), u\in\B, \\[.5em]
  &  A_{u}(n) = \II{u_1=1}A_{e_1}(n)  + \II{u_1=-1}B_{e_1}(n-e_1) + \II{u_2=1}A_{e_2}(n+u_1e_1) \notag \\  
  &      \hspace{28mm} + \II{u_2=-1}B_{e_2}(n+u_1e_1+u_2e_2), \quad \text{for }n\in\S\cap(S-u), u\in\N, \\[.5em]
  &  B_{u}(n) = \II{u_1=1}B_{e_1}(n)  + \II{u_1=-1}A_{e_1}(n-e_1) + \II{u_2=1}B_{e_2}(n+u_1e_1) \notag \\ 
  & \hspace{28mm} + \II{u_2=-1}A_{e_2}(n+u_1e_1+u_2e_2),  \quad \text{for }n\in\S\cap(S-u), u\in\N, \\[.5em]
  &  F(n+e_i)-F(n)  + \sum_{j=1,2}\sum_{u\in\B_{k(n)}} \max\{-c_{i,k(n),j,u}A_{e_j}(n+u), c_{i,k(n),j,u}B_{e_j}(n+u)\}  \notag \\[-3mm]
     &\hspace{83mm}\leq B_{e_i}(n),\quad \text{for }n\in\S,i\in\{1,2\},   \\[.5em]
  & F(n)-F(n+e_i) + \sum_{j=1,2}\sum_{u\in\B_{k(n)}} \max\{-c_{i,j,k(n),u}B_{e_j}(n+u), c_{i,j,k(n),u}A_{e_j}(n+u)\} \notag \\[-3mm]
  & \hspace{83mm} \leq A_{e_i}(n), \quad \text{for }n\in\S,i\in\{1,2\},  \\[.5em]
  & \bar F(n)\geq 0, G(n)\geq 0, A_u(n)\geq 0, B_u(n)\geq 0,\quad \text{for }n\in\S\cap(S-u), u\in\B.
\end{align}
\end{problem}
\begin{theorem} \label{th:arbitrary}
Let $\FF^*$ denote the optimal value of Problem~\ref{pr:arbitrary}. Then $\FF\leq\FF^*$.
\end{theorem}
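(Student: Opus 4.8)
The plan is to assemble Theorem~\ref{th:arbitrary} from the ingredients already developed, exactly mirroring the proof of Theorem~\ref{th:main} but with Lemma~\ref{lem:ABu} supplying the bounds on the bias terms in the non-unit directions. First I would show that any feasible solution of Problem~\ref{pr:arbitrary} yields bounds $-A_{e_i}(n)\leq D_{e_i}^t(n)\leq B_{e_i}(n)$ for $i=1,2$ and all $t\geq 0$. This is immediate: the last two inequality constraints of Problem~\ref{pr:arbitrary} are precisely the hypotheses \eqref{eq:DiboundA} and \eqref{eq:DiboundB} of Lemma~\ref{lem:biasbounds} (written out with $A_{e_j},B_{e_j}$ in place of $A_j,B_j$), and $A_{e_i},B_{e_i}\geq 0$, so Lemma~\ref{lem:biasbounds} applies and gives the uniform-in-$t$ bounds on $D_{e_i}^t$.

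Next I would propagate these to all directions $u\in\B$. The equality constraints defining $A_u$ and $B_u$ in Problem~\ref{pr:arbitrary} are exactly the definitions appearing in Lemma~\ref{lem:ABu}, so that lemma yields $-A_u(n)\leq D_u^t(n)\leq B_u(n)$ for every $u\in\B$, every $n\in\S\cap(S-u)$ and every $t\geq 0$. Combining this with the constraint $-A_u(n)\leq E_u(n)\leq B_u(n)$ shows that $E_u(n)$ lies in the same interval as $D_u^t(n)$ for all $t$; hence replacing $D_u^t(n)$ by $E_u(n)$ inside the sum $\sum_{u\in\B_{k(n)}}q_{k(n),u}(\,\cdot\,)$ can only enlarge its absolute value. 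Concretely, for each $n$ and each $t\geq 0$,
\begin{equation*}
\Big| \bar F(n) - F(n) + \sum_{u\in\B_{k(n)}}q_{k(n),u}D_u^t(n) \Big|
\leq \max_{E}\Big| \bar F(n) - F(n) + \sum_{u\in\B_{k(n)}}q_{k(n),u}E_u(n) \Big|,
\end{equation*}
where the maximum is over all choices of $E_u(n)\in[-A_u(n),B_u(n)]$; since the feasible $E_u$ in Problem~\ref{pr:arbitrary} satisfy the first constraint $|\,\bar F(n)-F(n)+\sum_u q_{k(n),u}E_u(n)\,|\leq G(n)$ for the particular feasible values, I would instead argue directly: the first constraint holds for the feasible $E_u$, and because $D_u^t(n)$ and $E_u(n)$ both lie in $[-A_u(n),B_u(n)]$ and the objective constraint is an affine function of the $E_u(n)$ bounded in modulus by $G(n)$ at the endpoints' convex hull, \eqref{eq:constrtherror} of Theorem~\ref{th:error} follows. (A clean way to phrase this is to note that by convexity the modulus is maximized at a vertex of the box $\prod_u[-A_u(n),B_u(n)]$, and feasibility of Problem~\ref{pr:arbitrary} forces the bound $G(n)$ at every such vertex — or, more simply, one keeps the $E_u$ as free variables exactly as in Problem~\ref{pr:stepa} and observes that $D_u^t(n)$ is an admissible value of $E_u(n)$.)

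Having established \eqref{eq:constrtherror} with the feasible $\bar F$ and $G$, I would invoke Theorem~\ref{th:error} to conclude $\FF\leq\sum_{n\in\S}[\bar F(n)+G(n)]\bar\pi(n)$ for every feasible solution, and then take the infimum over feasible solutions to obtain $\FF\leq\FF^*$. The only mild technical point — and the step I expect to need the most care — is the domain bookkeeping: $D_u^t(n)$, $A_u(n)$ and $B_u(n)$ are only defined on $\S\cap(S-u)$, and one must check that in the sum $\sum_{u\in\B_{k(n)}}q_{k(n),u}(\cdots)$ every term that actually occurs has $n+u\in\S$, i.e. $n\in S-u$, so that the substitution is legitimate; this holds because $q_{k(n),u}=0$ whenever $n+u\notin\S$ (transitions are to neighbors within $\S$), exactly as noted below Lemma~\ref{lem:Cshift}. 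With that observation the argument is otherwise a routine concatenation of Lemmas~\ref{lem:biasbounds} and~\ref{lem:ABu} with Theorem~\ref{th:error}, parallel to the proof of Theorem~\ref{th:main}.
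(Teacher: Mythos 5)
Your proposal is correct and takes essentially the same route as the paper, whose entire proof of this theorem is the one-line citation ``Directly from Lemmas~\ref{lem:dynD}, \ref{lem:biasbounds} and~\ref{lem:ABu} and Theorem~\ref{th:error}''; you simply spell out the concatenation. The one step you deliberate over --- transferring the modulus bound from the auxiliary variables $E_u(n)$ to the bias terms $D_u^t(n)$ --- is resolved by reading the box constraint on $E_u$ as universally quantified (the worst case over $[-A_u(n),B_u(n)]$, which is the paper's evident intent when it says replacing $D_i^t$ by its bounds ``makes the constraints more stringent''), under which your closing observation that $D_u^t(n)$ is an admissible value of $E_u(n)$ is exactly the right justification.
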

\begin{proof}{Proof:}
Directly from Lemmas~\ref{lem:dynD}, \ref{lem:biasbounds} and~\ref{lem:ABu} and Theorem~\ref{th:error}.\Halmos
\end{proof}

Lemma~\ref{lem:ABu} and Problem~\ref{pr:arbitrary} provide one means of establishing a linear programming based error bound. An alternative approach is to directly extend Lemma~\ref{lem:dynD} to the case of bias terms in arbitrary directions. More precisely, this approach would involve finding constants $g_{u,k,v,w}$ for $u,v,w\in\B$, $k=1,\dots,4$ such that
\begin{equation} \label{eq:dynDarbitrary}
D_u^{t+1}(n) = F(n+u)-F(n) + \sum_{v\in\B}\sum_{w\in\B_{\k(n)}}g_{u,k(n),v,w}D_v^t(n+w),
\end{equation}
for $u\in\B$, $n\in\S$ and $t>0$. From~\eqref{eq:dynDarbitrary} we could then develop a generalization of Lemma~\ref{lem:biasbounds} and an alternative to Problem~\ref{pr:arbitrary}. Such an approach would not be hampered by any technical difficulties. However, it would also not provide any additional insights over Problem~\ref{pr:arbitrary}. Therefore, this approach is not pursued in the current paper.

\subsection{Lower bounds}
All results that have been presented in this paper so far deal with upper bounds on $\FF$. Corresponding lower bounds can trivially be obtained. We have for instance the following maximization problem and corollary to Theorems~\ref{th:error} and~\ref{th:main}.
\begin{problem} \label{pr:stepblow}
\begin{align}
& \text{maximize}\ \sum_{n\in\S}\left[\bar F(n) - G(n)\right]\bar \pi(n), \\
& \text{subject to Constraints~\eqref{eq:stepbfirst}--\eqref{eq:stepblast} of Problem~\ref{pr:stepb}.}
\end{align}
\end{problem}
\begin{corollary} \label{cor:mainlow}
Let $q_{k,u}=0$ if $u\not\in\{-e_1,e_1,-e_2,e_2,0\}$. Finally,
let $\FF^*$ denote the optimal value of Problem~\ref{pr:stepblow}. Then $\FF\geq\FF^*$.
\end{corollary}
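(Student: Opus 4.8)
The plan is to deduce Corollary~\ref{cor:mainlow} directly from the results already established for the upper bound, using the symmetry between minimization and maximization that was already pointed out immediately after Problem~\ref{pr:minstart}. Concretely, Theorem~\ref{th:error} provides the two-sided bound $\sum_{n\in\S}[\bar F(n)-G(n)]\bar\pi(n)\leq\FF\leq\sum_{n\in\S}[\bar F(n)+G(n)]\bar\pi(n)$ whenever $\bar F$, $G$ satisfy~\eqref{eq:constrtherror}. The point of Theorem~\ref{th:main} was that feasibility of Problem~\ref{pr:stepb} guarantees that the corresponding $\bar F$, $G$ satisfy~\eqref{eq:constrtherror}; that implication is independent of which linear functional of $\bar F$, $G$ we choose to optimize.

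First I would recall, exactly as in the proof of Theorem~\ref{th:main}, that for any feasible point $(\bar F,G,A_1,A_2,B_1,B_2,E_1,E_2)$ of Problem~\ref{pr:stepb}, Lemmas~\ref{lem:dynD} and~\ref{lem:biasbounds} together with constraints~\eqref{eq:stepbbiasbound1}--\eqref{eq:stepbbiasbound2} give $-A_i(n)\leq D_i^t(n)\leq B_i(n)$ for all $t\geq 0$, and then constraints~\eqref{eq:stepbfirst}--\eqref{eq:stepbsecond}, combined with $q_{k,u}=0$ for $u\notin\{-e_1,e_1,-e_2,e_2,0\}$ and the identities $D^t_{\o}(n)=0$, $D^t_{\mei}(n)=-D^t_{e_i}(n-e_i)$, yield that this $\bar F$ and $G$ satisfy~\eqref{eq:constrtherror}. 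Second, I would invoke the left-hand inequality of Theorem~\ref{th:error}: $\FF\geq\sum_{n\in\S}[\bar F(n)-G(n)]\bar\pi(n)$. Since Problem~\ref{pr:stepblow} has precisely the constraint set of Problem~\ref{pr:stepb} but asks to \emph{maximize} $\sum_{n\in\S}[\bar F(n)-G(n)]\bar\pi(n)$, every feasible point gives a lower bound on $\FF$, and hence the supremum over feasible points, $\FF^*$, also satisfies $\FF\geq\FF^*$. (If Problem~\ref{pr:stepblow} is infeasible the statement is vacuous; if its optimal value is attained the argument applies at the optimizer, otherwise one takes a supremum over feasible points, each of which lower-bounds $\FF$.)

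There is really no substantive obstacle here: the entire content has already been done in the proof of Theorem~\ref{th:main}, and the only new ingredient is that one uses the other half of the two-sided inequality in Theorem~\ref{th:error} and reverses the optimization direction. The one point worth a sentence of care is the quantifier: the bound $\FF\geq\sum_{n}[\bar F(n)-G(n)]\bar\pi(n)$ holds at \emph{every} feasible point, so it holds in particular for a maximizing sequence, giving $\FF\geq\FF^*$ without needing the supremum to be attained. I would therefore write the proof as a two-line corollary proof that simply cites Theorems~\ref{th:error} and~\ref{th:main} and notes the sign change in the objective.
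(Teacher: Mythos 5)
Your proposal is correct and matches the paper's intent exactly: the paper offers no written proof, presenting the corollary as an immediate consequence of Theorems~\ref{th:error} and~\ref{th:main} obtained by using the left-hand inequality of Theorem~\ref{th:error} and reversing the optimization direction over the unchanged constraint set. Your added remark that the bound holds at every feasible point (so attainment of the supremum is not needed) is a harmless and accurate refinement of the same argument.
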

In similar spirit a variation of Problem~\ref{pr:comparisona} and Corollary~\ref{cor:comparisona} can be obtained from the following corollary to Theorem~\ref{th:comparisonnico}.
\begin{corollary} \label{cor:comparisonnicolow}
Let $\bar F:\S\to [0,\infty)$ satisfy
\begin{equation} 
\bar F(n) - F(n) + \sum_{u\in\B_{k(n)}}q_{k(n),u}D_u^t(n) \leq 0,
\end{equation}
for all $n\in\S$ and $t\geq 0$.
Then
\begin{equation*}
 \FF\ \geq\ \sum_{n\in\S}\bar F(n)\bar \pi(n).
\end{equation*}
\end{corollary}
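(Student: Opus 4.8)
The plan is to derive Corollary~\ref{cor:comparisonnicolow} as the direct mirror image of Theorem~\ref{th:comparisonnico}. Concretely, I would apply Theorem~\ref{th:comparisonnico} not to $F$ and $\bar F$ directly, but to the negated performance measure $-F$ with companion function $-\bar F$. The key observation is that the quantity $\FF = \sum_n \pi(n)F(n)$ and the corresponding perturbed quantity behave linearly under this sign change, so a lower bound on $\FF$ is exactly an upper bound on $-\FF = \sum_n \pi(n)(-F(n))$.

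Carrying this out, first I would note that if $\bar F$ satisfies the hypothesis of Corollary~\ref{cor:comparisonnicolow}, namely
\begin{equation*}
\bar F(n) - F(n) + \sum_{u\in\B_{k(n)}}q_{k(n),u}D_u^t(n) \leq 0
\end{equation*}
for all $n\in\S$ and $t\geq 0$, then multiplying through by $-1$ gives
\begin{equation*}
(-\bar F)(n) - (-F)(n) + \sum_{u\in\B_{k(n)}}q_{k(n),u}\tilde D_u^t(n) \geq 0,
\end{equation*}
where $\tilde D_u^t$ denotes the bias terms associated with the reward function $-F$. Here one uses that the bias terms are linear in the underlying reward function: since $F^t$ defined by~\eqref{eq:dynF} is linear in $F$, replacing $F$ by $-F$ replaces $F^t$ by $-F^t$ and hence $D_u^t$ by $-D_u^t$, so $\tilde D_u^t(n) = -D_u^t(n)$, which is exactly what makes the signs line up. Then Theorem~\ref{th:comparisonnico}, applied with reward function $-F$ and function $-\bar F$ (which is still nonnegative only if $\bar F \leq 0$ — see the caveat below), yields $\sum_n \pi(n)(-F(n)) \leq \sum_n \bar\pi(n)(-\bar F(n))$, i.e. $\FF \geq \sum_n \bar F(n)\bar\pi(n)$, as claimed.

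The main obstacle, and the point that needs care rather than calculation, is the nonnegativity requirement: Theorem~\ref{th:comparisonnico} is stated for $\bar F:\S\to[0,\infty)$, whereas the natural companion in the lower-bound version is $-\bar F$, which need not be nonnegative. The clean resolution is to observe that the proof of Theorem~\ref{th:comparisonnico} (the Markov reward telescoping argument) does not actually use nonnegativity of $\bar F$ — it only needs that the relevant sums converge, which holds here because $\bar\pi$ is a summable geometric product form and all functions in play are $\Cset$-linear. Alternatively, one can shift by a large constant: apply the theorem to $c - F$ and $c - \bar F$ for a constant $c$ large enough that both are nonnegative on the support where it matters, and then let the constant cancel since $\sum_n \pi(n) = \sum_n \bar\pi(n) = 1$. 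Either way, the corollary follows; the substantive content is entirely inherited from Theorem~\ref{th:comparisonnico}, and the proof is a two-line sign flip once the bias-term linearity $\tilde D_u^t = -D_u^t$ is recorded.
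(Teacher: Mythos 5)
The paper states this corollary without proof, presenting it as the immediate mirror image of Theorem~\ref{th:comparisonnico}, so your sign-flip derivation is exactly the intended argument and the algebra is correct: by linearity of the recursion~\eqref{eq:dynF} in the reward function, the bias terms associated with $-F$ are $-D_u^t$, and negating the hypothesis of the corollary turns it into the hypothesis of Theorem~\ref{th:comparisonnico} for the pair $(-F,-\bar F)$, whence the reversed conclusion. You are also right that nonnegativity is the only point needing care, but of your two proposed fixes you should rely on the first: the Markov reward telescoping identity underlying Theorem~\ref{th:comparisonnico} is an equality, so the hypothesis inequality simply propagates with reversed sign, and positivity of the rewards is not used beyond the integrability the paper already assumes implicitly. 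The constant-shift fallback does not work in the generality of this paper, because $F$ is allowed to be an unbounded $\C$-linear function (e.g.\ $F(n)=n_1$ in Section~\ref{sec:example}), so no finite constant $c$ makes $c-F$ nonnegative on all of $\S$; the phrase ``on the support where it matters'' does not rescue this, since the hypothesis must hold for every $n\in\S$.
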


%
%
%
\section{Examples} \label{sec:example} \label{sec:example}
In this section we provide a number of examples that illustrate the use of the linear programming approach to obtaining error bounds. First we revisit the example from Section~\ref{sec:motivation}. Subsequently we will consider the case of coupled processors.

\subsection{Joint departures}
We continue with the example of a random walk with joint departures that was discussed in Section~\ref{sec:motivation}. In this section we will provide more extensive results on the performance of this random walk. We restrict our attention to the symmetric case that $\lambda_1=\lambda_2=\lambda$, $2\lambda+\mu=1$ and $\mu_1=\mu_2=\mu^*$, with $0<\mu^*\leq\mu$. The purpose of this section is to demonstrate the following: i) The performance bounds given in Proposition~\ref{prop:prelimbounds} can be improved, ii) The use of componentwise linear functions $A_i$, $B_i$ can significantly improve performance, iii) There are values of $\lambda$, $\mu$ and $\mu^*$ for which bounds cannot be obtained since the corresponding linear program does not have any feasible solutions, and finally iv) There are cases in which error bounds exist, but a comparison result cannot be obtained.

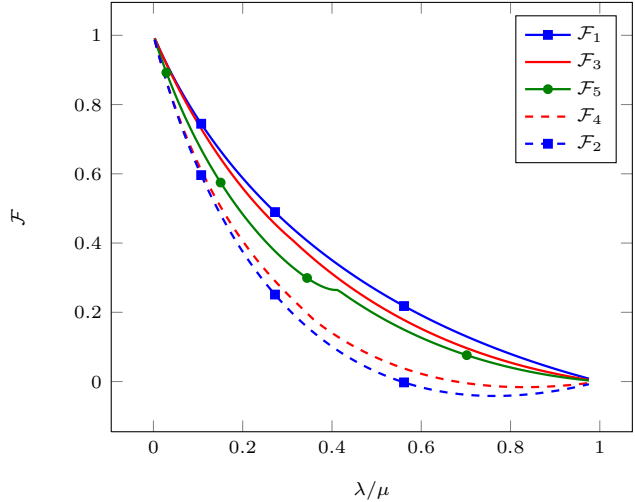
\begin{figure}
\centering
\begin{tikzpicture}
\begin{axis}[
  xlabel=$\lambda/\mu$,ylabel=$\FF$, 
  font=\scriptsize,
  legend style={
        cells={anchor=west},
        legend pos=north east,
       font=\scriptsize,
    }
]
\addplot[
  line width=.3mm,color=blue,
  mark=square*,mark repeat=32,mark phase=32,mark size=.5mm,mark options={solid}
  ]
table[
  header=false,x index=0,y index=2,
  ]
{matlab_figures/nc_empty.csv};
\addlegendentry{$\FF_1$};
\addplot[
  line width=.3mm,color=red,
  mark=none
  ]
table[
  header=false,x index=0,y index=4,
  ]
{matlab_figures/nc_empty.csv};
\addlegendentry{$\FF_3$};
\addplot[
  line width=.3mm,color=green!50!black,
  mark=*,mark repeat=32,mark phase=10,mark size=.5mm,mark options={solid}
  ]
table[
  header=false,x index=0,y index=8,
  ]
{matlab_figures/nc_empty.csv};
\addlegendentry{$\FF_5$};
\addplot[
  mark=none,line width=.3mm,color=red,dashed
  ]
table[
  header=false,x index=0,y index=3,
  ]
{matlab_figures/nc_empty.csv};
\addlegendentry{$\FF_4$};
\addplot[
  line width=.3mm,color=blue,dashed,
  mark=square*,mark repeat=32,mark phase=32,mark size=.5mm,mark options={solid}
  ]
table[
  header=false,x index=0,y index=1,
  ]
{matlab_figures/nc_empty.csv};
\addlegendentry{$\FF_2$};
\end{axis}
\end{tikzpicture}
\caption{Probability that the symmetric random walk with joint departures is empty, \ie $F(n)=\II{n=0}$. Upper bounds in solid lines, lower bounds in dashed lines.($\mu^*=0.4\mu$)}
\label{fig:ncempty}
\end{figure}

We first provide results for the performance measure that was considered in Section~\ref{sec:motivation}, the probability that the system is empty, \ie $F(n)=\II{n=0}$. Moreover we consider the perturbed random walk with $\bar\mu_1=\bar\mu_2=\mu/2$, again as in Section~\ref{sec:motivation}. Let $\FF_1$ and $\FF_2$ denote the values of the upper and lower bound, respectively, as given in Proposition~\ref{prop:prelimbounds} in Section~\ref{sec:motivation}. Moreover, let $\FF_3$ and $\FF_4$ denote the optimal values of Problems~\ref{pr:stepb} and~\ref{pr:stepblow}, respectively.
Finally, let $\FF_5$ denote the optimal value of Problem~\ref{pr:comparisona}, \ie the comparison result. The values of these bounds are illustrated in Figure~\ref{fig:ncempty} as a function of the system load $\lambda/\mu$. Recall from above that $\mu=1-2\lambda$. 

In Figure~\ref{fig:ncempty} we observe that the optimized bounds $\FF_3$ and $\FF_4$ are tighter than the bounds $\FF_1$ and $\FF_2$ that were manually derived in Section~\ref{sec:motivation}. Next, note that the comparison result of Problem~\ref{pr:comparisona} provides an even better upper bound. Observe, moreover, that the value of $\FF_5$ as given by Problem~\ref{pr:comparisona} consists of two piecewise smooth parts. The reason is the following. A more careful inspection of the optimal values of $\bar F$, $A_1$, $A_2$, $B_1$ and $B_2$ for Problem~\ref{pr:comparisona} reveils that the structure of the optimal $\bar F$ can have two forms depending on the value of $\lambda/\mu$. The final remark with respect to Figure~\ref{fig:ncempty} is that Problem~\ref{pr:stepblow} does not always provide a meaningful lower bound, \ie in our case it provides for some values of $\lambda/\mu$ a negative lower bound on a probability.

\begin{figure}
\centering
\begin{tikzpicture}
\begin{axis}[
  xlabel=$\lambda/\mu$,ylabel=$\FF$, 
  ymin=0,ymax=4,
  font=\scriptsize,
  legend style={
        cells={anchor=west},
        legend pos=north west,
       font=\scriptsize,
    }
]
\addplot[
  line width=.3mm,color=blue,
mark=square*,mark repeat=32,mark phase=32,mark size=.5mm,mark options={solid}
  ]
table[
  header=false,x index=0,y index=2,
  ]
{matlab_figures/nc_size.csv};
\addlegendentry{$\FF_1$};
\addplot[
  mark=none,line width=.3mm,color=red,
  mark=none
  ]
table[
  header=false,x index=0,y index=4,
  ]
{matlab_figures/nc_size.csv};
\addlegendentry{$\FF_3$};
\addplot[
  line width=.3mm,color=red,dashed,
  ]
table[
  header=false,x index=0,y index=3,
  ]
{matlab_figures/nc_size.csv};
\addlegendentry{$\FF_4$};
\addplot[
  line width=.3mm,color=blue, dashed,
  mark=square*,mark repeat=32,mark phase=32,mark size=.5mm,mark options={solid}
  ]
table[
  header=false,x index=0,y index=1,
  ]
{matlab_figures/nc_size.csv};
\addlegendentry{$\FF_2$};
\end{axis}
\end{tikzpicture}
\caption{Marginal first moment, \ie $F(n)=n_1$,  of the symmetric random walk with joint departures. Upper bounds in solid lines, lower bounds in dashed lines.($\mu^*=0.4\mu$)}
\label{fig:ncsize}
\end{figure}
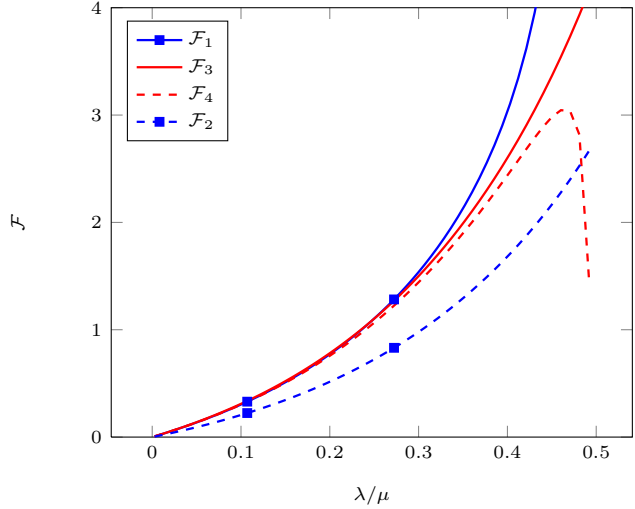

Next, we consider the performance measure $F(n)=n_1$, \ie $\FF$ is the first marginal moment in dimension $1$. Since we consider a completely symmetrical system this is equal to the first marginal moment in dimension $2$. Hence, we will simply refer to $\FF$ as the first marginal moment. In Figure~\ref{fig:ncsize} we have depicted various bounds on $\FF$ as a function of $\lambda/\mu$ for the case that $\mu^*=0.4\mu$. More precisely, the bounds in Figure~\ref{fig:ncsize} correspond to two different perturbed system. The first perturbed system that we consider is $\bar\mu_1=\mu-\mu^*$ and $\bar\mu_2=\mu^*$, leading to bounds $\FF_1$ and $\FF_2$. The second perturbed system has $\bar\mu_1=\mu^*$ and $\bar\mu_2=\mu-\mu^*$, leading to bounds $\FF_3$ and $\FF_4$. Upper bounds $\FF_1$ and $\FF_3$ are given by Problem~\ref{pr:comparisona}, lower bounds $\FF_2$ and $\FF_4$ by Problem~\ref{pr:stepblow}. The first thing to observe from Figure~\ref{fig:ncsize} is that the perturbed system that is considered can have significant impact on the tightness of the bounds that are derived. The second thing to note is that for larger values of $\lambda/\mu$ the bounds diverge. Inspection of the relevant linear programs reveals that, for $\lambda/\mu>0.5$ the Problems~\ref{pr:stepb}--\ref{pr:stepblow} are infeasible. It was shown in~\cite{goseling13peva} that the symmetric random walk with joint departures is ergodic as long as $\lambda/\mu<1$ and $\mu^*>0$. Therefore, non-ergodicity of one of the random walks at hand is not the reason for infeasibility of the linear programs.  
 A more careful examination reveals that in this case the bias terms cannot be bounded by componentwise linear functions. 

In addition to results as a function of $\lambda/\mu$, we provide in Figure~\ref{fig:ncsizeB} the behavior of the bounds as a function of $\mu^*$ for a fixed value of $\lambda/\mu$. Upper bounds $\FF_1$ and $\FF_2$ are given by Problems~\ref{pr:stepb} and~\ref{pr:comparisona} for the case that $\bar\mu_1=\mu-\mu^*$ and $\bar\mu_1=\mu^*$, respectively. Lower bounds $\FF_3$ and $\FF_4$ are given by problem~\ref{pr:stepblow} for the case that $\bar\mu_1=\mu^*$ and $\bar\mu_1=\mu-\mu^*$, respectively. It is clearly reflected in the figure that larger perturbations of the transition rates lead to looser bounds. Note also, that for $\mu^*=1/2$ the original random walk has a product form distribution and the uppper and lower bounds coincide. The results in the figure demonstrate that the comparison result might lead to useful bounds in cases that the error bound result does not. Indeed for $\mu^*/\mu<0.5$, $\FF_1$ does not provide much insight, but $\FF_2$ does. In relation to Figure~\ref{fig:ncsizeB} finally note that a lower bound following from a comparison result does not exist. More precisely, the lower bound equivalent of Problem~\ref{pr:comparisona} is infeasible.

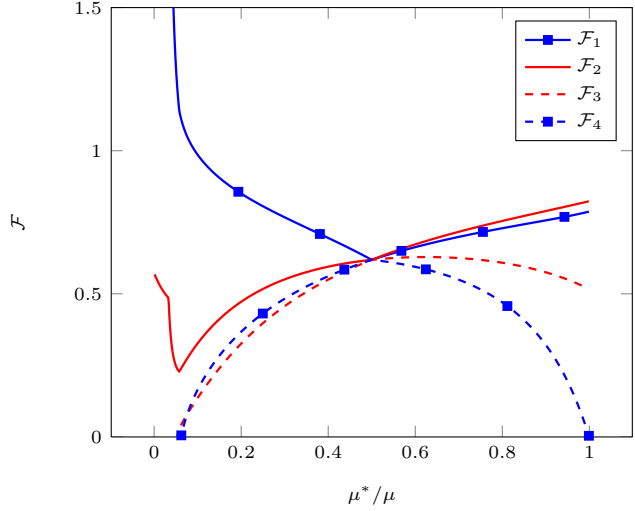
\begin{figure}
\centering
\begin{tikzpicture}
\begin{axis}[
  xlabel=$\mu^*/\mu$,ylabel=$\FF$, 
  ymin=0,ymax=1.5,
  font=\scriptsize,
  legend style={
        cells={anchor=west},
        legend pos=north east,
       font=\scriptsize,
    }
]
\addplot[
  line width=.3mm,color=blue,
mark=square*,mark repeat=150,mark phase=120,mark size=.5mm,mark options={solid}
  ]
table[
  header=false,x index=0,y index=2,
  ]
{matlab_figures/nc_sweep.csv};
\addlegendentry{$\FF_1$};
\addplot[
  mark=none,line width=.3mm,color=red,
  mark=none
  ]
table[
  header=false,x index=0,y index=8,
  ]
{matlab_figures/nc_sweep.csv};
\addlegendentry{$\FF_2$};
\addplot[
  line width=.3mm,color=red,dashed,
  ]
table[
  header=false,x index=0,y index=3,
  ]
{matlab_figures/nc_sweep.csv};
\addlegendentry{$\FF_3$};
\addplot[
  line width=.3mm,color=blue, dashed,
  mark=square*,mark repeat=150,mark phase=0,mark size=.5mm,mark options={solid}
  ]
table[
  header=false,x index=0,y index=1,
  ]
{matlab_figures/nc_sweep.csv};
\addlegendentry{$\FF_4$};
\end{axis}
\end{tikzpicture}
\caption{Marginal first moment, \ie $F(n)=n_1$, of the symmetric random walk with joint departures. Upper bounds in solid lines, lower bounds in dashed lines.($\lambda=0.1$)}
\label{fig:ncsizeB}
\end{figure}

\begin{figure}
\centering
\beginpgfgraphicnamed{pgfx}
\begin{tikzpicture}[scale=.7]
\tikzstyle{axes}=[very thin]
\tikzstyle{trans}=[very thick,-latex]
\tikzstyle{intloop}=[->,to path={
    .. controls +(30:3) and +(-30:3) .. (\tikztotarget) \tikztonodes}]
\tikzstyle{hloop}=[-latex,to path={
    .. controls +(-60:1.5) and +(-120:1.5) .. (\tikztotarget) \tikztonodes}]
\tikzstyle{vloop}=[->,to path={
    .. controls +(-150:1.5) and +(-210:1.5) .. (\tikztotarget) \tikztonodes}]
\tikzstyle{oloop}=[->,to path={
    .. controls +(255:1.5) and +(195:1.5) .. (\tikztotarget) \tikztonodes}]
\draw[axes] (0,0)  -- node[at end, below] {$\scriptstyle \rightarrow n(1)$} (6.5,0);
\draw[axes] (0,0) -- node[at end, left] {$\scriptstyle {\uparrow} {n(2)}$} (0,6.5);
\draw[trans] (0,0) to node[below,at end] {\fm{\lambda_1}} +(1,0);
\draw[trans] (0,0) to node[left, at end] {\fm{\lambda_2}} +(0,1);
\draw[trans] (4,0) to node[at end, below] {\fm{\mu_h}} +(-1,0);
\draw[trans] (4,0) to node[at end, below] {\fm{\lambda_1}} +(1,0);
\draw[trans] (4,0) to node[at end, anchor = south]  {\fm{\lambda_2}} +(0,1);
\draw[trans] (0,4) to node[left, at end] {\fm{\mu_v}} +(0,-1);
\draw[trans] (0,4) to node[at end, anchor = west] {\fm{\lambda_1}} +(1,0);
\draw[trans] (0,4) to node[left, at end] {\fm{\lambda_2}} +(0,1);
\draw[trans] (4,4) to node[at end, anchor = west] {\fm{\lambda_1}} +(1,0);
\draw[trans] (4,4) to node[at end, anchor = south] {\fm{\lambda_2}} +(0,1);
\draw[trans] (4,4) to node[at end, anchor = east] {\fm{\mu_1}} +(-1,0);
\draw[trans] (4,4) to node[at end, anchor = north] {\fm{\mu_2}} +(0,-1);
\draw[very thick,-latex] (0,0) to[oloop] node[below] {\fm{\mu_1+\mu_2}} (0,0);
\draw[very thick,-latex] (4,0) to[hloop] node[below=.4mm] {\fm{\mu_1+\mu_2-\mu_h}} (4,0);
\draw[very thick,-latex] (0,4) to[vloop] node[left] {\fm{\mu_1+\mu_2-\mu_v}} (0,4);
\end{tikzpicture}
\endpgfgraphicnamed
\caption{Random walk with coupled processors.}
\label{fig:coupled}
\end{figure}
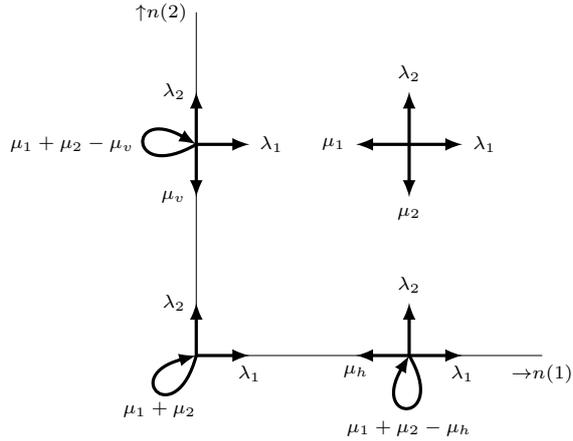

\begin{figure}
\centering
\begin{tikzpicture}
\begin{semilogyaxis}[
  xlabel=$\mu/\rho$,ylabel=$\FF$, 
  ymin=0, 
  font=\scriptsize,
  legend style={
        cells={anchor=west},
        legend pos=north west,
       font=\scriptsize,
    }
]
\addplot[
  mark=none,line width=.3mm,color=blue,
  mark=square*,mark repeat=40,mark phase=25,mark size=.5mm,mark options={solid}
  ]
table[
  header=false,x index=0,y index=8,
  ]
{matlab_figures/coupled_size.csv};
\addlegendentry{$\FF_1$};
\addplot[
  line width=.3mm,color=red,
  ]
table[
  header=false,x index=0,y index=2,
  ]
{matlab_figures/coupled_size.csv};
\addlegendentry{$\FF_2$};
\addplot[
  line width=.3mm,color=red, dashed,
  mark=none
  ]
table[
  header=false,x index=0,y index=1,
  ]
{matlab_figures/coupled_size.csv};
\addlegendentry{$\FF_3$};
\addplot[
  line width=.3mm,color=blue,dashed,
  mark=square*,mark repeat=40,mark phase=10,mark size=.5mm,mark options={solid}
  ]
table[
  header=false,x index=0,y index=7,
  ]
{matlab_figures/coupled_size.csv};
\addlegendentry{$\FF_4$};
\end{semilogyaxis}
\end{tikzpicture}
\caption{Marginal first moment, \ie $F(n)=n_1$, of the symmetric random walk with coupled processors. Upper bounds in solid lines, lower bounds in dashed lines.}\label{fig:coupledsize}
\end{figure}
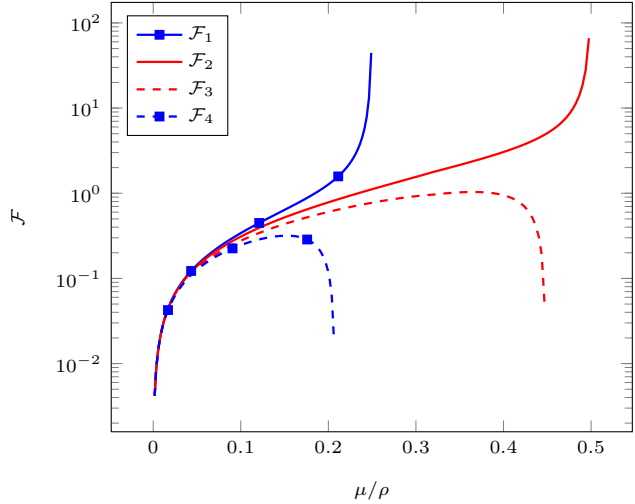

\subsection{Coupled processors}
The next example that we consider is the random walk with coupled processors~\cite{fayolle1979two}. This model arises from a queueing network with two queues, each with a single server. The coupling of the processors is such that in the interior of the state space the processors operate at rates $\mu_1$ and $\mu_2$ respectively. If one of the processors is idle, the other processor adjusts its rates. The transition probabilities are as follows:
\begin{equation*}
\begin{IEEEeqnarraybox}{rCl"rCl"rCl"rCl}
p_{1,e_1}&=&\lambda_1, & p_{1,e_2}&=&\lambda_2, & p_{1,\mea}&=&\mu_h, & p_{1,\o}&=&\mu_1+\mu_2-\mu_h,\\
p_{2,e_1}&=&\lambda_1, & p_{2,e_2}&=&\lambda_2, & p_{2,\meb}&=&\mu_v, & p_{2,\o}&=&\mu_1+\mu_2-\mu_v,\\ 
p_{3,e_1}&=&\lambda_1, & p_{3,e_2}&=&\lambda_2, & p_{3,0}&=&\mu_1+\mu_2,            & \\
p_{4,e_1}&=&\lambda_1, & p_{4,e_2}&=&\lambda_2, & p_{4,\mea}&=&\mu_1,    &  p_{4,\meb}&=&\mu_2,
\end{IEEEeqnarraybox}
\end{equation*}
where $\lambda_1+\lambda_2+\mu_1+\mu_2=1$. The transition diagram is depicted in Figure~\ref{fig:coupled}.

It is known~\cite{fayolle1979two} that this random walk has product-form stationary distribution if and only if $\mu_h+\mu_v=\mu_1+\mu_2$. In that case the parameters $\ra$, $\rb$, of the geometric distribution can be found as the unique solution of $\ra$, $\rb$ in $[0,1]^2$ of the following system of equations:
\begin{align*}
\ra^{-1}\lambda_1 + \ra\mu_h + \rb\mu_2 &= \lambda_1+\lambda_2+\mu_h, \\
\rb^{-1}\lambda_2 + \ra\mu_1 + \rb\mu_v &= \lambda_1+\lambda_2+\mu_h, \\
\ra\mu_h + \rb\mu_v &= \lambda_1+\lambda_2, \\
\ra^{-1}\lambda_1 + \rb^{-1}\lambda_2 + \ra\mu_1 + \rb\mu_2 &= 1,
\end{align*}
that represent the balance equations in each of the components of the state space.

Even though an expression for the generating function of $\pi(n)$ is given in~\cite{fayolle1979two} also for the case that $\mu_h+\mu_v\neq\mu_1+\mu_2$, it is not trivial to use the results from~\cite{fayolle1979two} to evaluate various performance measures. Therefore, the bounds that are given in this paper provide a convenient means of evaluating the performance of a random walk with coupled processors. 

In Figure~\ref{fig:coupledsize} we have presented numerical results for the case that $\lambda_1=\lambda_2=\lambda$, $\mu_1=\mu_2=\mu$, $\mu_h=\mu_v=\mu^*$. The figure presents bounds on the first marginal moment, \ie $F(n)=n_1$, as a function of the system load $\lambda/\mu$ for $\mu^*=0.4\mu$. The perturbed system that we use for all bounds has transition probabilities $\bar\mu_h=\mu^*$ and $\bar\mu_v=2\mu-\mu^*$. The upper bound $\FF_2$ and lower bound $\FF_3$ result from Problems~\ref{pr:stepb} and~\ref{pr:stepblow}, respectively. In addition we have presented upper bound $\FF_1$ and lower bound $\FF_4$ that arise from putting the additional constraints to Problems~\ref{pr:stepb} and~\ref{pr:stepblow}, respectively. These constraints require the functions $A_u$, $B_u$, $u\in\N$, to be linear. Note that this is a stronger constraint than the componentwise linear condition that is imposed in Problems~\ref{pr:stepb} and~\ref{pr:stepblow}. It is clearly reflected in Figure~\ref{fig:coupledsize} that bounding the bias terms with componentwise linear functions significantly improves performance over bounding with (completely) linear functions.

%
%
%
\section{Discussion} \label{sec:disc}
In this paper we have presented a linear programming approach to establishing error bounds for random walks in the quarter-plane. Thereby we obtain the first generic method of establishing such bounds for a large class of processes. The current work can be extended in a multitude of directions, some of which include extensions to higher dimensional random walks and random walks on bounded state spaces. Another extension of interest is to include an optimization over the perturbed system into the optimization problem that used to establish the error bound.

%
%
%
\section*{Acknowledgments}
The authors wish to thank Nico van Dijk for useful discussions. This work is supported by the Netherlands Organization for Scientific Research (NWO), grant $612.001.107$.

%
%
%

\bibliographystyle{amsplain}
\bibliography{IEEEabrv,GoselingBoucherieVanOmmeren14}

\end{document}